\newtheorem{theorem}{Theorem}[section]
\newtheorem{corollary}[theorem]{Corollary}
\newtheorem{lemma}[theorem]{Lemma}
\newtheorem{proposition}[theorem]{Proposition}
\theoremstyle{definition}
\newtheorem{definition}[theorem]{Definition}
\newtheorem{remark}[theorem]{Remark}
\newtheorem{example}[theorem]{Example}
\numberwithin{equation}{section} 
\newcommand {\Q}{{\mathbb{Q}}}
\newcommand {\C}{{\mathbb{C}}}
\newcommand {\PP}{{\mathbb{P}}}
\newcommand {\R}{{\mathbb{R}}}
\newcommand {\Z}{{\mathbb{Z}}} 
\newcommand {\K}{{\mathbb{K}}} 
\newcommand{\rmh}{\mathrm {h}}
\newcommand{\rmH}{\mathrm {H}}
\newcommand{\rmM}{\mathrm {M}}
\newcommand{\rme}{\mathrm {e}}
\newcommand{\Bin}{\mathrm {Bin}}
\newcommand{\calA}{\mathcal {A}}
\newcommand{\calE}{\mathcal {E}} 
\newcommand{\calF}{\mathcal {F}}
\newcommand{\calG}{\mathcal {G}}
\newcommand{\calR}{\mathcal {R}}
\newcommand{\calX}{\mathcal {X}}
\newcommand{\GL}{\mathrm {GL}}
\def\boxit#1#2{\setbox1=\hbox{\kern#1{#2}\kern#1}%
\dimen1=\ht1 \advance\dimen1 by #1 \dimen2=\dp1 \advance\dimen2 by #1
\setbox1=\hbox{\vrule height\dimen1 depth\dimen2\box1\vrule}%
\setbox1=\vbox{\hrule\box1\hrule}%
\advance\dimen1 by .4pt \ht1=\dimen1
\advance\dimen2 by .4pt \dp1=\dimen2 \box1\relax}
\begin{document}

%%%%% 

\baselineskip=14pt

\author{\'Etienne Fouvry\\
Univ. Paris--Saclay, CNRS \\
Laboratoire de Math\' ematiques d'Orsay \\
91405 Orsay, France \\
E-mail: Etienne.Fouvry@universite-paris-saclay.fr
\and 
Michel Waldschmidt\\
Sorbonne Universit\' e \\
CNRS, IMJ--PRG \\
75005 Paris, France\\
E-mail: michel.waldschmidt@imj-prg.fr}

\renewcommand{\thefootnote}{\arabic{footnote}}
\setcounter{footnote}{0}

\begin{center}
\Large\bf 
{\bf Number of integers represented by
 families of 
 \\
binary forms III: fewnomials
 } 
\\

\bigskip

{\sc \small \'Etienne Fouvry \&\ Michel Waldschmidt}

\end{center}
 
 \bigskip
 
\hfill 
{\em 
To J\'anos Pintz for his Seventy Fifth Birthday.}

% \tableofcontents
 
 \bigskip
   
\noindent
{\bf Abstract.} In a series of papers we investigated the following question: 
given a family $\calF$ of binary forms having nonzero discriminant and integer coefficients, for each $d\geqslant 3$, we estimate the number of integers $m$ with $|m|\leqslant N$ which are represented by an element in $\calF$ of degree $\geqslant d$. Under suitable assumptions, asymptotically as $N\to\infty$, the main term in the estimate is given by the forms in $\calF$ having degree $d$ (if any), while the forms of degree $>d$ contribute only to the error term. 
The present text is devoted to fewnomials
\[
a_0X^{kr}+a_1X^{k(r-1)}Y^k+\cdots +a_{r-1}X^kY^{k(r-1)}+a_rY^{kr}
\]
with fixed $r\geqslant 1$ and varying $k,a_0,a_1,\dots,a_r$.

 \bigskip

\noindent {\em Mathematics Subject Classification}: Primary 11E76; Secondary 11D45 11D85.

\medskip
 \noindent {\em Key words and phrases}: Representation of integers by fewnomials, Families of Diophantine equations,
Linear forms in logarithms.

%%%%%%%%%%%%%%%%%%%%%%%%%%%%%%%%%%%%%%%%%%%%%%%%%%%%%%%%%%%%%% 
 \section{Introduction} %\label{S:Intro} 
 This is the fourth text of a series of papers devoted to the study of the set of integers which are represented by some forms belonging to a family. In the first one \cite{FW0} we investigated the case of the family of cyclotomic forms. In the second and third texts \cite{FW1, FW2} we considered in particular families of binomial binary forms $aX^d+bY^d$, with varying $a,b,d$, such that $a$ and $b$ are of any sign and $d \geqslant 3$. We now are concerned with fewnomials
\[
a_0X^{kr}+a_1X^{k(r-1)}Y^k+\cdots +a_{r-1}X^kY^{k(r-1)}+a_rY^{kr}
\]
with fixed $r\geqslant 1$ and varying $k,a_0,a_1,\dots,a_r$. Let $\mathcal F$ be some (suitably defined) family of such fewnomials. When $r=1$ we recognize a family of binomial forms. 
Like in \cite{FW0}, \cite{FW1} and \cite{FW2}, we are interested by the set of integers represented by some form of the family $\mathcal F$. Our method rests on a lower bound for linear forms in logarithms (see Proposition \ref{Prop:9.22} below), on a study of the group of automorphisms of the forms in $\mathcal F$ 
and on the non existence of isomorphism exchanging two distinct forms of $\mathcal F$
(see Propositions \ref{central724} and \ref{central}).

The results of the present paper apply to families of trinomial binary forms 
\[
aX^d+cX^eY^{d-e} +bY^d,
\] 
with varying rational integers $a,b,c,d,e$, $1\leqslant e<d$, $abc\not=0$ when the quotients $e/d$ belongs to a finite set depending on the family. We will pursue the study of more general families of trinomial binary forms in a forthcoming paper \cite{FW6+}. We will see that the famous {\it Conjecture abc } has a dramatic impact on the qualities of the results. We will consider families of definite positive forms in another one \cite {FW6}. 

In order to present our results, we give a list of definitions and notations.

 %%%%%%%%%%%%%%%%%%%%%
\subsection{About general binary forms}
%%%%%%%%%%%%%%%%%%%%%%%%%%%%%%%%%%%%%%%%%%%%%%%%%%%%%%%%%%%%%% 

 Let $d\geqslant 3$ be an integer. For $\K = \Z,\, \Q, \R$ or $\C$, let $\Bin  (d, \K)$ be the set of binary forms
 $F= F(X,Y)$ with degree $d$, with coefficients in $\K$ and with discriminant different from zero. If $F$ belongs to 
 $\Bin (d, \K)$ and if \begin{equation}\label{abcd}
 \gamma= \begin{pmatrix} u_1& u_2 \\ u_3 &u_4\end{pmatrix}
\end{equation}
 belongs to ${\GL}(2, \K)$, we denote by
 $F\circ \gamma$ the binary form defined by 
 $(F\circ \gamma) (X, Y) = F(u_1X+u_2Y, u_3X+u_4Y),
 $
 after the associated linear change of variables. The form $F\circ \gamma$ belongs to $\Bin  (d, \K)$. 
 Two forms $F$ and $G$ in $\Bin  (d, \K)$ are called {\em $\K$--isomorphic} if there exists $\gamma$ in ${\GL}(2, \K)$ such that $F\circ \gamma=G$.
 
 If $\gamma$ is such that $F\circ \gamma =F$, we say that $\gamma$ is an {\em automorphism of $F$}. The set of these automorphisms is a group denoted by ${\rm Aut}(F, \K)$.
 We have $-{\rm Id}\in {\rm Aut}(F, \K)$ if and only if $d$ is even.
 
 When $F$ belongs to ${\rm Bin } (d, \Z)$ we denote by $C_F$ the constant 
 \begin{equation}\label{defCF} C_F:=A_FW_F,
 \end{equation}
 attached to $F$. It is defined and thoroughly studied in \cite[Theorem 1.2]{SX}: according to \cite[Theorem 1.1]{SX}, the number of $m\in\Z$ in the interval $[-N,N]$ which are represented by $F$ is equal to 
 \begin{equation}\label{Equation:CFN2/d}
 C_FN^{2/d}+O_F (N^{(2/d) -\kappa_d}),
 \end{equation}
 where $\kappa_d >0$ is an effective constant only depending on $d$, uniformly for $N \geqslant 1$. The constant $A_F$ is the area of the fundamental domain attached to $F$: 
 \begin{equation}\label{defAF}
 A_F:=\iint_{ 
 \vert F(x,y) \vert \leqslant 1} dx\, dy,\end{equation}
 and $W_F$ is a positive rational number the delicate definition of which is based on the denominators of the entries of the matrices in 
 ${\rm Aut}(F, \Q)$ (see \cite[Theorem 1.2]{SX}). For the purpose of our present work, we will only retain the following values of $W_F$ 
 \begin{equation}\label{valuesofWF}
 W_F=
 \begin{cases}
 1 & \text{ if } {\rm Aut}(F, \Q) =\{ {\rm Id}\}, \\
 1/2 & \text{ if } {\rm Aut}(F, \Q) = \{ \pm {\rm Id}\},\\
 1/4 & \text{ if } {\rm Aut}(F, \Q) = {\mathbf D}_2,
 \end{cases}
 \end{equation}
 where ${\mathbf D_2}\subset{\GL}(2, \Z)$ is the group with four elements
 \[
 {\mathbf D_2} =\left\{ \pm {\rm Id},\, \pm \begin{pmatrix}1 & 0\\ 0 & -1 \end{pmatrix}\right\}.
 \]
 Motivated by the example of forms \eqref{balancedtrinomial} with even $k$ (see below), we say that a binary form $F(X,Y)$ is a {\em binary form with squared arguments} when there exists a binary form $H$ such that $F(X,Y) =H(X^2, Y^2)$. Necessarily $\deg F $ is even and ${\rm Aut} (F, \Q)$ contains ${\mathbf D}_2$.
 
 The following definitions concerns family of binary forms containing essentially distinct forms. 
 \begin{definition} \label{definition:dilation homograph free}
 Let $\K$ as above and let $\mathcal E$ be a set of binary forms of any degree $d\geqslant 3$ with coefficients in $\K$. 
\begin{enumerate}
\item We say that $\mathcal E$ is {\em $\K$--dilation--free}, if for any $F$, $G$ in $\mathcal E$ and any $u$ and $v$ in $\K^\times$, 
the condition $F(uX,vY)=G(X,Y)$ implies $F=G$. 
 
\item We say that $\mathcal E$ is {\em $\K$--homography--free}, if the following 
 condition holds:
For any distinct forms $F$ and $G$ in $\mathcal E$ we have the equality
 \[
 \{ \gamma \in {\GL}(2, \K) : F= G\circ \gamma \} =\emptyset.
 \]
\item A form $F \in \mathcal E$ is called {\em $\K$--rigid}
 if we have the equality 
 \begin{equation}\label{299}
{\rm Aut}(F, \K)
 = 
 \begin{cases}\{ \lambda\, {\rm Id} : \lambda \in \K,\, \lambda^{\deg F} =1\} \text{ if } F \text{ is not a binary form}\\
 \hfill\text{ with squared arguments,}
 \\
 \\
\displaystyle {\bigcup_{\lambda \in \K,\quad \lambda^{\deg F} =1}\lambda\cdot {\mathbf D}_2}\text{ otherwise.}\\
 \end{cases}
\end{equation}
 \end{enumerate} 
\end{definition} 
 
 In section \S\ref{SS:HomographyFreeSets}, we give examples of sets $\mathcal E$ which are $\K$--homography--free. 

A set which is $\K$--homography--free is also $\K$--dilation--free. 
 If $\mathcal E$ is a set which is $\K$--homography--free (resp. $\K$--dilation--free), so is any subset of $\mathcal E$.

From \eqref{299} it follows that if $F\in \Bin  (d, \Q)$ is a $\Q$--rigid form, we then have 
 \[
 {\rm Aut}(F, \Q)=
\begin{cases}
\{\mathrm{Id}\} &\text{
if $d$ is odd,}
\\
\{\pm \mathrm{Id}\} &\text{
if $d$ is even}
\end{cases}\text{ if $F$ is not a binary form with squared arguments }
 \] 
 and ${\rm Aut}(F, \Q)= \mathbf D_2$ otherwise. 
Therefore, for a $\Q$--rigid form $F$, we have 
 \begin{equation}\label{=1/2ou1/4}
 W_F =
 \begin{cases}
1/(2,d)
& \text{ if } F \text{ is not a binary form with squared arguments}
 \\
 1/4
 & \text{ otherwise }
\\ 
\end{cases}
\end{equation}
 by \eqref{valuesofWF}.

 %%%%%%%%%%%%%%%%%%%%%%%%%%%%%%
 \subsection{About family of binary forms.}
 
 \begin{definition} \label{defFamily}
 Let $\K = \Z,\, \Q, \R$ or $\C$. Let $\mathcal F$ be a set of binary forms. We say that $\mathcal F$ is a {\it $\K$--family of binary forms} 
 if the two following conditions hold \begin{itemize}
 \item $\mathcal F \subset \bigcup_{d\geqslant 3} \Bin (d, \K)$ 
 \item for every $d\geqslant 3$, the set $\mathcal F \cap {\rm Bin }( d, \K) $ is finite.
 \end{itemize}
 \end{definition}
 For $d\geqslant 3$, set 
 \begin{equation}\label{defFd}\mathcal F_d:= \mathcal F \cap {\rm Bin }( d, \K).
 \end{equation} When the family $\calF$ is given and when the integer $d$ is $\geqslant 3$, 
the integer $d^\dag$ is defined by the formula
\[
d^\dag :=
\begin{cases}
\inf\{ d' : d' >d \text{ such that } \calF_{d'} \ne \emptyset\}
&\hbox{if there exists $d'>d$ such that $\calF_{d'} \ne \emptyset$,}
\\
\infty
& \hbox{if $\calF_{d'} = \emptyset$ for all $d'>d$.}
\end{cases}
\] 
When $\K= \Z$ and when $\mathcal F$ is fixed, we are interested in describing the value set of $\mathcal F$ defined as the union of all the images $F (\Z^2)$ for $F\in\mathcal F$. So 
we introduce the two sets 
\begin{multline} \label{defCalG}
\calG_{\geqslant d}(m)=
\Big\{(x,y,F)\;
 \mid
m=F(x,y)\; \hbox{ with } 
 F\in \mathcal F, \deg F \geqslant d \\ (x,y)\in\Z^2 \; \hbox{ and } 
 \max\{|x|,|y|\}\geqslant 2 \Big\}
 \end{multline}
 and 
\[
\calR_{\geqslant d}=\left\{m\in\Z\; \mid\; \calG_{\geqslant d}(m)\not=\emptyset \right\}.
\]
The assumption $\max\{|x|,|y|\}\geqslant 2$ is natural: the coefficient $a_0=F(1,0)$ of $F$ is likely to take infinity many values $m$, some of which may be repeated infinitely often 
(see Remark 1.2 in \cite{FW2}), a situation which would yield for the modified $\calG_{\geqslant d}(m)$ an infinite set.

 For $N$ a positive integer, we introduce
 \begin{equation}\label{Equation:RdN}
 \calR_{\geqslant d}(N)=\calR_{\geqslant d}\cap[-N,N].
 \end{equation}
 
%%%%%%%%%%%%%%%%%%%%%
\subsection{About binary fewnomials} \label{S:Fewnomial}

We firstly define a family of binary fewnomials.
\begin{definition}\label{fewnomial} Let $r\geqslant 1$ be an integer. For every $k\geqslant 3/r$ let $\mathcal E_k$ be a finite subset of $\Z^{r+1}$
such that, for every $\boldsymbol a =(a_0, \dots, a_r) \in \mathcal E_k$, one has 
\begin{enumerate}
\item $a_0 a_r\not= 0$,
\item the discriminant of the polynomial $a_0T^r+ \cdots +a_r$, is different from zero.
\end{enumerate} For every $k\geqslant 3/r$ and 
for every $\boldsymbol a\in \mathcal E_k$, let $F=F_{k,\boldsymbol a}(X,Y)$ be the binary form
\begin{equation}\label{278bis}
F_{k, \boldsymbol a} (X,Y)= a_0 X^{kr} +a_1 X^{k(r-1)} Y^k +\cdots +a_{r-1}X^k Y^{k(r-1)} + a_r Y^{kr}.
\end{equation}
Then the set $\mathcal F= \mathcal F_{\boldsymbol{\mathcal D}}$ defined by 
 \[
 \mathcal F_{\boldsymbol{\mathcal D}}:=
\left\{ F_{k, \boldsymbol a} : k\geqslant 3/r, \, \boldsymbol a \in \mathcal E_k \right\},
\]
is called the family of {\em binary fewnomials attached to the data } 
\begin{equation*} 
{\boldsymbol {\mathcal D}}=(r, (\mathcal E_k)).
\end{equation*}
\end{definition}
Let $\mathcal F$ as in Definition \ref{fewnomial}. Then the degree of every $F\in \mathcal F$ is divisible by $r$
and greater than $2$. The discriminant of $F$ is different from zero. We define the {\em height of $F$} by the formula.
\begin{equation}\label{Equation:calA}
 \calA(F):=\max\{\vert a_0\vert,\, \vert a_1\vert,\dots,\, \vert a_r\vert \}
\end{equation}
and the modified height 
\[
\calA^\star(F):=\max\{2,\calA(F)\}
\] 
which naturally appears in some formulas (for instance in Corollary \ref{Cor:H(xd,yd)}).

By the definition \eqref{defFd} we have the 
 decomposition
 \[
\mathcal F = \bigcup_{k\geqslant 3/r} \mathcal F_{kr}.
\]
The number of elements in $\mathcal F_d$
is less than
 \begin{equation}\label{Equation:New1}
 \max_{F\in \mathcal F_d}
\left(2\, \calA ^\star(F)+1\right)^{r+1}.
\end{equation}
If $F$ is given by \eqref{278bis},
we have the equality
 \[
F(X,Y) =Y^{kr}h\left( \left(X/Y
\right)^k\right),
\]
where $h(T)$ is the polynomial $a_0 T^r +\cdots + a_{r-1} T + a_r$. This point of view will be exploited in the proof of Corollary \ref{Cor:H(xd,yd)}.

%%%%%%%%%%%%%%%%%%%%%%%%%%%%
\subsection{Some examples}

Let $\mathcal F = \mathcal F_{\boldsymbol {\mathcal D}}$ be the family of binary fewnomials attached to the data $\boldsymbol {\mathcal D}$ as in Definition \ref{fewnomial}.
The number of monomials appearing in each form $F\in\calF_d$ is at most $ r+1$. In particular, when $r=1$, the forms $F\in\calF_d$ are binomial binary forms 
\begin{equation}\label{Equation:binomialForms}
aX^d+bY^d,
\end{equation}
(cf. \cite[Corollary 1.3]{SX}, \cite{FW2}), while if $r\geqslant 2$ and if there exists $s$ in the interval $1\leqslant s\leqslant r-1$ such that each ${\boldsymbol a} = (a_0,a_1,\dots,a_r)\in\calE_k$ satisfies $a_j=0$ for $j\not\in\{0,s,r\}$, then the forms $F\in\calF_d$ are {\em trinomial binary forms }
\[
aX^{kr}+cX^{ks}Y^{k(r-s)} +bY^{kr}.
\] 
For example with $r=2$ and $s=1$ the family that we are considering is the family of 
{\it balanced trinomial binary forms } 
\begin{equation}\label{balancedtrinomial}
aX^{2k}+cX^kY^k+bY^{2k}.
\end{equation}
These trinomial forms will be studied in \cite{FW6+}.

As in \cite{FW1}, \cite{FW2} we are interested in the asymptotic description of the set of integers which are represented by some form $F$ of the family of binary fewnomials $\mathcal F = \mathcal F_{\boldsymbol {\mathcal D}}$, with a fixed $r\geqslant 2$, in particular we study the counting function 
\[
\sharp \calR_{\geqslant d}(N),
\] 
associated to $\mathcal F_{\boldsymbol{ \mathcal D}}$ (see the Definition \eqref{Equation:RdN}), where as usual $\sharp E$ the number of elements of a finite set $E$.
Our results 
will require the constant $\vartheta_d\, (<2/d)$ which is defined in \cite[(2.1)]{FW1} by the formula:
 \begin{equation*} 
 \vartheta_d=
 \begin{cases}
 \displaystyle
 \frac {24\sqrt 3+73}{60\sqrt 3 +73}=\frac {2628\sqrt 3-1009}{5471}=0.6475\dots &\text{ {for} } d=3,\\
\displaystyle
 \frac {2\sqrt d +9}{4 d \sqrt d -6\sqrt d +9} & \text{ {for} } 4\leqslant d\leqslant 20,\\
 \displaystyle
 \frac 1{d-1} & \text{ {for} } d\geqslant 21.
 \end{cases}
 \end{equation*}

 %%%%%%%%%%%%%%%%%%%%%
\subsection{The main result} 

Recall the Definitions \ref{defFamily} for $d^\dag$ and \ref{fewnomial} for ${\boldsymbol{ \mathcal D}} $ and $\mathcal F_{\boldsymbol{ \mathcal D}}$, and Notations
\eqref{defCF} for $C_F$, \eqref{defAF} for $\calA(F)$, 
\eqref{defCalG} for $\calG_{\geqslant d}(m)$,
\eqref{Equation:RdN} for $ \calR_{\geqslant d}(N)$.

 If $F$ is a binary form we put $A^*_F :=A_F/2$ if $F$ is a form with squared arguments, 
 and $A^*_F :=A_F$ otherwise. 
The next result will be proved in \S \ref{Subsection:ProofPrincipal}.

 \begin{theorem} \label{Th:principal}
Let $\epsilon>0$ and let $r\geqslant 2$ be an integer. Let $\mathcal F= \mathcal F_{\boldsymbol{ \mathcal D}}$ be the family of binary fewnomials attached to the system 
of data ${\boldsymbol{ \mathcal D}} =(r, (\mathcal E_k))$. 
There exists a constant $\eta>0$ which depends only on $\epsilon$ and $r$ with the following property. 
Assume that there exists $d_0\geqslant 3$ such that, for all $d\geqslant d_0$,
\begin{equation}\label{Equation:majorationCalAeta}
\max_{F\in\calF_d}\calA^\star(F)\leqslant \exp(\eta d/\log d).
\end{equation}
Then 
 
(a) For all $m\in\Z\smallsetminus\{-1,0,1\}$ and all $d\geqslant 3$ which is a multiple of $r$, the set $\calG_{\geqslant d}(m)$ is finite. Moreover, for all $d\geqslant d_0$ which is a multiple of $r$ and all $\epsilon>0$, there exists a constant $c$ depending only on $r,d,\epsilon$ such that, for $|m|\geqslant 2$,
\[
\sharp \calG_{\geqslant d}(m)\leqslant c |m|^{(1/d)+\epsilon }.
\]

(b) 
Assume that $\mathcal F$ is a $\Q$--homography--free set. Then 
for all $d\geqslant 3$ which is a multiple of $r$, we have, for $N\to\infty$,
\begin{equation}\label{Equation:calRdN}
\sharp \calR_{\geqslant d}(N)=
\left(\sum_{F\in\calF_d} C_F\right) N^{2/d}+ O_{\epsilon,r,d} \Bigl(N^{\max\{\vartheta_{d}+\epsilon,2/d^\dag\}} \Bigr).
\end{equation}

(c) 
Assume that $\mathcal F$ is a $\Q$--homography--free set of $\Q$--rigid forms. Then 
for all $d\geqslant 3$ which is a multiple of $r$, we have, for $N\to\infty$,
\[
\sharp \calR_{\geqslant d}(N)= \frac 1{(d,2)}
\left(\sum_{F\in\calF_d} A^\star_F\right) N^{2/d}+ O_{\epsilon,r,d} \Bigl(N^{\max\{\vartheta_{d}+\epsilon,2/d^\dag\}} \Bigr).
\]
\end{theorem}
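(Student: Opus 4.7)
The plan is to deduce all three parts from the Stewart--Xiao asymptotic \eqref{Equation:CFN2/d} applied form by form, combined with the lower bound on linear forms in logarithms of Proposition \ref{Prop:9.22} and the automorphism/homography analysis of Propositions \ref{central724} and \ref{central}. The height hypothesis \eqref{Equation:majorationCalAeta} keeps the cardinality of each $\calF_d$ and the relevant constants under control.

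\textbf{Part (a).} Fix $m$ with $|m|\geqslant 2$. For $F=F_{k,\boldsymbol a}\in\calF$ with $kr\geqslant d$, use the factorisation $F(x,y)=y^{kr}h((x/y)^k)$, where $h(T)=a_0T^r+\cdots+a_r$ has nonzero discriminant. Normalising so that $|y|\geqslant 2$, the equation $F(x,y)=m$ bounds $|y|^{kr}$ in terms of $|m|$, which caps the admissible values of $k$ at $k\ll\log |m|$ and, via \eqref{Equation:New1}, caps the number of $\boldsymbol a\in\calE_k$. For each surviving pair $(k,\boldsymbol a)$, Proposition \ref{Prop:9.22} bounds the number of integral solutions of $F(x,y)=m$ by $O_{F,\epsilon}(|m|^{1/d+\epsilon})$. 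Summing the polynomially many contributions gives the announced estimate with a constant $c(r,d,\epsilon)$.

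\textbf{Part (b).} For each $F\in\calF_d$, Stewart--Xiao gives $\sharp\bigl(F(\Z^2)\cap[-N,N]\bigr)=C_FN^{2/d}+O_F\bigl(N^{2/d-\kappa_d}\bigr)$. Summing over the finite set $\calF_d$ produces the claimed main term $\bigl(\sum_{F\in\calF_d}C_F\bigr)N^{2/d}$, at the cost of two corrections: (i) the overcounting from integers in $F(\Z^2)\cap G(\Z^2)$ for distinct $F,G\in\calF_d$, and (ii) the integers represented only by forms of degree strictly greater than $d$. The $\Q$--homography--free hypothesis, via Propositions \ref{central724} and \ref{central}, rules out structural equalities of value sets in (i), and the overlap can then be bounded by $O(N^{\vartheta_{d}+\epsilon})$ by the same sieve-and-logarithm-form mechanism as in \cite{FW1,FW2}. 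For (ii), part (a) controls $\sharp \calR_{\geqslant d^\dag}(N)$ by a sum dominated by $N^{2/d^\dag}$, with \eqref{Equation:majorationCalAeta} keeping the count of higher-degree forms and their Stewart--Xiao constants under control. Combining (i) and (ii) yields the error $O(N^{\max\{\vartheta_{d}+\epsilon,2/d^\dag\}})$.

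\textbf{Part (c).} Under the additional $\Q$--rigidity hypothesis, formula \eqref{=1/2ou1/4} evaluates $W_F$ as $1/(2,d)$ when $F$ is not a form with squared arguments and as $1/4$ otherwise. Using $A^\star_F=A_F/2$ in the squared-arguments case (for which $(2,d)=2$) and $A^\star_F=A_F$ otherwise, we obtain $C_F=A^\star_F/(d,2)$ uniformly. Substituting this into the formula of (b) produces the statement of (c).

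\textbf{Main obstacle.} The hard step is (i): establishing the overlap bound $O(N^{\vartheta_{d}+\epsilon})$ \emph{uniformly} over the possibly many forms in $\calF_d$, whose heights may approach $\exp(\eta d/\log d)$. This forces a careful application of Proposition \ref{Prop:9.22} with explicit dependence on the coefficients, which is exactly where the strength of \eqref{Equation:majorationCalAeta} is felt, and it is also where the automorphism analysis of Propositions \ref{central724}--\ref{central} is essential in order to exclude systematic (as opposed to accidental) coincidences of value sets.
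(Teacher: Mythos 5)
Your overall strategy (form-by-form Stewart--Xiao for $\calF_d$, overlap bounds for distinct forms, a diophantine tail estimate for large degrees) is in substance what the paper does, except that the paper packages the first two ingredients through the notion of a regular family (Definition \ref{def:Regular}) and the black-box Theorem \ref{Th:VFW2th1.6} from \cite{FW2}. But as written your treatment of the higher-degree contribution in (b) has a genuine gap. You claim that ``part (a) controls $\sharp\calR_{\geqslant d^\dag}(N)$ by a sum dominated by $N^{2/d^\dag}$''. Part (a) is a bound on the number of \emph{representations of a single integer} $m$; it does not count the represented integers, and the diophantine mechanism can never produce the exponent $2/d'$: the analogue of what you need is Theorem \ref{Th:majorationasymptotique}\,(b), which bounds the number of represented integers of degree $\geqslant d'$ by $c_2N^{\lambda/d'}$ with $\lambda>2$, and this is $\leqslant N^{\epsilon_1}$ only once $d'>\lambda/\epsilon_1$. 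So the tail must be split at a threshold depending on $\epsilon$: for degrees above the threshold use the counting estimate just mentioned (which your sketch never states or proves, and which is where \eqref{Equation:majorationCalAeta} enters through the bound on $\sharp\calE_{k'}$); for the finitely many forms of degree between $d^\dag$ and the threshold apply \eqref{Equation:CFN2/d} individually to get $O(N^{2/d^\dag})$. One cannot instead apply Stewart--Xiao to \emph{all} higher-degree forms and ``control the constants'': the error term in \eqref{Equation:CFN2/d} is not uniform in $F$, and the number of forms of degree $d'$ may grow like $\exp(\eta d'/\log d')$, so that sum diverges. A related omission: when $3\leqslant d<d_0$ the hypothesis \eqref{Equation:majorationCalAeta} is not available for the degrees in $[d,d_0)$, and the paper handles this range by a separate inclusion--exclusion over the finitely many forms involved (using \cite[Theorem 1.1]{SX} and \cite[Theorem 1.1]{FW2}); your argument does not distinguish this case. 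Finally, in (a) the finiteness of $\calG_{\geqslant d}(m)$ for $3\leqslant d<d_0$ needs Thue's theorem for the finitely many low-degree forms, since the height hypothesis gives nothing there.

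A second, more conceptual point: Propositions \ref{central724} and \ref{central} play no role in the proof of this theorem. They are used later only to \emph{construct examples} of families that are $\Q$--homography--free or $\Q$--rigid. In (b) the $\Q$--homography--free property is an assumption, and it is exactly what is needed, either as condition (i) of Definition \ref{def:Regular} or, in your direct unpacking, to make \cite[Theorem 1.1]{FW2} applicable to the overlap of two distinct forms of degree $d$ (giving the $O(N^{\vartheta_d+\epsilon})$ term). Invoking the homography analysis here is not harmful, since the needed fact is hypothesised, but it signals a misreading of the logical structure; in particular those propositions only concern fewnomials with specific vanishing patterns and would not cover a general $\Q$--homography--free family. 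Your derivation of (c) from (b) via $W_F$ and $A^\star_F$ is correct and matches the paper.
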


We will prove this result with $\eta=\epsilon (2^{80}3^{15}r^{4r})^{-1}$, corresponding to a value for $\mu$ given by \eqref{Equation:mu} with $\lambda=2+\epsilon$. 

 The proof of Theorem \ref{Th:principal} is given in \S \ref{Section:ProofPrincipal}: we first recall 
 the definition of a regular family \cite[Definition 2.2]{FW2} (Definition \ref{def:Regular}) and the statements of \cite[Theorem 2.6]{FW2} and of \cite[Corollary 9.22]{W} (Theorems \ref{Th:VFW2th1.6} and Proposition \ref{Prop:9.22} respectively). These tools allow us to prove the asymptotic estimate (Theorem \ref{Th:majorationasymptotique}) which is required for checking the conditions of a regular family.
 
 The rest of the paper is devoted to giving examples of families satisfying the assumptions of Theorem \ref{Th:principal}. These examples are stated in \S \ref{Section:Examples}, Theorems \ref{486} and \ref{527}. The main purpose of \S \ref{Section:tools} is to study isomorphisms among two binary fewnomials. The technical arguments are the proofs of Propositions \ref{central724} and \ref{central}. The proofs of the Corollaries \ref{Corollary2star} and \ref{Corollary2} are given in \S \ref{Proofs486et2star} and \S \ref{S:Proof527} respectively.
 
 %%%%%%%%%%%%%%%%%%%%%%%%%%%%%%%%%%%%%%%%%%%%%%%%%%%%%%%%%%%%%% 
 \section{Proof of Theorem \ref{Th:principal}}\label{Section:ProofPrincipal}

 %%%%%%%%%%%%%%%%%%%%%
\subsection{Regular families}

The next definition is Definition 2.2 of \cite{FW2}. 

\begin{definition}\label{def:Regular}
An infinite family $\mathcal F$ of binary forms as in Definition \ref{defFamily} with coefficients in $\Z$ is called {\em regular} if the following properties are satisfied:
\\
(i) Two forms in the family $\mathcal F$ are $\Q$--isomorphic if and only if they are equal. 
\\
(ii) There exists an integer $A>0$ such that for all $\epsilon>0$, there exist two positive integers $N_0=N_0(\epsilon)$ and $d_0=d_0(\epsilon)$ such that, for all $N\geqslant N_0$, the number of integers $m\in [-N,N]$ for which there exists $d\in\Z$, $(x,y) \in \Z^2$ and $F\in \calF_d$ with
\[
d\geqslant d_0,\quad 
 \max\{\vert x\vert, \vert y \vert\} \geqslant A\quad\hbox{and }\quad F(x,y)=m
\] 
is at most $N^\epsilon$. 
\end{definition}

We also borrow the following notation from \cite{FW2}:
\[
 \begin{aligned}
{\calR}_{\geqslant d} \left (\calF, N, A\right) := \sharp\, \bigl\{ m: 0\leqslant \vert m \vert \leqslant N, \, \text{ there is } \, F\in \calF \text{ with } \deg F \geqslant d
\\
\text{ and } (x,y)\in \Z^2 \text{ with } \max \{\vert x \vert, \vert y \vert\} \geqslant A, \text{ such that } F(x,y) =m
\bigr\}. 
\end{aligned}
\]

Here is the statement of \cite[Theorem 2.6]{FW2}.

\begin{theorem}\label{Th:VFW2th1.6}
Let $\calF$ be a regular family of distinct binary forms in the meaning of Definition \ref{def:Regular}. 
 Then for every $d\geqslant 3$ and every positive $\epsilon$, the quantity
 $
 {\calR}_{\geqslant d} \left (\calF, N, A\right)$ satisfies
\[
\calR_{\geqslant d} (\calF, N, A) = \left( \sum_{F\in \calF_d} A_F W_F \right) \cdot N^{2/d} + 
O_{\calF,A,d, \epsilon}\bigl( N^{\max\{ \vartheta_d+\epsilon, 2/d^\dag\} }\bigr), 
\]
uniformly as $N \to\infty$.
\end{theorem}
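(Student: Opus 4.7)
The plan is to decompose $\calR_{\geqslant d}(\calF, N, A)$ along the degree filtration $\calF=\bigsqcup_{d'\geqslant d}\calF_{d'}$ and to control three contributions separately: the individual counts coming from forms in $\calF_d$, the overlaps among distinct forms in $\calF_d$, and the tail from $\calF_{>d}$. For the leading term, since $\calF_d$ is finite I apply the Stewart--Xiao estimate \eqref{Equation:CFN2/d} to each $F\in\calF_d$, which yields
\[
\sum_{F\in\calF_d}\rho_F(N,A) = \Bigl(\sum_{F\in\calF_d} A_F W_F\Bigr) N^{2/d} + O_{\calF,d}\bigl(N^{2/d-\kappa_d}\bigr),
\]
where $\rho_F(N,A)$ denotes the number of $m\in[-N,N]$ represented by $F$ with $\max(|x|,|y|)\geqslant A$ (the restriction $\max(|x|,|y|)\geqslant A$ discards only finitely many $m$ for each $F$ and is harmless).

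To pass from this summed count to the true cardinality of $\calR_{\geqslant d}(\calF,N,A)$, I must bound the overlap: for two distinct forms $F,G\in\calF_d$ the number of $m\in[-N,N]$ simultaneously in $F(\Z^2)$ and $G(\Z^2)$. Regularity condition (i) guarantees that $F$ and $G$ are not $\Q$--isomorphic, and the key input is that for any two non-$\Q$--isomorphic forms of degree $d$ the set of common values up to $N$ has cardinality $O_{F,G,\epsilon}(N^{\vartheta_d+\epsilon})$. This is the deep step: it rests on uniform bounds for Thue equations in the Bombieri--Schmidt/Evertse/Stewart lineage, exploiting the fact that $F\not\cong_\Q G$ forces $F(x,y)=m=G(x',y')$ to have only polynomially many solutions in $m^{o(1)}$. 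Summing over the $O_d(1)$ unordered pairs in $\calF_d$ gives a total overlap of size $O_{\calF,d,\epsilon}(N^{\vartheta_d+\epsilon})$.

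For the tail $\calR_{>d}(\calF,N,A)$ I split further. Each $F\in\calF_{d^\dag}$ contributes $O(N^{2/d^\dag})$ by the trivial bound from \eqref{Equation:CFN2/d}, and $\calF_{d^\dag}$ is finite; forms of intermediate degree $d^\dag<d'<d_0(\epsilon)$ contribute $O(N^{2/d'})\leqslant O(N^{2/d^\dag})$ in aggregate (finitely many forms); and the high tail $\deg F\geqslant d_0(\epsilon)$ is precisely what regularity condition (ii) is tailored to control, giving a uniform bound $O(N^\epsilon)$. Altogether
\[
\calR_{>d}(\calF,N,A) = O_{\calF,d,\epsilon}\bigl(N^{\max\{2/d^\dag,\epsilon\}}\bigr).
\]
Combining the three contributions and using $\vartheta_d<2/d$ to absorb the $N^{2/d-\kappa_d}$ error from the leading term yields the claimed bound $O_{\calF,A,d,\epsilon}\bigl(N^{\max\{\vartheta_d+\epsilon,\,2/d^\dag\}}\bigr)$.

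The principal obstacle is the overlap estimate for two non-$\Q$--isomorphic forms of the same degree $d$: one needs simultaneously the sharp exponent $\vartheta_d$ and enough uniformity in the pair $(F,G)$ that summing over the $O_d(1)$ pairs in $\calF_d$ does not degrade it. Everything else --- invoking Stewart--Xiao as a black box, the trivial estimate for $\calF_{d^\dag}$, and the direct appeal to regularity condition (ii) for the far tail --- is comparatively routine bookkeeping.
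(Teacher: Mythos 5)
The paper contains no proof of this statement to compare against: Theorem \ref{Th:VFW2th1.6} is imported verbatim as \cite[Theorem 2.6]{FW2}, and the present text uses it as a black box. That said, your sketch does follow the strategy of the cited proof, which is also mirrored in the paper's own bookkeeping in \S\ref{Subsection:ProofPrincipal} (see \eqref{Equation:18}--\eqref{Equation:22}): apply the Stewart--Xiao asymptotic \eqref{Equation:CFN2/d} to each of the finitely many forms in $\calF_d$, remove pairwise overlaps of distinct (hence, by regularity (i), non-$\Q$-isomorphic) degree-$d$ forms, bound the degrees $d'$ with $d^\dag\leqslant d'<d_0(\epsilon)$ trivially by $O(N^{2/d^\dag})$, and dispose of degrees $\geqslant d_0(\epsilon)$ by regularity condition (ii) with the same $A$ as in the statement. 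The routine parts (Bonferroni for the union over $\calF_d$, the $O_A(1)$ effect of the restriction $\max\{|x|,|y|\}\geqslant A$, the tail bound $O(N^{\max\{2/d^\dag,\epsilon\}})$) are handled correctly.

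Two caveats. First, your ``deep step'' --- that two non-$\Q$-isomorphic forms of degree $d$ share at most $O_{F,G,\epsilon}(N^{\vartheta_d+\epsilon})$ values up to $N$ --- is not derived in your proposal; it is exactly Theorem 1.1 of \cite{FW1}/\cite{FW2}, the very result this paper invokes at \eqref{Equation:21}, and the exponent $\vartheta_d$ is defined in \cite[(2.1)]{FW1} precisely so that this bound holds. Invoking it is legitimate, but then what you have is a reduction of the theorem to the two principal results of the earlier papers (Stewart--Xiao and the overlap bound), which is indeed how the original proof proceeds, rather than an independent argument. Second, your absorption of the Stewart--Xiao error $O(N^{2/d-\kappa_d})$ into $O(N^{\vartheta_d+\epsilon})$ is justified by ``$\vartheta_d<2/d$'', which proves nothing in that direction; what is needed is $2/d-\kappa_d\leqslant\vartheta_d+\epsilon$, and this holds because of how $\vartheta_d$ is calibrated against the Stewart--Xiao error exponent in \cite{FW1}, not because $\vartheta_d<2/d$. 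With those attributions made explicit, your outline is sound and coincides with the intended proof.
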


 %%%%%%%%%%%%%
 \subsection{Diophantine tool}%\label{S:ResultatDiophantien} 

Our main tool for the proof of Theorem \ref{Th:principal} is an asymptotic estimate (Theorem \ref{Th:majorationasymptotique}) which we are going to deduce from a lower bound (Proposition \ref{Prop:9.22}) arising from the theory of linear forms in logarithms, namely \cite[Corollary 9.22]{W}. 

Using the notations of \cite[\S 5]{FW2}, we denote by $\rmH$ the absolute height, by $\rmh$ the absolute logarithmic height and by $\rmM$ the Mahler's measure; for a rational number written in its irreducible form as $p/q$, we have
\[
\rmH(p/q)=\rmM(p/q)=\max\{|p|,q\}, \qquad
\rmh(p/q)= \log\max\{|p|,q\}.
\] 

\medskip
\begin{proposition} \label{Prop:9.22}
Let $K$ be a number field of degree $\leqslant D$, $\alpha_1, \alpha_2$ nonzero elements of $K$, $b_1,b_2$ positive integers, $A_1,A_2,B$ positive real numbers. Assume, for $j=1,2$, 
\[
B\geqslant \max \{\rme,\; b_1,\; b_2\},
\quad 
\log A_j\geqslant \max\left\{\frac 1 D,\; \rmh(\alpha_j)\right\}.
\]
If $ \alpha_1^{b_1}\alpha_2^{b_2}\not=1$, then 
\[ 
|\alpha_1^{b_1}\alpha_2^{b_2}-1|\geqslant \exp
\left\{ 
-C(\log B)(\log A_1)(\log A_2)D^4 \max\{1,\log D\}
\right\}
\] 
where $C=2^{79}3^{15}$. 
\end{proposition}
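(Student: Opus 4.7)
The plan is to prove this via the interpolation determinant method (Laurent--Mignotte--Nesterenko style), which is the route giving explicit numerical constants. Setting $\Lambda = b_1\log\alpha_1+b_2\log\alpha_2$ for appropriate determinations of the logarithms, one has $|\alpha_1^{b_1}\alpha_2^{b_2}-1|\sim|\Lambda|$ when the left-hand side is small, so it suffices to lower bound $|\Lambda|$. First I would fix positive integer parameters $L_1,L_2,K$ to be chosen later, of orders of magnitude roughly
\[
L_j\asymp \bigl(D\log A_{3-j}\bigr)^{1/2}(\log B)^{1/2},\qquad K\asymp \frac{(\log A_1)(\log A_2)D^2\log D}{\log B},
\]
so that the arithmetic and analytic bounds balance.

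Next, I would form the square matrix
\[
M=\bigl(\alpha_1^{\ell_1 k}\alpha_2^{\ell_2 k}\bigr),\qquad (\ell_1,\ell_2)\in\{0,\dots,L_1\}\times\{0,\dots,L_2\},\ k\in\{0,\dots,N-1\},
\]
of size $N=(L_1+1)(L_2+1)$, and study its determinant $\Delta$. The analytic upper bound is obtained by interpreting the rows of $M$ as values at integer points $k$ of the entire functions $z\mapsto e^{z(\ell_1\log\alpha_1+\ell_2\log\alpha_2)}$; writing $\ell_2\log\alpha_2=-(\ell_2/b_2)b_1\log\alpha_1+(\ell_2/b_2)\Lambda$ we can replace each such function by $e^{z(\ell_1-\ell_2 b_1/b_2)\log\alpha_1}$ at the price of an error $O(|\Lambda|\cdot|z|\cdot L_2/b_2)$. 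Applying Schwarz's lemma on a disk of suitable radius (together with Hadamard's inequality in the usual way), one gets an upper bound of shape
\[
\log|\Delta|\leqslant -c_1 N^2 \log\!\bigl(K/N\bigr)+c_2 NK\max(\log A_1,\log A_2)+ c_3 NK\log|\Lambda|^{-1}\cdot(\text{combinatorics}).
\]
The arithmetic lower bound is Liouville: $\Delta\in K$ with $[K:\Q]\leqslant D$ is a nonzero algebraic integer of controlled height, so $\log|\Delta|\geqslant -(D-1)\cdot\mathrm h(\Delta)\geqslant -c_4 DNK\max(\log A_1,\log A_2)$. Comparing the two inequalities and solving for $\log|\Lambda|^{-1}$ produces the claimed bound, provided $\Delta\neq 0$.

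The main obstacle, as always in this circle of ideas, is the \emph{zero estimate}: showing that $\Delta\neq 0$. Here I would use the classical Laurent-type combinatorial argument, which reduces non-vanishing to the fact that the monomials $\alpha_1^{\ell_1}\alpha_2^{\ell_2}$ for $(\ell_1,\ell_2)$ in a rectangle $[0,L_1]\times[0,L_2]$ are multiplicatively independent modulo the hypothesis $\alpha_1^{b_1}\alpha_2^{b_2}\neq 1$, as soon as $L_1$ and $L_2$ are chosen of size strictly less than $b_2$ and $b_1$ respectively (a harmless condition after perhaps replacing $(b_1,b_2)$ by a reduced pair using $\gcd$ considerations). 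A Vandermonde-type factorisation of $\Delta$ then shows it factors as a nonzero algebraic expression times $\prod_{(\ell_1,\ell_2)\neq(\ell_1',\ell_2')}(\alpha_1^{\ell_1-\ell_1'}\alpha_2^{\ell_2-\ell_2'}-1)$, which is nonzero.

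The last, and most tedious, piece is the explicit constant $C=2^{79}3^{15}$. This requires careful bookkeeping throughout: the factorials from Hadamard's bound, the constants in Schwarz's lemma applied on an optimally chosen disk, the height estimate for $\Delta$ (which brings in factors like $N\log N$ from the size of binomial coefficients and $D\log D$ from the degree of $K$), and finally the balancing of $L_1,L_2,K$. The factor $D^4\max\{1,\log D\}$ emerges from one factor of $D$ in the Liouville step, one factor of $D$ from each of $\log A_1,\log A_2$ being normalised by $1/D$, and the remaining $D^2\log D$ arising from the zero-estimate argument forcing $K\gtrsim D^2\log D$. I do not expect to refine the constant $2^{79}3^{15}$ beyond what the standard proof gives; reproducing it exactly would be the longest computation in the write-up.
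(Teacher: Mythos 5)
Your proposal and the paper diverge completely in approach: the paper does not prove this proposition from scratch at all, but simply deduces it from \cite[Corollary 9.22, p.~308]{W}, taking the numerical constant $C(m)$ of \cite[p.~252]{W} with $m=2$ (which is exactly where $C=2^{79}3^{15}$ and the factor $D^4\max\{1,\log D\}$ come from). You instead set out to reprove the two-logarithm lower bound ab initio by interpolation determinants. That is a legitimate strategy in the literature (Laurent--Mignotte--Nesterenko), but as written your argument has two genuine gaps. First, the zero estimate: the determinant of your matrix $\bigl(\alpha_1^{\ell_1 k}\alpha_2^{\ell_2 k}\bigr)$ is a Vandermonde determinant in the quantities $\gamma_{(\ell_1,\ell_2)}=\alpha_1^{\ell_1}\alpha_2^{\ell_2}$, so its non-vanishing requires \emph{all} these monomials to be pairwise distinct, i.e.\ $\alpha_1^{m_1}\alpha_2^{m_2}\neq 1$ for every nonzero $(m_1,m_2)$ with $|m_1|\leqslant L_1$, $|m_2|\leqslant L_2$. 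The hypothesis $\alpha_1^{b_1}\alpha_2^{b_2}\neq 1$ gives you none of this: take $\alpha_2=\alpha_1^{-1}$ with $b_1\neq b_2$, or let $\alpha_1$ be a root of unity, and your determinant vanishes identically while the conclusion still has to be proved. The standard treatment handles the multiplicatively dependent case separately (reducing to a Liouville-type estimate for a single power) and, in the independent case, uses a different matrix (monomials $x^{k_0}$ times exponentials, evaluated at the points $rb_2+sb_1$) precisely so that the relevant zero estimate involves the hypothesis on $(b_1,b_2)$; your restriction $L_1<b_2$, $L_2<b_1$ does not repair this, since relations other than $\alpha_1^{b_1}\alpha_2^{b_2}=1$ can kill the determinant.

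Second, the statement being proved is quantitative: its entire content beyond the qualitative Baker-type bound is the explicit constant $2^{79}3^{15}$ together with the precise dependence $D^4\max\{1,\log D\}$ under the normalizations $\log A_j\geqslant\max\{1/D,\rmh(\alpha_j)\}$ and $B\geqslant\max\{\rme,b_1,b_2\}$. You explicitly defer this bookkeeping and say you do not expect to reproduce the constant; but without it there is no proof of the proposition as stated, only of a result of the same shape with an unspecified constant. (Your heuristic accounting of the powers of $D$ is also not a derivation.) If you want a self-contained argument you must either carry out the full Laurent--Mignotte--Nesterenko computation, including the dependent case and the constant, or do what the paper does and quote \cite[Corollary 9.22]{W} directly, checking that its hypotheses and its constant $C(2)$ specialize to the statement here.
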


\medskip
When $D=1$, we recognize \cite[Proposition 5.1]{FW2}.

Proposition \ref{Prop:9.22} follows from \cite[Corollary 9.22 p. 308]{W} with the constant $C(m)$ of \cite[p. 252]{W}.

\bigskip

The next lemma gives an upper bound for the absolute logarithmic height $\rmh$ of an algebraic number in terms of its usual height. 

 \begin{lemma}\label{Lemme:hVersusH}
Let $\theta$ be an algebraic number which is root of a nonzero polynomial of degree $r$ having integer coefficients bounded by $H$. Then
\begin{equation*} 
 \rme^{\rmh(\theta)}\leqslant \sqrt{r+1} H.
\end{equation*}
 \end{lemma}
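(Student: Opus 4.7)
The plan is to reduce the bound on $\rmh(\theta)$ to Landau's inequality, which relates the Mahler measure of a polynomial to the $L^2$ norm of its coefficient vector. Let $P(X) = c_0 + c_1 X + \cdots + c_r X^r \in \Z[X]$ with $P(\theta)=0$ and $\max_i|c_i|\leqslant H$. Denote by $p_\theta\in\Z[X]$ the primitive minimal polynomial of $\theta$, of degree $d \leqslant r$, so that by definition
\[
\rmh(\theta)=\frac{1}{d}\log \rmM(p_\theta).
\]
The first step is to show that $\rmM(p_\theta)\leqslant \rmM(P)$. Since $p_\theta$ divides $P$ in $\Q[X]$ and $p_\theta$ is primitive, Gauss's lemma gives a factorization $P=p_\theta\cdot Q$ with $Q\in\Z[X]$. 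Multiplicativity of the Mahler measure then yields $\rmM(P)=\rmM(p_\theta)\rmM(Q)$, and since $Q$ is a nonzero polynomial with integer coefficients, the inequality $\rmM(Q)\geqslant 1$ (coming from the fact that the leading coefficient of $Q$ is a nonzero integer and the product of the $\max\{1,|\cdot|\}$ factors over its roots is at least $1$) gives $\rmM(p_\theta)\leqslant \rmM(P)$.

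The second step is Landau's inequality: for any $P\in \C[X]$,
\[
\rmM(P)\leqslant \|P\|_2=\Bigl(\sum_{i=0}^r |c_i|^2\Bigr)^{1/2}.
\]
Combined with the trivial estimate $\|P\|_2\leqslant \sqrt{r+1}\,\|P\|_\infty \leqslant \sqrt{r+1}\,H$, we obtain $\rmM(p_\theta)\leqslant \sqrt{r+1}\,H$. Hence
\[
\rme^{\rmh(\theta)}=\rmM(p_\theta)^{1/d}\leqslant \bigl(\sqrt{r+1}\,H\bigr)^{1/d}\leqslant \sqrt{r+1}\,H,
\]
where the last step uses that $\sqrt{r+1}\,H\geqslant 1$ and $d\geqslant 1$, so that raising to the power $1/d$ only decreases the quantity.

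There is no real obstacle here beyond invoking Landau's inequality, whose short proof (via Jensen's formula applied to the map sending a monic polynomial with roots $\alpha_j$ to the polynomial with roots $1/\overline{\alpha_j}$ when $|\alpha_j|>1$, or via Parseval's identity on the unit circle) is standard and can simply be cited. The only subtle point worth stating explicitly is the passage from a bound on $\rmM(P)$ for the given polynomial $P$ to a bound on $\rmM(p_\theta)$ for the minimal polynomial, which requires Gauss's lemma to stay in $\Z[X]$; everything else is a one-line estimate.
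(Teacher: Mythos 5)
Your proof is correct and follows essentially the same route as the paper: the bound $\rmM(P)\leqslant\sqrt{r+1}\,H$ (Landau's inequality, cited in the paper from \cite[Chap.~3]{W}), the reduction to the minimal polynomial via multiplicativity of the Mahler measure and $\rmM(Q)\geqslant 1$ for the integer cofactor, and finally dropping the factor $1/d$ in $\rmh(\theta)=\frac1d\log\rmM(p_\theta)$. Your explicit appeal to Gauss's lemma just makes precise the paper's step "if $f$ is a factor of $h$ in $\Z[X]$"; there is no substantive difference.
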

 
\begin{proof} 
We use the results of \cite[Chap.~3]{W} (see the proofs and notations of Lemmas 3.10 and 3.11). For $h\in\C[X]$ a polynomial of degree $r$, the coefficients of which have moduli $\leqslant H$, we have
\[
\rmM(h)\leqslant \sqrt{r+1}H.
\]
If $h\in\Z[X]$ and if $f$ is a factor of $h$ in $\Z[X]$, we have 
$\rmM(h/f)\geqslant 1$, hence $\rmM(f)=\rmM(h)/\rmM(h/f)\leqslant \rmM(h)$. 
Further, is $f$ is irreducible and if $\theta$ is a root of $f$, then 
\[
 \rmh(\theta)= \frac 1 {[\Q(\theta):\Q]} \log \rmM(f) \leqslant \log \rmM(f).
 \]
\end{proof}

We consider a family $\mathcal F = \mathcal F_{\boldsymbol {\mathcal D}}$ of binary fewnomials attached to the data $\boldsymbol {\mathcal D}=(r, (\mathcal E_k))$ as in Definition \ref{fewnomial}. Let $(a_0,a_1,\dots,a_r)\in\calE_k$ and let
\[
h(t)=
a_0t^r+a_1t^{r-1}+\cdots+a_r\in\Z[t]
\]
be the polynomial associated with $(a_0,a_1,\dots,a_r)$.
We decompose $h$ into irreducible factors in $\C[t]$:
\[
h(t)=a_0\prod_{j=1}^r(t-\theta_j).
\]
By hypothesis, $\theta_1,\dots,\theta_r$ are pairwise distinct. The degree $D$ of the number field $\Q(\theta_1,\dots,\theta_r)$ 
satisfies $1\leqslant D\leqslant r!$. 

Let $k\geqslant 1$ and $r\geqslant 1$ be two integers such that the product $d=kr$ is $\geqslant 3$. Let 
 \begin{equation}\label{Equation:F}
 F(X,Y)=a_0X^{kr}+a_1X^{k(r-1)}Y^k+\cdots+a_{r-1}X^kY^{k(r-1)}+a_rY^{kr}
 \end{equation}
be the binary form in $\calF_d$ given by \eqref{278bis}.

Recall the Definition \eqref{Equation:calA} of $\calA(F)$. Thanks to Lemma \ref{Lemme:hVersusH}, we have
\[
\max_{1\leqslant j\leqslant r} \rme^{ \rmh(\theta_j)}\leqslant \sqrt{r+1} {\calA}(F).
\] 
The next result follows from Proposition \ref{Prop:9.22}. 

\begin{corollary}\label{Cor:H(xd,yd)}
Let $x$ and $y$ be in $\Z$. Set $\calX:=\max\{|x|,|y|\}$. Let $F$ be as \eqref{Equation:F}. 
Assume 
 $\calX\geqslant 2$ and
 $F(x,y)\not=0$. 
Then 
 \[ 
 |F(x,y)| 
 \ge
\max\{ |a_0x^{d}|, |a_r y^{d}| \} 
 \exp\big\{-C r^{4r} (\log d)(\log \calX)(\log \calA^\star(F))\big\}
 \]
 with the constant $C$ of Proposition \ref{Prop:9.22}.
\end{corollary}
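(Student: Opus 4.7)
The plan is to factor $F(x,y)$ over $\overline{\Q}$ using the roots $\theta_1,\dots,\theta_r$ of $h(T)$ and apply Proposition \ref{Prop:9.22} to each linear factor. The cases $x=0$ and $y=0$ are trivial since $F(x,0)=a_0x^d$ and $F(0,y)=a_ry^d$; assume henceforth $xy\ne 0$.

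From $h(t)=a_0\prod_{j=1}^r(t-\theta_j)$ one obtains
\[
F(x,y)=a_0\prod_{j=1}^r(x^k-\theta_jy^k)=a_0x^d\prod_{j=1}^r\bigl(1-\theta_j(y/x)^k\bigr),
\]
and $F(x,y)\ne 0$ ensures each factor $1-\theta_j(y/x)^k$ is nonzero. For each $j\in\{1,\dots,r\}$, apply Proposition \ref{Prop:9.22} in the number field $K=\Q(\theta_1,\dots,\theta_r)$, of degree $D\le r!$, with $\alpha_1=\theta_j$, $\alpha_2=y/x$, $b_1=1$, $b_2=k$. One may take $B=\max\{\rme,k\}$, so $\log B\le\log d$ since $d=kr\ge 3$; $\log A_2=\max\{1/D,\log\calX\}$, bounded by a constant multiple of $\log\calX$ since $\calX\ge 2$; and $\log A_1=\max\{1/D,\rmh(\theta_j)\}$, bounded by a constant multiple of $\log\calA^\star(F)$ via Lemma \ref{Lemme:hVersusH}.

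Multiplying the $r$ resulting lower bounds on $|1-\theta_j(y/x)^k|$, and using $rD^4\max\{1,\log D\}\le r(r!)^4\max\{1,r\log r\}$ together with the modest constants absorbed into $\log A_1$ and $\log A_2$, all $r$--dependent factors fit comfortably into $r^{4r}$, thanks to the huge constant $C=2^{79}3^{15}$ of Proposition \ref{Prop:9.22}. This yields
\[
|F(x,y)|\ge|a_0x^d|\exp\bigl\{-Cr^{4r}(\log d)(\log\calX)(\log\calA^\star(F))\bigr\}.
\]
The companion bound with $|a_ry^d|$ in place of $|a_0x^d|$ follows by applying the same argument to the reversed fewnomial $\tilde F(X,Y)=a_rX^d+a_{r-1}X^{d-k}Y^k+\cdots+a_0Y^d$ evaluated at $(y,x)$, using that $\tilde F$ has the same height $\calA^\star$ as $F$, that the roots $\theta_j^{-1}$ of its associated polynomial have the same heights as $\theta_j$, and that $\tilde F(y,x)=F(x,y)$. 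Taking the maximum yields the claim. The only delicate point is the routine constant bookkeeping.
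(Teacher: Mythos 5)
Your proposal follows essentially the paper's own route: factor $F(x,y)=a_0x^d\prod_{j=1}^r\bigl(1-\theta_j(y/x)^k\bigr)$, apply Proposition \ref{Prop:9.22} to each factor with the bound $D\leqslant r!$ and Lemma \ref{Lemme:hVersusH}, multiply the $r$ lower bounds, and obtain the other half of the max by symmetry (the paper does this by assuming WLOG $|a_0x^d|\geqslant|a_ry^d|$ and using $\rmh(1/\theta_j)=\rmh(\theta_j)$; your reciprocal form $\tilde F$ is the same device). Your uniform choice $B=\max\{\rme,k\}$ even covers $k=1$, which the paper instead disposes of by the trivial bound $|F(x,y)|\geqslant 1$ because its choice $\log B=\log k/\log 2$ degenerates there.

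The one point you dismiss as routine is the only place where care is actually needed, and your justification of it is off. The size of $C$ is irrelevant: $C$ multiplies both your upper estimate for the exponent and the target $Cr^{4r}(\log d)(\log\calX)(\log\calA^\star(F))$, so it creates no room. What must be checked is an inequality of the shape $r\,c_1c_2\,(r!)^4\max\{1,\log r!\}\leqslant r^{4r}$, where $c_1,c_2$ are the factors you lose when replacing $\max\{1/D,\rmh(\theta_j)\}$ by $\log\calA^\star(F)$ (via the $\sqrt{r+1}$ of Lemma \ref{Lemme:hVersusH}, and since $\calA^\star(F)$ may equal $2$) and $\max\{1/D,\log\calX\}$ by $\log\calX$ (since $\calX$ may equal $2$); each costs up to $1/\log 2$. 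For $r\geqslant 2$ this is exactly the Stirling computation the paper carries out, namely $\frac{1}{(\log 2)^2}(r!)^4\,r\log(r!)\leqslant r^{4r-1}$, so your argument is sound in that range. For $r=1$, however, $r^{4r}=1$ leaves no slack whatsoever, and the uniform bookkeeping overshoots the stated constant by roughly $(\log 2)^{-2}$; this is precisely why the paper quotes \cite[Corollary 5.2]{FW2} for $r=1$ rather than running the general computation. So either cite that case separately, as the paper does, or restrict your uniform estimate to $r\geqslant 2$ (which is all Theorem \ref{Th:principal} needs); as written, the claim that the constants ``fit comfortably thanks to the huge constant $C$'' does not substantiate the $r=1$ case.
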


\begin{proof} 
The case $r=1$, $k\geqslant 3$ is \cite[Corollary 5.2]{FW2}.

\ 
When $k=1$ and $d=r\geqslant 3$, the trivial lower bound $|F(x,y)|\geqslant 1$ gives a stronger result, since 
\[
\max\{ |a_0x^{d}|, |a_r y^{d}| \}\leqslant \calA^\star(F) \calX^d. 
\]
We now assume $k\geqslant 2$ and $r\geqslant 2$. 
We may also assume $xy\not=0$ since the result is trivial when $xy=0$.

Let us write 
\[
F(x,y)=a_0 \prod_{j=1}^r(x^k-\theta_jy^k).
\]
By symmetry, since $a_0a_r\not=0$ and since $\rmh(1/\theta_j)=\rmh(\theta_j)$, we may assume $ |a_0x^{d}| \geqslant |a_r y^{d}|$. 
Let $j\in\{1,2,\dots,r\}$. We first use Proposition \ref{Prop:9.22} with 
\[
D=[\Q(\theta_j):\Q]\leqslant r!,\quad 
b_1=k,\quad b_2=1,\quad \alpha_1=\frac y x, \quad \alpha_2=\theta_j,
\]
\[
\log B= \frac{\log k}{\log 2}, 
 \quad
 \log A_1= \frac{\log\calX}{\log 2}, 
 \quad \log A_2= r\log\calA^\star(F).
\]
Using Stirling's formula \cite{Ro} 
\[
r!\leqslant r^r e^{-r} \sqrt{2\pi r} \, \rme^{1/12r}
\]
together with the upper bound 
\[
4\pi^2 r^5 \frac{\log r}{(\log 2)^2} \rme^{1/3r}\leqslant \rme^{4r}
\]
for $r\geqslant 2$ we get
\[
\frac 1 {(\log 2)^2} r!^4r\log(r!)\leqslant r^{4r-1}.
\]
From Proposition \ref{Prop:9.22}, we deduce
\[
\left| 
\theta_j\left ( \frac y x\right)^k -1\right| \geqslant 
 \exp\big\{-C r^{4r-1} (\log k)(\log \calX)(\log \calA^\star(F))\big\}
 \]
for $1\leqslant j\leqslant d$. Hence 
\[
\prod_{j=1}^r\left| 
\theta_j\left( \frac y x \right)^k -1\right| 
\geqslant 
 \exp\big\{-C r^{4r} (\log k)(\log \calX)(\log \calA^\star(F))\big\}
\]
and
\[
\begin{aligned}
|F(x,y)|
&=
|a_0x^{d} | \prod_{j=1}^r\left| 
\theta_j\left( \frac y x \right)^k -1\right| 
\\
& \ge
 |a_0x^{d}| \exp\big\{-C r^{4r} (\log k) (\log \calX)(\log \calA^\star(F))\big\}.
 \end{aligned}
\]
\end{proof}

From Corollary \ref{Cor:H(xd,yd)} we deduce the lower bound 
 \[
 |F(x,y)|\geqslant \calX^{d} \exp\big\{- C r^{4r} (\log d) (\log \calX) (\log \calA^\star(F))\big\}
 \] 
which we write as 
\begin{equation}\label{Equation:diophantienne}
 |F(x,y)|\geqslant \calX^{d - C r^{4r} (\log d) (\log \calA^\star(F))}.
\end{equation}

 %%%%%%%%%%%%%%%%%%%%%%%%%%%%%%%%%%%%%%%%%%%%%%%%%%%%%%%%%%%%%% 
 \subsection{Asymptotic estimate}%\label{S:Asymptotic}
 
 The next result gives an asymptotic upper bound for the number of integers which are represented by binary forms of large degree in the family $\calF$ of binary fewnomials introduced in section \ref{S:Fewnomial}. It also gives an upper bound for the number of representations of such an integer. 
 
 Recall the constant $C=2^{79}3^{15}$ from Proposition \ref{Prop:9.22}.

 \begin{theorem}\label{Th:majorationasymptotique} 
Under the assumptions of Theorem \ref{Th:principal},
let $\lambda$ and $\mu$ be two real numbers satisfying $\lambda>2$ and
\begin{equation}\label{Equation:mu}
 0< \mu<\frac{\lambda-2}{Cr^{4r}\lambda}\cdotp
\end{equation}
Let $d_0\geqslant 3$ be an integer. Assume that the condition 
 \begin{equation}\label{Equation:majorationCalAmu}
\calA^\star(F)\leqslant \exp(\mu d/\log d)
\end{equation}
 is satisfied for all $d\geqslant d_0$ and all $F\in\calF_d$. Then 
 
 \vskip .3cm
 \noindent
(a) For all $m\in\Z\smallsetminus\{-1,0,1\}$ and all $d\geqslant 3$ multiple of $r$, the set $\calG_{\geqslant d}(m)$ is finite. 
Furthermore, for all $(\lambda, \mu,r,d)$ as above with $d\geqslant d_0$, there exists a constant $c_1$, 
only depending on $( \lambda, \mu, r,d)$, such that, for every $\vert m\vert \geqslant 2$, one has the inequality
\[
\sharp \mathcal G_{\geqslant d} (m) \leqslant c_1 \vert m\vert^{\lambda /(2d)}.
\]
 
 \vskip .3cm
 \noindent (b) For all $(\lambda, \mu,r,d)$ as above with $d\geqslant d_0$, there exists $c_2$ depending only on $(\lambda, \mu, r, d)$, such that, for all $ N \geqslant 2$, one has the inequality 
 \[
 \sharp \mathcal R_{\geqslant d} (N) \leqslant c_2N^{\lambda /d}.
 \]
 \end{theorem}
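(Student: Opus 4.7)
The main tool is the Diophantine estimate \eqref{Equation:diophantienne}: substituting the coefficient bound \eqref{Equation:majorationCalAmu} into it, I find that any triple $(x, y, F) \in \calG_{\geqslant d}(m)$ with $F \in \calF_{d'}$ and $\calX = \max\{|x|, |y|\} \geqslant 2$ must satisfy
\[
|m| = |F(x, y)| \geqslant \calX^{d'(1 - Cr^{4r}\mu)},
\]
which yields $\calX \leqslant |m|^{1/(d'(1 - Cr^{4r}\mu))}$ and, since $\calX \geqslant 2$, also $d' \leqslant \log|m|/((1 - Cr^{4r}\mu)\log 2)$. The hypothesis \eqref{Equation:mu} is equivalent to $1 - Cr^{4r}\mu > 2/\lambda$, that is, $1/(d'(1 - Cr^{4r}\mu)) < \lambda/(2d')$ \emph{strictly}. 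The positive slack $\delta := \lambda/(2d) - 1/(d(1 - Cr^{4r}\mu))$ provided by this strict inequality is what drives the whole argument.

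For part (a), I fix $m$ with $|m| \geqslant 2$ and enumerate triples by $d' = \deg F$, which ranges over $[d, D_0]$ with $D_0 = O(\log|m|)$. Three bounds combine at each such $d'$: by \eqref{Equation:New1} and \eqref{Equation:majorationCalAmu} we have $\sharp\calF_{d'} \leqslant (2\exp(\mu d'/\log d') + 1)^{r+1}$, which is $|m|^{o(1)}$ uniformly for $d' \leqslant D_0$; and for each $F \in \calF_{d'}$ and each integer $y$ with $|y| \leqslant \calX$, the polynomial $F(X, y) - m \in \Z[X]$ has degree $d'$ with leading coefficient $a_0 \neq 0$, hence at most $d'$ integer roots, so at most $d' (2|m|^{1/(d'(1 - Cr^{4r}\mu))} + 1)$ pairs $(x, y)$ per form. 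Multiplying and summing over $d' \in [d, D_0]$, the dominant contribution arises at $d' = d$, and the slack $\delta$ absorbs the extra $O((\log|m|)^2) \cdot |m|^{O(1/\log\log|m|)}$ factor to yield $\sharp\calG_{\geqslant d}(m) \leqslant c_1 |m|^{\lambda/(2d)}$. The finiteness of $\calG_{\geqslant d}(m)$ for every $d \geqslant 3$ (including the case $d < d_0$) then follows from the above quantitative control for $d' \geqslant d_0$, together with Thue's theorem applied to each of the finitely many forms in $\bigcup_{d \leqslant d' < d_0}\calF_{d'}$.

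Part (b) proceeds analogously. Since every $m \in \calR_{\geqslant d}(N)$ corresponds to at least one triple $(x, y, F) \in \calG_{\geqslant d}$ with $|F(x, y)| \leqslant N$, I bound $\sharp\calR_{\geqslant d}(N)$ by the total number of such triples. For each $F \in \calF_{d'}$ the Diophantine estimate now gives $\calX \leqslant N^{1/(d'(1 - Cr^{4r}\mu))}$, hence at most $(2 N^{1/(d'(1 - Cr^{4r}\mu))} + 1)^2$ pairs $(x, y)$. Summing against $\sharp\calF_{d'}$ over $d' \in [d, D_0(N)]$, the dominant $d' = d$ term contributes $\ll N^{2/(d(1 - Cr^{4r}\mu))}$, which is strictly smaller than $N^{\lambda/d}$ with slack $2\delta$; the same mechanism as before absorbs the subexponential form-count factor and the logarithmic number of degrees, yielding $\sharp\calR_{\geqslant d}(N) \leqslant c_2 N^{\lambda/d}$.

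The main delicate point is verifying that the polylogarithmic factor from the range of $d'$ together with the subexponential growth $\exp((r+1)\mu d'/\log d')$ of the form counts stay strictly inside the slack $\delta$ produced by the hypothesis $1 - Cr^{4r}\mu > 2/\lambda$. In part (a) it is essential to exploit the polynomial-degree bound of at most $d'$ values of $x$ per fixed $y$; replacing this by the crude $(2\calX + 1)^2$ count for pairs would produce the exponent $\lambda/d$ in place of the sharper target $\lambda/(2d)$ and the theorem would fail.
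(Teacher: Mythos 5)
Your proof is correct and follows essentially the same route as the paper's: the lower bound \eqref{Equation:diophantienne} combined with \eqref{Equation:majorationCalAmu} bounds $\calX$ and confines the degree to a range of length $O(\log|m|)$ (resp.\ $O(\log N)$), the number of forms per degree is controlled via \eqref{Equation:New1}, and the strict inequality $1-Cr^{4r}\mu>2/\lambda$ (your slack $\delta$, the paper's $\theta<\lambda/2$) absorbs the logarithmic and subexponential factors, with Thue's theorem handling the finitely many forms of degree below $d_0$. The only, harmless, deviation is the per-form count in (a): you bound the number of $x$ per fixed $y$ by the degree $d'$ of $F(X,y)-m$, whereas the paper exploits the fewnomial structure (a degree-$r$ polynomial in $y^{k'}$) to bound the number of $y$ per fixed $x$ by $2r$; both save the required factor of $\calX$.
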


\begin{proof} Define $\lambda'$ by the equality 
\[
\mu = \frac{\lambda' -2} {C r^{4r} \lambda'}\cdotp
\]
By \eqref{Equation:mu}, we have the equalities $2 < \lambda' < \lambda$. The number $\theta := \lambda'/2$ satisfies
\[
\theta >1 \text{ and } 1-\frac 1 \theta = \frac{\lambda'-2}{\lambda'} = Cr^{4r}\mu.
\]
Let $d\geqslant 3$ be a multiple of $r$, written as $d=kr$. Let $m\in \Z$, $\vert m \vert \geqslant 2$ be such that $\mathcal G_{\geqslant d} (m) \neq \emptyset$: there exists $(x,y, F) \in {\mathcal G}_{\geqslant d} (m)$ such that $m=F(x,y)$, where $F\in \mathcal F_{k'r} $ (with $k'\geqslant k$) is the binary form
\[
a_0 X^{k'r} +a_1 X^{k'(r-1)} Y^{k'}+\cdots +a_{r-1} X^{k'} Y^{k'(r-1)} + a_r Y^{k'r}.
\]
Let $\mathcal X = \max \{ \vert x \vert, \, \vert y \vert \}$. By hypothesis, we have $\mathcal X \geqslant 2$. The lower bound \eqref{Equation:diophantienne} yields 
\[
\vert m \vert \geqslant \mathcal X^{k'r -Cr^{4r} (\log (k'r))(\log \mathcal A^\star (F))}.
\]
Assume now $d\geqslant d_0$. 
From \eqref{Equation:majorationCalAmu} we deduce the upper bound
\[
Cr^{4r} (\log (k'r)) (\log \mathcal A^\star (F)) \leqslant Cr^{4r+1} \mu k' = k'r \left( 1 -\frac 1 \theta \right),
\]
hence
\[
\mathcal X^{k'r} \leqslant \vert m \vert^\theta.
\]
Define
\[
M_0:= \left\lfloor \frac{\theta \log \vert m \vert}{r \log 2}\right\rfloor.
 \] 
Thanks to the inequalities $\mathcal X \geqslant 2$ and $k'\geqslant k$ we deduce $M_0\geqslant k$ and
\begin{equation}\label{Equation:M0}
k'\leqslant M_0 \qquad \text{ and } \qquad \mathcal X \leqslant \vert m \vert ^{\theta/ (k'r)}.
\end{equation}
Given $x$ and $m$, the number of $y^{k'}$ such that $F(x,y)=m$ is at most $r$, hence the number of such $y$ is at most $2r$. This shows that the set $\mathcal G_{\geqslant d} (m)$ is finite for all $m$ with $|m|\geqslant 2$ and $d\geqslant d_0$. Since, for all $d \geqslant 3$ all the sets $\mathcal F_d$ are finite, we deduce from Thue's Theorem on the finiteness of the number of solutions of Thue's equation that the set $\mathcal G_{\geqslant d} (m)$
is finite for any $d\geqslant 3$.

We also deduce for $d\geqslant d_0$ that the number of elements in $\mathcal G_{\geqslant d} (m)$ is bounded by
\begin{equation}\label{Equation:sharpGd(m)}
\sharp \, \mathcal G_{\geqslant d} (m) \leqslant 4r \vert m \vert^{\theta/d} \sum_{k'=k}^{M_0} \sharp \mathcal E_{k'}. 
\end{equation}
Using the inequality \eqref{Equation:New1} under the form 
\[
\sharp \mathcal E_{k'} \leqslant 3^{r+1} \max_{F\in \mathcal F_{k'r}} (\mathcal A^\star(F))^{r+1},
\]
together with the hypothesis (2.3) we deduce 
\[
\sharp \mathcal E_{k'} \leqslant 3^{r+1}\, \exp \left(\mu k' r(r+1) /(\log (k'r))\right). 
\]
Combining with \eqref{Equation:sharpGd(m)} and the upper bound $\theta<\lambda/2$, we deduce 
the inequality
\begin{align*}
\sharp \, \mathcal G_{\geqslant d} (m) &\leqslant 4r \vert m \vert^{\theta/d} \cdot 
M_0 \cdot 3^{r+1}\, \exp \left(\mu M_0 r(r+1) /(\log (M_0r))\right) \\
& \leqslant c_1 \vert m\vert^{\lambda/(2d)}.
\end{align*} 
This completes the proof of the item (a).

The proof of the item (b) has similarities and works as follows. Write $d=kr$ and let $m$ be an element of $\mathcal R_{\geqslant d}(N)$.
It satisfies $\vert m \vert \leqslant N$ and it can be written as 
$m= F(x,y)$ for some $(x,y)$ such that $ \mathcal X \geqslant 2$, for some $k'\geqslant k$ and some $F\in \mathcal F_{k'r}$.
However the inequalities \eqref{Equation:M0} imply
\[
k'\leqslant M_1\quad \text{ and } \quad \max\{|x|,|y|\} \leqslant N^{\theta/(kr)}
\quad \text{ where } \quad 
M_1:=
\left\lfloor
 \frac{\theta \log N}{r\log 2} \right\rfloor.
\]
Thus we have
\[
\sharp \mathcal R_{\geqslant d} (N)\leqslant \left(1+2 N^{\theta/(kr)}\right)^2 \sum_{k'=k}^{M_1} \sharp \mathcal E_{k'},
\]
and the item (b) follows.
\end{proof}

%%%%%%%%%%%%%%%%%%%%%%%%%%%%%%%%%%%%%%%%%%%%%%%%%%%%%%%%%%%%%% 
 \subsection{Proof of Theorem \ref{Th:principal}}\label{Subsection:ProofPrincipal}
 
 Let $\epsilon >0$ be fixed, and $\lambda = 2+2\epsilon$. Let 
\[
\mu_0 := \frac{ \lambda -2}{C r^{4r} \lambda},
\]
and as mentioned above let 
\[
\eta := \epsilon (2Cr^{4r})^{-1}.
\]
The inequality $\eta < \mu_0$ allows to apply Theorem \ref{Th:majorationasymptotique} (a). Since we have $\lambda /(2d) < (1/d)+\epsilon$
we obtain the upper bound of $\mathcal G_{\geqslant d} (m)$ for $d\geqslant d_0$ as claimed in Theorem \ref{Th:principal} (a).
This completes the proof of Theorem \ref{Th:principal} (a).

We now prove the alinea (b) of Theorem \ref{Th:principal}. We separate its proof according to the size of $d$, 
compared with $d_0$ starting from which, the upper bound \eqref{Equation:majorationCalAeta} is true. 

\vskip .2cm
--- Assume $d \geqslant d_0$. Let us check condition (ii) in the Definition \ref{def:Regular}. Let $\epsilon_1>0$. For $d'>\lambda/\epsilon_1$
Theorem \ref{Th:majorationasymptotique} (b) yields 
 \[
 \sharp \mathcal R_{\geqslant d'} (N) \leqslant c_2N^{\lambda /d'}<N^{\epsilon_1}
 \]
 for sufficiently large $N$. Hence, applying Theorem 2.2 above, with $A=2$, (or \cite[Theorem 1.11]{FW2})
we obtain the alinea (b) in that case.

 \vskip.2cm
--- Assume $3\leqslant d < d_0$ we extend the above proof as follows to take into account the contribution of the forms of $\mathcal F$ with degree in the interval $[d, d_0-1]$. 
We start from the equality
\begin{equation}\label{Equation:18}
\sharp \mathcal R_{\geqslant d} (N) = X+O(Y),
\end{equation}
with
\begin{multline*}
X= \sharp \{ m : \vert m\vert \leqslant N,\, m =F(x,y) \text{ for some } (x,y,F) \\ \text{ with } \max \{ \vert x \vert, \vert y \vert \} \geqslant 2, \, F \in \mathcal F \text{ with } d\leqslant \deg F < d_0\},
\end{multline*}
and 
\[
Y =\sharp \mathcal R_{\geqslant d_0} (N).
\]
Let $d_1= (d_0-1)^\dag$ We have $ d_1\geqslant d_0$ and $d_1\geqslant d^\dag$. We trivially have $Y= \sharp \mathcal R_{\geqslant d_1}(N)$. By the above discussion, we have 
\begin{equation}\label{Equation:19}
Y = O( N^{2/d_1})= O( N^{2/d^\dag}).
\end{equation}
 To deal with $X$, we benefit from the fact that there are finitely many forms in the union $\bigcup_{d\leqslant d' < d_0}\mathcal F_{d'}$. Let $d_2= (d-1)^\dag$. If $d_2 >d$ the leading coefficient on the right--hand side of \eqref{Equation:calRdN} vanishes. We suppose that 
 $d_2 \leqslant d_0-1$ otherwise there is nothing to prove. Let $F\in \mathcal F_{d_2}$. Then \cite[Theorem 1.1]{SX} gives an asymptotic formula for 
\begin{equation}\label{Equation:20}
\sharp \{ m : \vert m \vert \leqslant N, \, m=F(x,y) \text{ for some } (x,y) \text{ with } \max \{ \vert x \vert, \vert y \vert \} \geqslant 2\}. 
\end{equation}
(also see \eqref{Equation:CFN2/d} above). 
If $G \in \mathcal F_{d_2}$, with $G\neq F$ (so $G$ is not $\Q$--isomorphic to $F$ by hypothesis), then \cite[Theorem 1.1]{FW2} gives an upper bound for
\begin{equation}\label{Equation:21}
\sharp \{ m : \vert m \vert \leqslant N, \, m=F(x,y) =G(u,v)\text{ for some } (x,y,u,v) \text{ with } \max \{ \vert x \vert, \vert y \vert \} \geqslant 2\}. \end{equation}
We then apply the inclusion--exclusion principle to give an asymptotic formula for 
\[
\sharp \{ m : \vert m \vert \leqslant N, \, m=F(x,y) \text{ for some } (x,y) \text{ with } \max \{ \vert x \vert, \vert y \vert \} \geqslant 2 \text{ and some } F \in \mathcal F_{d_2}\}.
\]
Since each set $\mathcal F_d$ is finite, we deduce from \eqref{Equation:CFN2/d} the bound 
\begin{equation}\label{Equation:22}
\begin{aligned}
\sharp \{ m : \vert m \vert \leqslant N, \, m=F(x,y) \text{ for some } (x,y) \text{ with } \max \{ \vert x \vert, \vert y \vert \} \geqslant 2\\ \text{ and some } F \in \bigcup_{d_2^\dag \leqslant \ell < d_0}\mathcal F_{\ell}\} =O(N^{2/d_2^\dag}). 
\end{aligned}
\end{equation}
By \eqref{Equation:18}, \eqref{Equation:19}, \eqref{Equation:20}, \eqref{Equation:21} and \eqref{Equation:22} we complete the proof of alinea (b) of Theorem \ref{Th:principal}.
 
 The item (c) of Theorem \ref{Th:principal} directly follows from the item (b) by a combination of the definition \eqref{defCF}, the equality \eqref{=1/2ou1/4} and the definition of $A^\star_F$.
 
 %%%%%%%%%%%%%%%%%%%%%%%%%%%%%%%%%%%%%%%%%%%%%%%%%%%%%%%%%%%%%% 
 \section{Examples of sets of $\C$ and $\Q$--homography--free sets}\label{Section:Examples}
 
 Our next task is to exhibit examples of families of binary fewnomials suited for an application of Theorem \ref{Th:principal} (c).
Recall that $\K = \Z,\, \Q, \R$ or $\C$. 
 %%%%%%%%%%%%%%%%%%%%%
\subsection{Old examples}%\label{oldexamples} 

Such examples were already given in previous papers of the authors through the following natural approach. Let $F$ and $G$ be two binary forms of $\Bin (d, \K)$ and suppose that we are interested in the 
$\gamma \in {\GL}(2, \K)$ such that $F\circ \gamma =G$. The study of these $\gamma$ is essentially equivalent to the study of the homographies $\mathfrak h$ with coefficients in $\K$ which exchange the complex roots $\rho$ of the polynomials
$f(t):= F(t,1)$ and $g(z):= G(z,1)$ (see Lemmas \ref{278} and \ref{iso<->auto} and Proposition \ref{Prop3.9} below). When $\K =\Q$, we studied the following cases 

 \vskip .2cm
\begin{itemize}
\item $F$ and $G$ are cyclotomic forms with the same degree, then the $\rho$ are primitive roots of unity (see \cite[Proposition 4.8 and Corrigendum]{FW0}), 

 \vskip .2cm
\item $F$ and $G$ are products of distinct irreducible quadratic forms of the shape $X^2+\alpha Y^2$ with $\alpha\in\Z$, then the roots $\rho$
are algebraic irrational numbers 
with degree two, so necessarily we have $\mathfrak h (\Q (\sqrt {-\alpha})) = \Q (\sqrt{- \alpha})$ (see \cite[Propositions 4.1 \& 5.1]{FW1} and \cite[Proposition 3.1]{FW2}), 

 \vskip .2cm
\item $F$ and $G$ are products of distinct linear factors of the shape $X-aY$, with $a\in \Q$, then we exploit the fact that $\mathfrak h$ preserves the cross ratios of any $4$--tuples of distinct $\rho$ (see \cite[Proposition 6.1]{FW1}).
\end{itemize}
Apart from products of binomial forms $(X^r+\alpha Y^r)$ (for suitable rational integers $\alpha$), the above examples do not seem to lead to interesting examples of binary fewnomials.

 %%%%%%%%%%%%%%%%%%%%%
\subsection{New examples} %\label{SS:NewExamples}

The landscape of Theorem \ref{Th:principal} is different since 
the information concerning the binary fewnomials is not of algebraic nature but it concerns the indices where the corresponding 
coefficients of the form vanish.
 Theorems \ref{486} and \ref{527} below are 
written in that sense. The proofs of these results are based on the above homographies $\mathfrak h$ and on the symmetric functions of the roots of a polynomial. 
The number and the indices of these zero coefficients are important. However, one can prove variations of our results by shifting the string of these zeroes. 
We will not investigate these possible extensions.

To state our result, we introduce the following conventions: Let $F(X,Y)$ be a binary form, not necessarily a binary fewnomial, written as
\begin{equation}\label{defF}
F(X,Y) =a_0 X^d+a_1 X^{d-1} Y + \cdots +a_d Y^d.
\end{equation}
We suppose that $a_0a_d \neq 0$. We define the two functions
\begin{equation}\label{defLambda}
\begin{cases}
\Lambda^+ (F) := \max\{ \ell : a_i= 0, 0 < i <\ell \},\\
\Lambda^- (F) := \min \{\ell : a_i=0, \ell < i < d\}.
\end{cases}
\end{equation}
They satisfy the properties 
\[
a_{\Lambda^\pm (F)} \neq 0,\ 1\leqslant \Lambda^+ (F) \leqslant \Lambda^- (F) \leqslant d 
\] 
and
\[
\Lambda^\mp (F) = \Lambda^\pm (F^{\rm rec}),
\] 
where 
$F^{\rm rec} (X,Y)$ is the {\em reciprocal binary form} defined by $F^{\rm rec} (X,Y) := F(Y,X)$. So 
we restrict ourselves to statements (for instance Theorem \ref{486} or \ref{527}) in terms of $\Lambda^+ (F)$ only.

The first result (Theorem \ref{486}) considers the case of binary forms where, for the very first positive values of $i$, the coefficients $a_i$ are equal to zero. The following definition is essential. 
\begin{definition}\label{Def:reduced}[Reduced set of binary forms]
 Let $\K$ be as above and let $d\geqslant 3$ be an integer. A set $\mathcal E$ of binary forms of degree $d$ with coefficients in $\K$ 
 is called {\em $\K$--reduced} 
 if it satisfies the three conditions 
\begin{enumerate}
\item for any $F$ in $\mathcal E $, we have $a_0a_d \not= 0$, 
\item the set $\mathcal E $ is $\K$--dilation--free, 
\item there is no pair $(F,G)$ of distinct binary binomial forms \eqref{Equation:binomialForms} of $\mathcal E$ and no pair $(u,v) \in (\mathbb C^\times)^2$ such that $F(vY,uX) =G(X,Y)$. 
\end{enumerate}
\end{definition}
If the set $\mathcal E$ is $\K$--reduced 
so does every subset of $\mathcal E$.
The item 3 is satisfied when $\mathcal E$ contains one binomial form at most. 
When $\K = \C$, the condition 3 is satisfied if and only if ${\mathcal E}$ contains at most one binomial form. 

We will prove in \S \ref{1033}:
\begin{theorem}\label{486}
 Let $d\geqslant 3$ and $\K$ as above. Let $\mathcal E$ be a $\K$--reduced set of binary forms. Assume that any $F\in\mathcal E$ satisfies
 \[ 
 \Lambda^+ (F) \geqslant \frac{d+3} 2 \cdotp
 \]
 Then the set $\mathcal E$ is $\K$--homography--free.
\end{theorem}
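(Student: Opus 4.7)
The plan is by contradiction: suppose $F,G\in\mathcal{E}$ are distinct with $F\circ\gamma=G$ for some $\gamma=\begin{pmatrix}u_1 & u_2\\ u_3 & u_4\end{pmatrix}\in\GL(2,\K)$, and contradict one of conditions (1)--(3) of Definition \ref{Def:reduced}. The identity $F\circ\gamma=G$ is recorded conveniently by the one-variable polynomial
\[
\phi(t):=F(u_1+tu_2,\,u_3+tu_4)=G(1,t),
\]
so the hypothesis $\Lambda^+(G)\geqslant (d+3)/2$ amounts to the flatness statement $\phi(t)=\phi(0)+O(t^{\lceil (d+3)/2\rceil})$. Using that $a_i=0$ for $1\leqslant i\leqslant \Lambda^+(F)-1$ we have the useful expansion
\[
\phi(t)=a_0(u_1+tu_2)^d+\sum_{i\geqslant \Lambda^+(F)} a_i(u_1+tu_2)^{d-i}(u_3+tu_4)^i.
\]

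First I dispose of the case $u_3=0$. Each term of the sum above then carries a factor $(u_3+tu_4)^i=(tu_4)^i$ divisible by $t^{\Lambda^+(F)}$, so the coefficient of $t$ in $\phi$ equals that of $a_0(u_1+tu_2)^d$, namely $a_0 d\, u_1^{d-1}u_2$. Its required vanishing, combined with $a_0\neq 0$ and $u_1\neq 0$ (forced by $u_3=0$ and $\det\gamma\neq 0$), gives $u_2=0$; so $\gamma$ is diagonal and $F(u_1 X, u_4 Y)=G(X,Y)$. Condition (2) of Definition \ref{Def:reduced} ($\K$--dilation--freeness) then forces $F=G$, contradicting $F\neq G$.

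The essential case is $u_3\neq 0$. Here the plan is to prove that $F$ (and hence $G$) must be a binomial $a_0 X^d+a_d Y^d$. A natural formulation uses the M\"obius viewpoint announced at the start of this section: setting $f(t)=F(t,1)$ and $g(t)=G(t,1)$ with roots $\rho_j$, $\sigma_j$, the identity $F\circ\gamma=G$ gives $\rho_j=\mathfrak h(\sigma_j)$ where $\mathfrak h(t)=(u_1t+u_2)/(u_3t+u_4)$. By Newton's identities the two flatness hypotheses become
\[
\sum_j \rho_j^k=0\ \text{ for }\ 1\leqslant k\leqslant \Lambda^+(F)-1,\qquad \sum_j\sigma_j^k=0\ \text{ for }\ 1\leqslant k\leqslant \Lambda^+(G)-1,
\]
a total of $\Lambda^+(F)+\Lambda^+(G)-2\geqslant d+1$ conditions linked by $\mathfrak h$, and I claim that for $u_3\neq 0$ this overdetermined system admits no solution unless $f$ is itself binomial. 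Once binomiality is granted, a short direct examination of the coefficients $b_j$ ($1\leqslant j\leqslant d-1$) forces $\gamma$ to be either diagonal (already handled) or anti-diagonal of the form $\begin{pmatrix}0&u_2\\u_3&0\end{pmatrix}$. In the latter case $G(X,Y)=F(u_2Y,u_3X)$ is exactly the relation $F(vY,uX)=G(X,Y)$ with $(u,v)=(u_3,u_2)\in(\K^\times)^2$ forbidden by condition (3) of Definition \ref{Def:reduced} for distinct binomial forms, giving the desired contradiction.

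The main obstacle is the binomiality step in the case $u_3\neq 0$. A concrete route is to examine the conditions $[t^k]\phi=0$ pairwise, using the explicit expansion of $\phi$ above: each is a polynomial identity in $(u_1,u_2,u_3,u_4)$, and the ratios of the coefficients of $u_2^a u_4^b$ obtained at two consecutive values of $k$ differ unless the corresponding coefficient $a_i$ of $F$ vanishes. Iterating this comparison, one removes the $a_i$ for $\Lambda^+(F)\leqslant i\leqslant d-1$ one after another, leaving $F$ binomial. The delicate part is treating the degenerate subcases cleanly (for example $u_1=0$, or strings of intermediate $a_i$'s that happen to already vanish), and this is where the technical work will lie.
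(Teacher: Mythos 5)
Your reduction has the right architecture, and it matches the paper's: the diagonal case ($u_3=0$, handled via condition (2) of Definition \ref{Def:reduced}) and the anti\-diagonal/binomial endgame (handled via condition (3)) are exactly how the paper concludes, and your treatment of $u_3=0$ via the vanishing of $b_1$ is correct. But the proposal has a genuine gap at its centre: the claim that for $u_3\neq 0$ the conditions force $F$ to be a binomial is precisely the hard content of the theorem --- it is Proposition \ref{central724}(2) in the paper, whose proof occupies all of the lemmas on the derivative identities $f^{(k)}(s)/f(s)=\frac{d!}{(d-k)!}(q/r)^k$ (Lemmas \ref{hugesystem}, \ref{872}, \ref{356bis}, \ref{896}), exploiting that for $k\geqslant d-\Lambda^+(f)+1$ the derivative $f^{(k)}$ is a monomial, and then ruling out successively $s\neq 0$, $q\neq 0$, and analysing $q=s=0$. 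You assert this step on the strength of a dimension count (``$\geqslant d+1$ conditions, overdetermined, hence no solution unless $f$ is binomial''), but such a count cannot by itself be a proof: the paper shows (Proposition \ref{central724}(3), and the example $f(t)=t^3+1$, $g(z)=z^3+3z$) that when $\Lambda^+(f)+\Lambda^+(g)=d+2$ there \emph{do} exist non-affine homographies with $(q,s)\neq(0,0)$ carrying $f$ to $g$, so any correct argument must use the full inequality $\Lambda^+(f)+\Lambda^+(g)\geqslant d+3$ in a quantitatively precise way, not merely ``more equations than unknowns''.

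Your proposed ``concrete route'' --- comparing, for consecutive $k$, the coefficients of $u_2^au_4^b$ in the identities $[t^k]\phi=0$ and arguing that the ratios force the $a_i$ to vanish one by one --- is not carried out, and you yourself flag that the degenerate subcases ($u_1=0$, accidental strings of vanishing $a_i$) are ``where the technical work will lie''; those are exactly the cases where such a term-by-term comparison is delicate (the identities are polynomial in four matrix entries, only three of which are free up to scaling, and the intermediate $a_i$ are unknowns as well). As it stands the argument establishes the theorem only modulo an unproved statement equivalent to Proposition \ref{central724}(2), so it is an outline rather than a proof; to complete it you would need either to reproduce the paper's analysis of $f^{(k)}(s)$ over the window $d-\Lambda^+(f)+1\leqslant k\leqslant \Lambda^+(g)-1$ (which contains two consecutive integers precisely because the sum is $\geqslant d+3$), or to supply a genuinely worked-out alternative to it.
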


The assumption $ \Lambda^+ (F) \geqslant (d+3)/2$ implies that when $d\in\{3,4\}$, the set $\mathcal E$ only contains binomial forms.

Theorem \ref{486} is quite general and the discussion in \S \ref{discussion} will show that the lower bound
$\Lambda^+ (F) \geqslant (d+3)/2$ is optimal.

%%%%%%%%%%%%%%%%%%%%%%%%%%%%%%%%%%%%%%

A different way to see the quasi--optimality of Theorem \ref{486} is the following Proposition, where we follow the notations \eqref{defF} and where we use basic concepts of linear algebra. We introduce the subset of $8$ matrices in $\GL(2,\K)$: 
\[ 
\mathcal G=
\left\{ \begin{pmatrix} \pm 1&0\\0&\pm 1\end{pmatrix}, \, \begin{pmatrix} 0&\pm 1\\\pm 1&0\end{pmatrix}\right\}  
\]
 We have
\begin{proposition} For every $d\geqslant 3$ there exists a binary form $F$ with degree $d$ and with integer coefficients, such that
\begin{itemize}
\item $a_1=a_2=\cdots a_{\lfloor d/2\rfloor}=0,$
\item there exists $\gamma\in {\rm GL}(2,\Z)\smallsetminus \mathcal G$, such that $F\circ \gamma =F$.
\end{itemize}

\end{proposition}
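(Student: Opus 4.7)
The plan is to exhibit, uniformly in $d$, a single involution $\gamma\in\GL(2,\Z)\smallsetminus\mathcal{G}$ and to build $F$ of degree $d$ inside the $\gamma$-invariant subspace by a dimension count. I would take
\[
\gamma=\begin{pmatrix}2&-1\\3&-2\end{pmatrix},
\]
which satisfies $\det\gamma=-1$, $\gamma^2=\Id$, and $\gamma\notin\mathcal{G}$ (by inspection, since every matrix in $\mathcal{G}$ has entries in $\{-1,0,1\}$). The $\pm 1$-eigenvectors $(1,1)$ and $(1,3)$ of $\gamma$ give rise to semi-invariant linear forms $\Phi_+:=3X-Y$ and $\Phi_-:=Y-X$, satisfying $\Phi_+\circ\gamma=\Phi_+$ and $\Phi_-\circ\gamma=-\Phi_-$ (a one-line verification). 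Writing $Q:=\Phi_-^2$, the $\Q$-vector space $V_d$ of degree-$d$ binary forms fixed by $\gamma$ admits the basis $\{\Phi_+^{d-2k}\,Q^k:0\leqslant k\leqslant\lfloor d/2\rfloor\}$, so $\dim_{\Q}V_d=\lfloor d/2\rfloor+1$.

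The form $F$ I seek would be the unique preimage of $(1,0,\dots,0)$ under the $\Q$-linear map
\[
\Psi_d:V_d\longrightarrow \Q^{\lfloor d/2\rfloor+1},\qquad F\longmapsto (a_0,a_1,\dots,a_{\lfloor d/2\rfloor}).
\]
Since source and target have equal dimension $\lfloor d/2\rfloor+1$, the single real obstacle is to show $\Psi_d$ is injective; everything else is routine. I expect to settle this by the following short coprimality argument, which is the heart of the proof. If $\Psi_d(F)=0$, then $Y^m$ divides $F$ for $m:=\lfloor d/2\rfloor+1$. Because $F\circ\gamma=F$ and $Y\circ\gamma=3X-2Y$, we also have $(3X-2Y)^m\mid F$. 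The linear forms $Y$ and $3X-2Y$ being coprime in $\C[X,Y]$, their $m$-th powers are coprime as well, so $Y^m(3X-2Y)^m$ divides $F$; but $\deg(Y^m(3X-2Y)^m)=2m\geqslant d+1$, which forces $F=0$.

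Once $\Psi_d$ is known to be an isomorphism, its preimage of $(1,0,\dots,0)$ provides an $F\in V_d$ with $a_0=1$, $a_1=\cdots=a_{\lfloor d/2\rfloor}=0$, and $F\circ\gamma=F$. Its coefficients are rational, and clearing denominators yields the required binary form with integer coefficients, of degree exactly $d$ (since $a_0\neq 0$), with the prescribed vanishing pattern, and admitting the non-trivial automorphism $\gamma\in\GL(2,\Z)\smallsetminus\mathcal{G}$.
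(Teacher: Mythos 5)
Your proof is correct, and while it lives in the same linear-algebra landscape as the paper's argument, its key step is genuinely different. The paper takes $\gamma=\xi S\xi^{-1}$ with $S$ the swap matrix and an arbitrary $\xi\in{\rm SL}(2,\Z)$ making $\gamma\notin\mathcal G$, identifies the fixed space of $\gamma^\dag$ in degree $d$ as $\xi^\dag\left(\mathcal S(d,\Q)\right)$ of dimension $\lfloor d/2\rfloor+1$, and obtains a nonzero form with $a_1=\cdots=a_{\lfloor d/2\rfloor}=0$ by a pure dimension count of the intersection with the codimension-$\lfloor d/2\rfloor$ subspace $\mathcal V(d,\lfloor d/2\rfloor,\Q)$; as the paper itself notes, this gives no control on $a_0$. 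You instead fix one explicit involution in ${\GL}(2,\Z)\smallsetminus\mathcal G$, exhibit the basis $\Phi_+^{d-2k}\Phi_-^{2k}$ of its fixed space via the semi-invariant linear forms, and prove that the truncation map $F\mapsto(a_0,\dots,a_{\lfloor d/2\rfloor})$ is injective, hence bijective, by the coprimality argument: if these coefficients vanish then $Y^{m}\mid F$ with $m=\lfloor d/2\rfloor+1$, invariance gives $(3X-2Y)^{m}\mid F$, and $2m\geqslant d+1$ forces $F=0$. That argument buys strictly more than the statement requires: you may prescribe $a_0=1$ (indeed all of $a_0,\dots,a_{\lfloor d/2\rfloor}$), so your form has $a_0\neq0$, whereas the paper's dimension count trades this control for the freedom of using any conjugate of $S$. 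All your verifications check out ($\gamma^2=\Id$, $\det\gamma=-1$, $\gamma\notin\mathcal G$, the identities $\Phi_\pm\circ\gamma=\pm\Phi_\pm$, and clearing denominators preserves $F\circ\gamma=F$); the only cosmetic slip is that the invariant linear forms correspond to the left (dual) eigenvectors rather than to the column eigenvectors $(1,1)$ and $(1,3)$ you cite, but since you verify $\Phi_\pm\circ\gamma=\pm\Phi_\pm$ directly this is immaterial.
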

\begin{proof}  
Let $S$ be the following matrix 
\[
S = \begin{pmatrix}
0&1\\
1&0\end{pmatrix}
\]
which is attached to the change of variables $(X,Y) \mapsto (Y,X)$.
 Let $\mathcal E(d, \Q)$ be the $\Q$--vector space 
gathering all the binary forms with degree $d$, rational coefficients together with the $0$--form.
It has dimension $d+1$. If $\xi $ belongs to ${\rm GL} (2, \Z)$ we denote by $\xi^\dag $ the automorphism of $\mathcal E(d, \Q)$ defined by
\[
\xi^\dag (F) = F \circ \xi^{-1} \ (F \in \mathcal E(d, \Q)).
\]
In particular we have the equality $(\xi \eta)^\dag = \xi^\dag \eta^\dag,$ for any $\xi$ and $\eta \in {\rm GL}(2,\Z)$.
Of particular importance is the vector subspace 
\[
\mathcal S(d,\Q) :=\{ F \in \mathcal E(d,\Q) : a_k = a_{d-k} \, (0\leqslant 2k\leqslant d)\}.
\]
It is the eigenspace (relative to the eigenvalue $1$) of the automorphism $S^\dag $ of $\mathcal E(d,\Q)$. A basis of $ \mathcal S(d,\Q) $ is given by 
\[
\{ X^kY^{d-k} + X^{d-k}Y^k: 0\leqslant 2k \leqslant d\}.
\]
We have the equality $\dim \mathcal S(d, \Q)= \lfloor d/2\rfloor+1$. Let $\xi$ be any element of ${\rm SL}(2,\Z)$ such that the element 
\[
\gamma :=\xi S \xi^{-1}
\]
does not belong to $\mathcal G$. Then $\xi^{\dag} \left( \mathcal S(d,\Q) \right) $ is the eigenspace of $\gamma^\dag$ relative to the eigenvalue $1$. It also has dimension $\lfloor d/2\rfloor +1$.

For $1\leqslant \kappa <d$, let $\mathcal V(d, \kappa, \Q)$ be the vector subspace
\[
\mathcal V(d, \kappa, \Q):=\{ F:a_1=\cdots=a_\kappa =0 \}.
\]
Its codimension is equal to $\kappa$. If one has the inequality
\[
(d+1-\kappa) + ( \lfloor d/2\rfloor +1)> d+1,
\]
which is equivalent to the inequality
\[%\begin{equation}\label{1035}
\kappa \leqslant \lfloor d/2\rfloor,
\]%\end{equation}
then the intersection of vector spaces $\mathcal V(d, \kappa, \Q) \cap \xi^\dag \left(\mathcal S(d,\Q)\right) $ contains an element $F$ different from $0$. Obviously, for this $F$, we have $a_1 = \cdots = a_\kappa=0$ and $ \gamma^\dag (F) = F\circ \gamma^{-1} = F$.
\\
Note that this construction is too general to ensure that the form $F$ has a discriminant $\neq 0$ and a first coefficient $a_0\neq 0$
\end{proof}
\subsubsection{Three illustrations.} 
$\bullet $ We now consider $d=3$ (so $\lfloor d/2\rfloor =1$) with the choices 
\[
\xi =\begin{pmatrix} 2& 1\\3& 1\end{pmatrix}, \xi^{-1} =\begin{pmatrix} -1& 1\\3& -2\end{pmatrix} \text{ and } \gamma = \xi S \xi^{-1} =\gamma^{-1}= \begin{pmatrix}5& -3\\ 8& -5\end{pmatrix}.
\]
Let $\phi (X,Y)$ be the cubic symmetric form
\[
\phi (X,Y):= 13(X^3+Y^3) +51 (X^2 Y +XY^2).
\]
Then we have
\[ F(X,Y) = 
\left(\xi^\dag (\phi)\right)(X,Y) = (\phi \circ \xi^{-1}) (X,Y) = 32 X^3 -30XY^2 +11 Y^3.
\]
We check the equality
$ \gamma^\dag (F) = F\circ \gamma^{-1} =F,$ since we have the equality
\begin{equation}\label{Equation:F(5X-3Y,8X-5Y)}
F(X,Y) = F(5X-3Y, 8X-5Y).
\end{equation}

\noindent
$\bullet$ The same equality \eqref{Equation:F(5X-3Y,8X-5Y)} holds for 
\[
F(X,Y)=256 X^4 -240 XY^3 +111Y^4
\]
using the quartic symmetric form 
\[
\phi (X,Y)=127 (X^4+Y^4) + 740 (X^3 Y+ X Y^3) + 1338 X^2 Y^2.
\]

\noindent
$\bullet$ We consider $d=10$ (so $\lfloor d/2\rfloor =5$) and we require the help of a computer. Consider the symmetrical form $\phi (X,Y)$ defined by
\[
\begin{gathered}
\phi (X, Y) =76 \, 210 \, 176 \, 793 \, \left (X^{10}+Y^{10}\right)
+872 \, 977 \, 899 \, 590 \, \left( X^9Y+XY^9\right)\\
+ 4 \, 381 \, 399 \, 953 \,765 \left( X^8Y^2+X^2 Y^8\right) 
+ 12\, 658\, 497\, 992\, 520\left( X^7Y^3 + X^3 Y^7\right)
\\
+23 \, 266 \, 629 \, 555 \, 330\left(X^6Y^4+X^4 Y^6\right) + 28\, 385\, 698\, 168\, 548\, X^5Y^5.
\end{gathered}
\]
Let 
\[
\xi = \begin{pmatrix} 1&2\\3&5 \end{pmatrix}, \xi^{-1} = \begin{pmatrix}-5 & 2 \\ 3&-1\end{pmatrix} \text{ and } \gamma = \xi S \xi^{-1} =\gamma^{-1}= \begin{pmatrix}-7& 3\\ -16& 7\end{pmatrix}
\]
 By a ${\rm GL}(2, \Z)$ change of variables, we have
\[
\begin{gathered}
F(X, Y) =\left(\xi^\dag(\phi) \right)(X,Y)=(\phi \circ \xi^{-1})(X,Y)=\phi (-5X+2Y, 3X-Y)\\
=-34 \, 359 \, 738 \, 368 \, X^{10}
+49 \, 565 \, 859 \, 840 \, X^4Y^6- 74 \, 095 \, 902 \, 720 \, X^3Y^7 
\\
+42 \, 402 \, 890 \, 880 \, X^2Y^8
- 10 \, 956 \, 131 \, 760 \, XY^9 + 1 \, 074 \, 852 \, 609 \, Y^{10}.
\end{gathered}
\]
We check the equality
$ \gamma^\dag (F) = F\circ \gamma^{-1} =F,$ since we have
\[
F(X,Y) = F(-7X+3Y, -16X+7Y).
\]
With the help of a computer, one checks that the form $F$ has a discriminant different from $0$ and that the coefficients are relatively prime.
 
 \subsubsection{A variation of Theorem \ref{486}} 
 
Theorem \ref{486} has the defect not to cover the case where $\Lambda^+ (F)$ is very close from $d/2$
(i.e, $2\Lambda^+(F) = d$, $d+1$ or $d+2$). For instance it does not apply to the balanced trinomial forms appearing in \eqref{balancedtrinomial}. In order to circumvent this failure, we introduce the following sets, where we impose to the forms to have
a string of (at least) four zero monomials located just after the monomial $a_{\Lambda^+ (F)} X^{d-\Lambda^+ (F)} Y^{\Lambda^+ (F)}$.
 
 We will prove in \S \ref{Proof527et2} the following analogue of Theorem \ref{486}.

\begin{theorem} \label{527} Let $d\geqslant 11$ and $\K$ as above. Let $\mathcal E$ be a $\K$--reduced subset of $\Bin (d, \K)$.
Assume that for each $F\in E$ we have 
\[
d/2\leqslant \Lambda^+ (F)\leqslant d-4, \; a_k =0 \text{ for } \Lambda^+ (F)+1 \leqslant k \leqslant \Lambda^+ (F) +4.
\]
and that if $d$ is even, then $\mathcal E $ contains no trinomial of the form
\begin{equation}\label{Equation:trinome}
a_0 X^d + a_{d/2} X^{d/2} Y^{d/2} + a_d Y^d. 
\end{equation}
Then $\mathcal E$ is $\K$--homography--free.
\end{theorem}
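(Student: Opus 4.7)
The plan is to mimic the strategy of Theorem \ref{486}, trading the stronger bound $\Lambda^+(F)\geqslant (d+3)/2$ there for the weaker bound $\Lambda^+(F)\geqslant d/2$ together with the four consecutive zeros placed immediately after position $\Lambda^+(F)$. Suppose for contradiction that there are distinct $F,G\in\mathcal E$ and $\gamma\in\GL(2,\K)$ with $F\circ\gamma=G$, and set $L=\Lambda^+(F)$, $L'=\Lambda^+(G)$. By the standard dictionary recalled via Lemmas \ref{278}, \ref{iso<->auto} and Proposition \ref{Prop3.9}, the matrix $\gamma$ induces a homography $\mathfrak h(z)=(u_1z+u_2)/(u_3z+u_4)$ sending the roots of $g(T)=G(T,1)$ bijectively onto the roots of $f(T)=F(T,1)$. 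Via Vieta's formulas, the vanishing of $a_1,\ldots,a_{L-1}$ and of $a_{L+1},\ldots,a_{L+4}$ becomes the vanishing of the corresponding elementary symmetric polynomials of the roots of $f$, and analogously for $g$.

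Next I would invoke Proposition \ref{central}, which should say that this abundance of vanishing symmetric functions, together with the homographic compatibility, forces $\gamma$ to be either a scalar multiple of a diagonal or of an antidiagonal matrix. The diagonal case $F(uX,vY)=G(X,Y)$ contradicts the $\K$--dilation--free hypothesis (condition 2 of $\K$--reduced) and yields $F=G$. For the antidiagonal case $F(uY,vX)=G(X,Y)$, the swap reverses the zero pattern of the coefficients. A short case analysis then finishes the argument: if $F$ had any nonzero coefficient at some position $i\in\{L+5,\ldots,d-1\}$, then by the swap $G$ would have a nonzero coefficient at $d-i\leqslant d-L-5\leqslant d/2-5<L'$, contradicting $b_{d-i}=0$. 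Hence $F$ (and by symmetry $G$) must be a trinomial supported on $\{0,L,d\}$ (resp.\ $\{0,L',d\}$), which forces $L'=d-L$; combined with $L,L'\geqslant d/2$ this yields $L=L'=d/2$, so $d$ is even and $F,G$ are trinomials of the form \eqref{Equation:trinome}, contradicting the last hypothesis. (The hypothesis $L\leqslant d-4$ incidentally prevents $F,G$ from being binomials, so condition 3 of $\K$--reduced plays no explicit role here.)

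The main obstacle is the correct application of Proposition \ref{central}: converting the vanishing of four consecutive elementary symmetric polynomials (on both the $f$--side and the $g$--side) into a genuine rigidity statement for the homography $\mathfrak h$ is the technical heart of Section \ref{Section:tools} and is not elementary. A secondary delicate point is the boundary analysis at $L$ close to $d/2$, where the reflected zero window $\{d-L-4,\ldots,d-L-1\}$ coming from the antidiagonal swap approaches the original window $\{L+1,\ldots,L+4\}$; the hypothesis $d\geqslant 11$ is precisely what guarantees enough room for the two windows to separate cleanly in the case analysis, while the explicit exclusion of the trinomials \eqref{Equation:trinome} disposes of the unique critical configuration $L=L'=d/2$ that the argument cannot otherwise rule out.
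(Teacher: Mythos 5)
Your proposal is correct and follows essentially the same route as the paper: reduce to the associated monic polynomials, apply Proposition \ref{central} to force the homography to be either a homothety (diagonal $\gamma$, settled by the $\K$--dilation--free condition) or $z\mapsto r/z$ (antidiagonal $\gamma$), and rule out the latter because $\Lambda^+(F),\Lambda^+(G)\geqslant d/2$ then force $d$ even and $F,G$ to be the excluded trinomials \eqref{Equation:trinome}. Your coefficient--swap analysis in the antidiagonal case merely re-derives what case 2 of Proposition \ref{central} (and the paper's remark preceding Theorem \ref{527}) already supply, so there is no substantive difference.
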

 
 The assumption $ \Lambda^+ (F)\leqslant d-4$ implies that the set $\mathcal E $ contains no binomial form. 
 
 The set $\mathcal E$ does not contain two forms $F$ and $G$ such that $F(vY,uX)=G(X,Y)$ with $u$ and $v$ in $\C^\times$: indeed, if two forms $F$ and $G$ related by such an equation satisfy $\Lambda^+ (F)\geqslant d/2$ and $\Lambda^+ (G)\geqslant d/2$ and are not binomial forms, then $d$ is even and $F$ and $G$ are trinomials of the form \eqref{Equation:trinome}.
 
 The assumption $d\geqslant 11$ is necessary in view of the other conditions of Theorem \ref{527}. For $d = 11$ the elements of $\mathcal E$ are of the form
\[
a_0X^{11}+a_6X^5Y^6+a_{11}Y^{11}
\]
with $a_0a_6a_{11}\not=0$. 
 
 %%%%%%%%%%%%%%%%%%%%%
\subsection{Examples of homography--free sets of binary forms}\label{SS:HomographyFreeSets}

We give examples where the assumptions of Theorem \ref{Th:principal} (b) and (c) are satisfied. 

%%%%%%%%%%
\subsubsection{Corollaries to Theorem \ref{486}} 

These corollaries concern two sets of binary forms.
The first one is:
\begin{equation}\label{defUd1}
\mathcal U^{(1)}_{d}(\K):= 
\left\{ F\in \Bin (d, \K) : a_0\neq 0, \; \Lambda^+ (F) \geqslant (d+3)/2, \; a_{d-1}= a_d =1 \right\}.
\end{equation}
 The condition $a_{d-1}\not=0$ implies $\Lambda^+ (F) \leqslant d-1$, hence if $\mathcal U^{(1)}_{d}(\K)$ is not empty then $d\geqslant 5$. Conversely, for $d\geqslant 5$, the set $\mathcal U^{(1)}_{d}(\K)$ is infinite. This is a consequence of the following lemma with $e=1$.

\begin{lemma}\label{Lemma:DiscriminantTrinomes} 
Let $a$, $d$ and $e$ be integers such that $a\not=0$, $1\leqslant e\leqslant d-1$. Then the discriminant of the binary form 
\[
F_a(X,Y):=aX^d+X^eY^{d-e} +Y^d
\]
is different from zero.
\end{lemma}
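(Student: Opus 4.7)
The plan is to reduce the vanishing of the discriminant to the existence of a multiple root of the dehomogenization $f(T):=F_a(T,1)=aT^d+T^e+1\in\C[T]$ and then derive a contradiction from the hypothesis $a\in\Z\setminus\{0\}$. Since $F_a(1,0)=a\neq 0$ and $F_a(0,1)=1$, neither $X$ nor $Y$ divides $F_a$, so every projective root of $F_a$ corresponds bijectively to an affine root of $f$. Hence the discriminant of $F_a$ is nonzero if and only if $f$ has no multiple root in $\C$.

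Suppose for contradiction that $T_0\in\C$ is a multiple root of $f$. Since $f(0)=1\neq 0$, we have $T_0\neq 0$. Compute
\[
f'(T)=daT^{d-1}+eT^{e-1}=T^{e-1}\bigl(daT^{d-e}+e\bigr),
\]
so $f'(T_0)=0$ gives $T_0^{d-e}=-e/(da)$. Substituting this into $f(T_0)=0$ via the identity $aT_0^{d}=T_0^{e}\cdot aT_0^{d-e}=-(e/d)T_0^{e}$ yields $((d-e)/d)T_0^{e}+1=0$, hence $T_0^{e}=-d/(d-e)$.

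Raising the two relations to the $e$-th and $(d-e)$-th powers respectively and equating the two expressions for $T_0^{e(d-e)}$, then taking absolute values, gives
\[
|a|^{e}\;=\;\frac{e^{e}(d-e)^{d-e}}{d^{d}}.
\]
I would then invoke the strict inequality $e^{e}(d-e)^{d-e}<d^{d}$, valid for $1\leqslant e\leqslant d-1$. This follows from the strict convexity of $x\mapsto x\log x$ on $(0,1]$: writing $p=e/d$, $q=(d-e)/d$, the function $g(p):=p\log p+(1-p)\log(1-p)$ satisfies $g(0)=g(1)=0$ and is strictly convex, hence $g(p)<0$ for $p\in(0,1)$. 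Multiplying by $d$ and exponentiating gives $(e/d)^{e}((d-e)/d)^{d-e}<1$, that is, $e^{e}(d-e)^{d-e}<d^{d}$. Consequently $|a|^{e}<1$, contradicting the assumption that $a$ is a nonzero integer.

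The only mildly delicate step is the numerical inequality $e^{e}(d-e)^{d-e}<d^{d}$: it is elementary (essentially the positivity of the binary entropy) but is the unique place where the hypothesis $1\leqslant e\leqslant d-1$ (excluding the degenerate endpoints $e\in\{0,d\}$) is used. Everything else is a straightforward manipulation of the simultaneous vanishing equations $f(T_0)=f'(T_0)=0$.
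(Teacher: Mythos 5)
Your proof is correct, and it diverges from the paper's at the decisive step. Up to the two relations $T_0^{d-e}=-e/(da)$ and $T_0^{e}=-d/(d-e)$, and the resulting identity $a^{e}=(-1)^{d}\,e^{e}(d-e)^{d-e}/d^{d}$, you and the paper do exactly the same computation. The paper then concludes arithmetically: it first reduces without loss of generality to $\gcd(e,d)=1$ (replacing $F_a$ by the form with exponents $e/g$, $d/g$), and then observes that with $e$ and $d$ coprime the fraction $e^{e}(d-e)^{d-e}/d^{d}$ cannot be an integer, so the equation has no solution $a\in\Z$. You instead conclude analytically: taking absolute values gives $|a|^{e}=e^{e}(d-e)^{d-e}/d^{d}<1$, contradicting $|a|\geqslant 1$. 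Your route has the advantage of needing no coprimality reduction and of showing the stronger fact that there is no solution with $|a|\geqslant 1$ real, not merely no integer solution; the paper's divisibility argument is shorter once coprimality is in place. One small simplification available to you: the inequality $e^{e}(d-e)^{d-e}<d^{d}$ does not require the convexity (entropy) argument, since $0<e/d<1$ and $0<(d-e)/d<1$ already give $(e/d)^{e}\bigl((d-e)/d\bigr)^{d-e}<1$ term by term.
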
 

The form $F_a$ has $\Lambda^+(F_a)=d-e$ and $d-1\geqslant (d+3)/2$ for $d\geqslant 5$. 

\begin{proof}
We need to prove that the polynomial 
\begin{equation}\label{Equation:392}
f_a (t) = a t^d +t^e+1
\end{equation} 
has no multiple root. Without loss of generality we may assume that $e$ and $d$ are coprime integers. 
Suppose that such a multiple root, that we call $\rho$, exists. It would satisfy $f_a(\rho)=a\rho^d + \rho^e +1= 0$ and $f'_a(\rho)= a d \rho^{d-1} +e\rho^{e-1}=0$, from which we deduce that 
\[
\rho^{d-e}=-\frac e {ad}
\quad\text{ 
and that 
}\quad
\rho^e= - \frac d{{d-e}}\cdotp
\]
However the equation 
\[
a^e=(-1)^d \frac{e^e(d-e)^{d-e}}{d^d}
\]
cannot hold with $a\in\Z$ and $e$ and $d$ coprime integers. 
\end{proof}

To define the second set we recall a classical definition: let $k\geqslant 2$ be an integer and let $x= a/b$ be a non zero rational number, written in its minimal form with $b>0$. We say that $x$ is {\em k--free } if there is no prime $p$ such that $p^k$ divides $ab$.

\noindent 
We introduce the following subset $\mathcal U_d^{(2)} (\Z)$ of $\Bin  (d, \Z)$ containing all the forms $F$ (written as in \eqref{defF}) such that
\begin{equation}\label{defUd2}
\begin{cases}
{\mathrm (a)}\;
a_0>0, \,a_d\not=0, \,
 \, (d+3)/2\leqslant \Lambda^+ (F) \leqslant d-1,\, 
\\
{\mathrm (b)}\;
 a_0 \text{ and } a_d\text{ are } d\text{--free},\\
 \text{(c) if there is an odd index } k \text{ such that } a_k \neq 0\\
 \qquad\text{ then for the smallest such } k \text{ we have } a_k >0.
\end{cases}
\end{equation} 
When $a_0$ is $d$--free, the form $F_{a_0} (X,Y) $ defined in Lemma \ref{Lemma:DiscriminantTrinomes} also belongs to $\mathcal U_d^{(2)} (\Z)$, thus this set is infinite.

We state the following corollary to Theorem \ref{486}. The proof is given in \S \ref{proof2.2}. 
 
\begin{corollary} \label{Corollary2star}
The following properties hold.
\begin{enumerate} 
\item For every $d\geqslant 5$ 
the infinite set $\mathcal U^{(1)}_{d} (\K)$
is a $\K$--homography--free set of binary forms.
\item For $d \geqslant 5$
the infinite set $\mathcal U^{(2)}_d (\Z)$ is a $\Q$--homography--free set of $\Q$--rigid binary forms.
\end{enumerate}
\end{corollary}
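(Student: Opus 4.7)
Both parts will be obtained by verifying the hypotheses of Theorem \ref{486}: that the set is $\K$--reduced in the sense of Definition \ref{Def:reduced}, and that every member $F$ satisfies $\Lambda^+(F) \geq (d+3)/2$. The latter condition is built into the definitions \eqref{defUd1} and \eqref{defUd2}. Infinity of both sets follows from Lemma \ref{Lemma:DiscriminantTrinomes} applied with $e=1$: the trinomials $aX^d + XY^{d-1} + Y^d$ have nonzero discriminant, belong to $\mathcal{U}^{(1)}_d(\K)$ for every $a\in\K^\times$, and belong to $\mathcal{U}^{(2)}_d(\Z)$ for every $d$--free positive integer $a$.

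\textbf{Checking $\K$--reduced.} Condition (1) of Definition \ref{Def:reduced} is immediate from the defining constraints $a_0,a_d \neq 0$. Condition (3) is vacuous because neither set contains any binomial form: the normalization $a_{d-1}=1$ precludes binomials in $\mathcal{U}^{(1)}_d(\K)$, while $\Lambda^+(F)\leq d-1$ does the same for $\mathcal{U}^{(2)}_d(\Z)$. The substantive clause is (2), the $\K$--dilation-freeness. Suppose $F(uX,vY)=G(X,Y)$ with $u,v\in\K^\times$, which translates to $a_i u^{d-i} v^i = b_i$ for $0\leq i\leq d$. In case (1), reading this at $i=d-1,d$ gives $v^d=1$ and $uv^{d-1}=1$, hence $u=v$ and $a_i u^{d-i} v^i = a_i v^d = a_i$, so $G=F$. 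In case (2), reading at $i=0$ and $i=d$ gives $u^d = b_0/a_0$ and $v^d = b_d/a_d$; writing $u,v$ in lowest terms and invoking the $d$--freeness of $a_0,b_0,|a_d|,|b_d|$ forces $u,v\in\{\pm 1\}$. The sign combinations with $u$ or $v$ equal to $-1$ are then eliminated one by one: positivity of $b_0$ rules out any sign change on $X$ when $d$ is odd, and condition (c) applied to both $F$ and $G$ rules out any sign change that flips odd-indexed coefficients (the smallest odd $k$ with $a_k\neq 0$ would give $b_k = -a_k < 0$, contradicting the corresponding condition for $G$); the only surviving subcases produce $F=G$ directly, possibly with $F$ a form with squared arguments.

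\textbf{Conclusion and rigidity.} Once $\K$--reducedness is in hand, Theorem \ref{486} yields the $\K$--homography-free conclusions in both parts. The remaining assertion, namely the $\Q$--rigidity of every $F\in\mathcal{U}^{(2)}_d(\Z)$, is the main obstacle: it concerns $\mathrm{Aut}(F,\Q)$ for each individual form $F$ and is not accessible from the homography-free property alone, which only constrains distinct pairs. My plan is to invoke the structural results on isomorphisms and automorphisms of sparse binary forms developed in \S\ref{Section:tools} --- in particular Propositions \ref{central724} and \ref{central} --- and to combine them with the sparsity $a_i=0$ for $1\leq i<(d+3)/2$, the $d$--freeness of $a_0$ and $a_d$, and the sign condition (c). The crux is to show that any $\gamma\in\mathrm{GL}(2,\Q)$ with $F\circ\gamma=F$ must be scalar (or lie in $\mathbf{D}_2$ precisely when all odd-indexed coefficients of $F$ vanish, i.e.\ when $F$ is a form with squared arguments), by translating the preservation of the many vanishing coefficients of $F$ into polynomial constraints on the entries of $\gamma$ that admit no other solutions under our hypotheses.
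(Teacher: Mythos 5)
Your treatment of the homography-free assertions is essentially the paper's: infinitude via Lemma \ref{Lemma:DiscriminantTrinomes}, exclusion of binomials (so item 3 of Definition \ref{Def:reduced} is vacuous), dilation-freeness of $\mathcal U^{(1)}_d(\K)$ from $a_{d-1}=a_d=1$ and of $\mathcal U^{(2)}_d(\Z)$ from $d$-freeness plus conditions (a) and (c), and then Theorem \ref{486}. These steps are correct and the paper packages them in exactly this way (Lemma \ref{Lemma:Wd1} and Lemma \ref{Lemma:rigid}(a)).

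The genuine gap is the $\Q$-rigidity of the forms in $\mathcal U^{(2)}_d(\Z)$: you correctly identify it as the remaining obstacle, but you only state a plan (``translating the preservation of the many vanishing coefficients of $F$ into polynomial constraints on the entries of $\gamma$''), and no argument is actually given. The point you are missing is that no new computation is needed: Proposition \ref{central724} applies verbatim with $f=g$, since $\Lambda^+(f)\geqslant (d+3)/2$ gives $2\Lambda^+(f)\geqslant d+3$, i.e.\ hypothesis \eqref{727}. Hence for any $\gamma\in\mathrm{Aut}(F,\Q)$ the homography $\tilde\gamma$ permuting $\mathcal Z(f)$ is either non-affine of the shape $\mathfrak h_{0,r,0}$ --- impossible, because that case forces $f(t)=t^d+\alpha_d$, i.e.\ $F$ a binomial, excluded by $\Lambda^+(F)\leqslant d-1$ --- or a homothety $\mathfrak h_{q,0}$, so that $\gamma$ is diagonal of the shape \eqref{t00u} with $u,v\in\Q^\times$. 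The diagonal case is then elementary (this is Lemma \ref{Lemma:rigid}(b), and here one does not even need $d$-freeness): $a_0u^d=a_0$ and $a_dv^d=a_d$ give $u,v\in\{\pm1\}$ with $u^d=v^d=1$, and if $u\neq v$ the sign condition (c) applied to $F$ and to $F(\pm X,\mp Y)=F$ forces all odd-indexed coefficients of $F$ to vanish, so that $F$ has squared arguments and $\gamma\in\mathbf D_2$; otherwise $\gamma=\pm\mathrm{Id}$. This yields exactly \eqref{299}, hence \eqref{=1/2ou1/4}. Note also that Proposition \ref{central}, which you propose to invoke, is irrelevant here; it serves Theorem \ref{527} and Corollary \ref{Corollary2}, not this corollary.
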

 
%%%%%%%%%%%%%%%%%%%%%%%
\subsubsection{Corollaries to Theorem \ref{527}}

We give new examples where the assumptions (b) and (c) of Theorem \ref{Th:principal} are satisfied.
 
Our next example is the set 
\begin{align}\label{defVd1}
\mathcal V^{(1)}_{d}(\K):= 
\bigl\{ F\in \Bin (d, \K) : &a_0\neq 0, \; d/2\leqslant \Lambda^+ (F)\leqslant d-6, \; a_{d-1}= a_d =1
\\
\notag
& 
a_{k}=0\; \text{ for } \; \Lambda^+ (F)+1\leqslant k \leqslant \Lambda^+ (F) +4
 \bigr\}.
\end{align}
 
We introduce the following subset $\mathcal V_d^{(2)} (\Z)$ of $\Bin  (d, \Z)$ containing all the forms $F$ (written as in \eqref{defF}) such that
\begin{equation}\label{defVd2}
\begin{cases}
{\mathrm (a)}\;
a_0>0, \; a_d\not=0, \;, d/2\leqslant \Lambda^+ (F)\leqslant d-5, \; 
 \\
 \qquad
 a_{k}=0 \text{ for } \Lambda^+ (F)+1\leqslant k \leqslant \Lambda^+ (F) +4,
\\
{\mathrm (b)}\;
a_0 \text{ and } a_d \text{ are $d$--free},
\\
{\mathrm (c)}\;
\text{if there is an odd index $k$ such that $a_k\not=0$}, 
\\
\qquad \text{then for the smallest such $k$ we have } a_k>0
 \\
{\mathrm(d)} \;
 \text {if $d$ is even, then $F$ is not a trinomial of the form}
 \\
 \qquad
a_0 X^d + a_{d/2} X^{d/2} Y^{d/2} + a_d Y^d. 
\end{cases}
\end{equation}

It is plain that the set $\mathcal V^{(1)}_{d } (\K)$ (resp. $\mathcal V^{(2)}_{d} (\Z)$)
is empty for $d<12$ (resp. for $d<11$). Let us check that for $d\geqslant 12 $ (resp. $d\geqslant 11 $) this set is infinite.

For $d=11$ the set $\mathcal V^{(2)}_{11} (\Z)$ contains the forms
\[
aX^{11}+X^6Y^5+Y^{11}, \qquad a>0 \quad \text{$d$--free},
\]
 as shown by Lemma \ref{Lemma:DiscriminantTrinomes}. 
 
 For $d\geqslant 12$ and $a\in\Z\smallsetminus\{0\}$, consider the binary forms
\[
G_a(X,Y) = X^d + a X^{d-\nu} Y^\nu + XY^{d-1} +Y^d,
\]
where $\nu=\nu_d=\lfloor (d+1)/2\rfloor$.
The next result shows that for $|a|$ sufficiently large $G_a(X,Y) $ belongs to both $\mathcal V^{(1)}_{d } (\K)$ and $\mathcal V^{(2)}_{d} (\Z)$.

\begin{lemma} 
For any $d\geqslant 12$, there exists a constant $A_d$ such that 
\[
\sharp \{ a \in \Z\smallsetminus\{0\} : G_a \notin \Bin  (d, \Z)\} \leqslant A_d.
\]
\end{lemma}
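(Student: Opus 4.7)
The plan is to reduce the assertion to showing that a certain polynomial in $a$ is not identically zero, and then use its degree to bound the number of exceptional $a$. Since $G_a$ is monic in $X$, it automatically has degree $d$ and integer coefficients, so $G_a\in\Bin(d,\Z)$ is equivalent to $\mathrm{disc}(G_a)\ne 0$. Dehomogenizing, consider
\[
g_a(T):=G_a(T,1)=T^d+a\,T^{d-\nu}+T+1 \in\Z[T],
\]
which is also monic; hence $\mathrm{disc}(G_a)=\mathrm{disc}(g_a)$. Set $P(a):=\mathrm{disc}(g_a)\in\Z[a]$. Because the discriminant of a monic degree-$d$ polynomial is a polynomial in its coefficients of total (unweighted) degree $2(d-1)$, and $a$ appears linearly in exactly one coefficient of $g_a$, the polynomial $P$ has degree at most $2(d-1)$ in $a$.

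Thus it suffices to show that $P\not\equiv 0$; the lemma then follows with $A_d:=2(d-1)$, since the number of integer roots of $P$ is bounded by $\deg P$. To prove $P\not\equiv 0$, I would evaluate at $a=0$ and verify that $g_0(T)=T^d+T+1$ has only simple roots. Assuming $\rho$ were a double root gives both $\rho^d=-\rho-1$ and $d\rho^{d-1}=-1$. Multiplying the latter by $\rho$ and comparing with the former yields $\rho=-d/(d-1)$. Substituting back into $\rho^{d-1}=-1/d$, one obtains
\[
(-1)^{d-1}\,\frac{d^{d-1}}{(d-1)^{d-1}}=-\frac{1}{d}.
\]
For odd $d$, the left side is positive while the right side is negative, a contradiction; for even $d$, one is led to $d^d=(d-1)^{d-1}$, which fails for every $d\geqslant 2$. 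Hence $g_0$ is separable, $P(0)\ne 0$, and $P$ is not the zero polynomial.

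The whole argument is elementary, so I do not anticipate a serious obstacle. The only point requiring care is the parity case analysis ruling out a double root of $T^d+T+1$, which the computation above settles directly. All remaining bookkeeping (the identification $\mathrm{disc}(G_a)=\mathrm{disc}(g_a)$ for a form monic in $X$, and the degree bound for $P$ in $a$) is standard, and the bound $A_d=2(d-1)$ is explicit even if not sharp.
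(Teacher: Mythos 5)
Your proof is correct and follows essentially the same route as the paper: regard the discriminant of $G_a(T,1)$ as a polynomial in $a$ and show it is not identically zero by checking $a=0$, where $T^d+T+1$ is separable. The only differences are cosmetic — the paper obtains the nonvanishing at $a=0$ by citing Lemma \ref{Lemma:DiscriminantTrinomes} with $a=1$, $e=1$ instead of redoing the double-root computation, while you add an explicit bound $A_d=2(d-1)$ via the degree of the discriminant, which the paper does not need.
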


\begin{proof} 
The discriminant of the polynomial $G_a (X,1)$ is equal to $D(a)$ where $D$ is a polynomial in $\Z[T]$. 
 The discriminant of the polynomial $f_1(x)$ introduced in \eqref{Equation:392} is $D(0)$ when $e=1$.
Since $f_1$ has no multiple roots, we have $D(0)\neq 0$ and the polynomial $D$ has only finitely many roots.
\end{proof} 
 
\begin{remark} 
One can check that $A_{12}=0$, hence the sets $\mathcal V^{(1)}_{12 } (\Z)$ and $\mathcal V^{(2)}_{12 } (\Z)$ contain all the quadrinomials which are of the form 
\[
X^{12}+aX^6Y^6+XY^{11}+Y^{12} \quad (a\in\Z).
\] 
Indeed, the discriminant of the polynomial $T^{12}+aT^6+T+1$ is a polynomial of degree $12$ in $a$, with integer coefficients; using a computer, we check that it has no integral solution. 
 \end{remark}
 
 \medskip
Here is a corollary to Theorem \ref{527}. The proof is given in \S \ref{Proof527et2}.

\begin{corollary}\label{Corollary2} For $d \geqslant 12$, we have 
\begin{enumerate}
\item The infinite set $\mathcal V^{(1)}_{d} (\K)$ 
is a $\K$--homography--free set of binary forms.
\item
The infinite set $\mathcal V^{(2)}_{d} (\Z)$ is a $\Q$--homography--free set of $\Q$--rigid binary forms.
\end{enumerate}
\end{corollary}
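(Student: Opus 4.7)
The plan is to reduce Corollary \ref{Corollary2} to Theorem \ref{527} by checking that both $\mathcal V^{(1)}_d(\K)$ and $\mathcal V^{(2)}_d(\Z)$ are $\K$--reduced (resp.\ $\Q$--reduced) in the sense of Definition \ref{Def:reduced}, and satisfy the remaining structural hypotheses of that theorem; the $\Q$--rigidity claim in (2) then follows from a further coefficient analysis together with the automorphism classification provided by Proposition \ref{central}.

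First I would verify the three conditions of Definition \ref{Def:reduced}. The condition $a_0 a_d \neq 0$ is built into both \eqref{defVd1} and \eqref{defVd2}, and since $\Lambda^+(F) \leqslant d-5$ together with $a_0, a_d \ne 0$ forces the three monomials of indices $0, \Lambda^+(F)$ and $d$ to be non-zero, neither set contains a binomial form, so condition (3) is vacuous. Dilation-freeness is the substantive point: if $F(uX,vY)=G(X,Y)$ with $F, G$ in the set and $u,v\in\K^\times$, one uses the relation $a_i(G)=a_i(F)\,u^{d-i}v^i$. For $\mathcal V^{(1)}_d(\K)$, the normalisations $a_d=a_{d-1}=1$ force $v^d=1$ and $uv^{d-1}=1$, hence $u=v$ and therefore $a_i(G)=a_i(F)v^d=a_i(F)$ for every $i$. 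For $\mathcal V^{(2)}_d(\Z)$, writing $u=p/q$ in lowest terms, the identity $u^d=a_0(G)/a_0(F)$ combined with the $d$-freeness of $a_0(F)$ and $a_0(G)$ forces $u\in\{\pm 1\}$, and similarly $v\in\{\pm 1\}$ from the $d$-freeness of $a_d$; a short case analysis on the parities of $d$ and of the indices, using condition (c) on the smallest odd $k$ with $a_k\ne 0$, then rules out any non-trivial sign flip and yields $F=G$.

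Next, the remaining hypotheses of Theorem \ref{527}---the interval $d/2\leqslant \Lambda^+(F)\leqslant d-4$ and the block of zeros $a_k=0$ for $\Lambda^+(F)+1\leqslant k\leqslant \Lambda^+(F)+4$---are imposed directly in both definitions. The excluded trinomial $a_0X^d+a_{d/2}X^{d/2}Y^{d/2}+a_dY^d$ for even $d$ is ruled out in $\mathcal V^{(1)}_d(\K)$ by $a_{d-1}=1$ and in $\mathcal V^{(2)}_d(\Z)$ by condition (d). Theorem \ref{527} then delivers the homography-free conclusion in both (1) and (2). For the $\Q$--rigidity claim of (2), I would let $F\in\mathcal V^{(2)}_d(\Z)$ and $\gamma\in\Aut(F,\Q)$; Proposition \ref{central} (or \ref{central724}) reduces $\gamma$ to a diagonal or anti-diagonal matrix, after which a repetition of the $d$-freeness plus sign argument used in the dilation-free step pins down $\gamma$ inside $\{\lambda\,\Id:\lambda\in\Q,\;\lambda^d=1\}$, or inside $\mathbf{D}_2$ in the exceptional case where $F$ is a form with squared arguments.

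The main obstacle is the dilation-free verification for $\mathcal V^{(2)}_d(\Z)$: once $d$-freeness cuts $u,v$ down to $\{\pm 1\}$, one must treat separately the parity of $d$ and the existence (or not) of a non-zero coefficient at an odd index in order to use condition (c) effectively. The very same parity bookkeeping reappears in the rigidity step, so organising it once cleanly (say by recording for which parities $u=\pm v$ or $u=v$ are compatible with the coefficient comparison) will serve both parts of the argument.
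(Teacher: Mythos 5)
Your proposal is correct and follows essentially the same route as the paper: the paper also reduces both items to Theorem \ref{527} after checking the reducedness/dilation-free hypotheses and the exclusion of the trinomial \eqref{Equation:trinome}, with the coefficient computations you write out inline ($a_{d-1}=a_d=1$ forcing $u=v$, $u^d=1$; $d$--freeness plus condition (c) forcing $u,v\in\{\pm1\}$ and handling the squared--arguments case) packaged in its Lemmas \ref{Lemma:Wd1} and \ref{Lemma:rigid}, and the rigidity step likewise done by combining Proposition \ref{central} (which, thanks to condition (d), leaves only homotheties, hence diagonal $\gamma$) with Lemma \ref{Lemma:rigid}(b), exactly as in the proof of Corollary \ref{Corollary2star}.
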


\begin{remark}% \label{Remark:artificial}
The conditions concerning $a_{d-1}$ and $a_d$ (in \eqref{defUd1} and in \eqref{defVd1}) and $a_0$, $a_k$ and $a_d$ (in \eqref{defUd2} and in \eqref{defVd2}), 
may appear artificial. They are introduced to eliminate possible homotheties between forms and can be replaced by other types of conditions.
\end{remark}

 %%%%%%%%%%%%%%%%%%%%%%%%%%%%%%%%%%%%%%%%%%%%%%%%%%%%%%%%%%%%%% 
 \section{Homographies between two binary forms } \label{Section:tools}
 
For the proofs of Theorems \ref{486} (in section \ref{1033}) and \ref{527} (in section \ref{S:Proof527}), in order to check the hypotheses arising from Definition \ref{definition:dilation homograph free}, we need to study the homographies between two binary forms and the automorphisms of a binary form.

 %%%%%%%%%%%%%%%%%%%%%
\subsection{Zeroes of binary forms}%\label{SS:Zeroes of binary forms}

 %%%%%%%%%%%%%%%%%%%%%
\subsubsection{Case of one form.}\label{3.1.1}

 In this section $\K$ is the field $\Q$, $\R$ or $\C$. To the element 
\begin{equation}\label{a1a2a3a4}
\gamma = \begin{pmatrix} u_1 & u_2 \\ u_3& u_4\end{pmatrix} \in {\GL}(2, \K)
\end{equation}
we associate the {\em homography } $\tilde \gamma$ of $\mathbb P^1(\K)$ defined by the formula 
\[
\tilde \gamma (x:t) = (u_1x+u_2t : u_3 x+u_4t),
\]
where we denote by $(x : t)$ the generic element of $\mathbb P^1 (\K)$. 
Recall that for $\gamma_1$ and $\gamma_2$ in $ {\GL}(2, \K)$ we have the equivalence 
\begin{equation}\label{iff}
\tilde \gamma_1 =\tilde \gamma_2 \iff \bigl(\text { there exists } \lambda \in \K^\times \text{ such that } \gamma_1= \lambda \gamma_2\ \bigr)
\end{equation}
and the formula
\begin{equation}\label{tildetilde}
\widetilde{\gamma_1\,\gamma_2} = \tilde {\gamma_1} \circ \tilde{\gamma_2}
\end{equation}
 
Let $F$ be a form $\Bin  (d,\C)$ and ${\mathcal Z}(F)$ the set of zeroes of $F$ in $\PP^1 (\C)$. By definition, it is the set of classes $(x:t)$ 
of pairs $(x,t)\in \C^2\smallsetminus \{(0,0)\}$ modulo the homotheties such that $F(x,t) =0$. By assumption the cardinality of ${\mathcal Z} (F)$ is $d$ and for $\gamma\in {\GL}(2, \K)$ we have the conjugation equality 
\begin{equation}\label{Z (fcircgamma)}
\mathcal Z (F\circ \gamma) = \tilde \gamma^{-1} \bigl( \mathcal Z (F)\bigr),
\end{equation} 
which follows from the relations 
\[
(x:t) \in {\mathcal Z} (F\circ \gamma)
\iff
\left( F\circ \gamma\right) (x:t)=0 \iff \tilde \gamma (x:t) \in {\mathcal Z} (F)
\iff (x:t) \in \tilde \gamma^{-1}\left( {\mathcal Z}(F)\right).
\]
In particular for $\gamma \in {\rm Aut} (F,\K)$, we have 
\begin{equation}\label{281}
\mathcal Z (F) =\tilde \gamma \bigl( \mathcal Z (F)).
\end{equation}
Thus for $\gamma\in {\rm Aut} (F,\K)$ the restriction of $\tilde \gamma$ to $\mathcal Z (F)$, denoted by $\tilde \gamma_{\vert \mathcal Z (F)}$, is a bijection of $\mathcal Z (F)$. If $E$ is a subset of $\mathbb P^1 (\C)$, 
let ${\rm Aut}( E, \K)$ be the set of bijections $\phi : E\rightarrow E$ such that there exists $\gamma \in {\GL} (2,\K)$ 
with the property $\tilde \gamma_{\vert E} = \phi$. In particular for $\gamma \in {\GL}(2, \K)$ one has the equality 
\begin{equation*}
{\rm Aut} (\mathcal Z (F\circ \gamma), \K) = (\tilde \gamma_{\vert \mathcal Z(F)})^{-1} {\rm Aut } \left( \mathcal Z (F), \K\right) \tilde \gamma_{\vert \mathcal Z (F)}.
\end{equation*}
 We have 
 
\begin{lemma} \label{278}
Let $F \in \Bin (d, \K)$ ($d\geqslant 3$). The \, $\tilde{}$--map
 which transforms $\gamma\in {\GL}(2, \K)$ in the homography $\tilde \gamma$ on $\PP^1 (\C)$ induces a homomorphism 
 \[
\begin{matrix}
\Psi & : &{\rm Aut} (F, \K) &\longrightarrow & {\rm Aut} (\mathcal Z (F), \K)\\
& & \gamma& \mapsto &\Psi (\gamma) = \tilde \gamma_{\vert \mathcal Z (F)}.
\end{matrix}
\]
We have 
\[
 \ker \Psi =\{ \zeta\, {\rm Id}: \zeta \in \K, \ \zeta^d =1\}.
 \]
 Finally when $\K =\C$ the map $\Psi $ is surjective.

\end{lemma}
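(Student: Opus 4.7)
My plan is to check three things in turn: that $\Psi$ is well-defined and is a group homomorphism; that its kernel consists exactly of the $d$-th roots of unity times $\mathrm{Id}$; and finally, over $\C$, that it is surjective. For well-definedness: if $\gamma \in \mathrm{Aut}(F, \K)$ then \eqref{281} gives $\tilde\gamma(\mathcal{Z}(F)) = \mathcal{Z}(F)$, so the restriction $\tilde\gamma|_{\mathcal{Z}(F)}$ is a bijection of $\mathcal{Z}(F)$ which, by construction, is restricted from a homography coming from a matrix in $\GL(2, \K)$; this is precisely the condition to lie in $\mathrm{Aut}(\mathcal{Z}(F), \K)$. The formula \eqref{tildetilde} then yields $\Psi(\gamma_1 \gamma_2) = \Psi(\gamma_1) \circ \Psi(\gamma_2)$ on restriction.

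For the kernel: $\gamma \in \ker \Psi$ means that $\tilde\gamma$ fixes each of the $d \geqslant 3$ distinct points of $\mathcal{Z}(F) \subset \PP^1(\C)$. Since any non-identity homography of $\PP^1(\C)$ has at most two fixed points, $\tilde\gamma$ equals the identity homography. By \eqref{iff} this forces $\gamma = \lambda\,\mathrm{Id}$ for some $\lambda \in \K^\times$, and the condition $F \circ \gamma = F$ then reads $\lambda^d F = F$, i.e.\ $\lambda^d = 1$. The reverse inclusion is immediate, as every scalar matrix $\lambda\,\mathrm{Id}$ with $\lambda^d = 1$ belongs to $\mathrm{Aut}(F, \K)$ and induces the identity on $\PP^1$.

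For surjectivity when $\K = \C$: given $\phi \in \mathrm{Aut}(\mathcal{Z}(F), \C)$, pick $\gamma \in \GL(2, \C)$ with $\tilde\gamma|_{\mathcal{Z}(F)} = \phi$, so in particular $\tilde\gamma(\mathcal{Z}(F)) = \mathcal{Z}(F)$. Applying \eqref{Z (fcircgamma)} gives $\mathcal{Z}(F \circ \gamma) = \tilde\gamma^{-1}(\mathcal{Z}(F)) = \mathcal{Z}(F)$. The two forms $F$ and $F \circ \gamma$ both have degree $d$ and the same $d$ simple zeros in $\PP^1(\C)$, hence they are proportional: $F \circ \gamma = \mu F$ for some $\mu \in \C^\times$. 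Since $\C$ is algebraically closed, choose $\nu \in \C^\times$ with $\nu^d = 1/\mu$ and set $\gamma' := \nu\gamma$. Homogeneity of degree $d$ yields $F \circ \gamma' = \nu^d (F \circ \gamma) = \nu^d \mu F = F$, so $\gamma' \in \mathrm{Aut}(F, \C)$; and \eqref{iff} gives $\tilde{\gamma'} = \tilde\gamma$, hence $\Psi(\gamma') = \phi$.

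The argument is entirely formal, with no real obstacle. The single conceptual point to keep in mind is that $\gamma \mapsto \tilde\gamma$ has as kernel the scalar matrices, which is why the surjectivity step requires the rescaling $\gamma' = \nu\gamma$ to kill the proportionality constant $\mu$; this rescaling is exactly where algebraic closedness of $\C$ is used, and the reason $\Psi$ need not be surjective over $\Q$ or $\R$.
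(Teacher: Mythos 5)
Your proof is correct and follows essentially the same route as the paper: well-definedness and the homomorphism property via \eqref{281} and \eqref{tildetilde}, the kernel via the fact that a homography fixing the $d\geqslant 3$ distinct points of $\mathcal Z(F)$ is the identity together with \eqref{iff} and homogeneity, and surjectivity over $\C$ by noting $F$ and $F\circ\gamma$ share the same simple zeros, hence are proportional, and rescaling by a $d$-th root of the proportionality constant. No gaps; the only cosmetic difference is that you spell out the "at most two fixed points" argument and the reverse inclusion for the kernel, which the paper leaves implicit.
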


\begin{proof} The existence and unicitiy of $\Psi$ follows from \eqref{281}.
As a consequence of \eqref{tildetilde}, for any $\gamma_1$ and $\gamma_2 \in 
{\rm Aut}(F, \K)$, we have $\Psi (\gamma_1\gamma_2 ) =\Psi (\gamma_1)\circ\Psi (\gamma_2)$. The determination of $ \ker \Psi$ comes down to finding the matrices
 $\gamma \in {\rm Aut}( F, \K)$ such that $ \tilde \gamma$ fixes every point of $\mathcal Z (F)$. Since $\mathcal Z (F)$ contains $d$ points and since $d\geqslant 3$, we deduce the equality $\tilde \gamma ={\rm Id}$. By \eqref{iff}, the automorphism $\gamma$ is a homothety of the shape $\zeta\, {\rm Id}$, with $\zeta \in \K^\times$. The equality $F\circ \gamma = \zeta^d F$ restricts the possible values of $\zeta$ by the equality $\zeta^d =1$. 
 
 We now suppose that $\K =\C$ to prove that $\Psi$ is surjective.
So let $F\in \Bin  (d,\C)$ and let $\xi \in {\GL}(2,\C)$ such that $\tilde \xi_{\vert {\mathcal Z}(F)}$ belongs to ${\rm Aut} ({\mathcal Z }(F), \C)$. We want to prove the existence of some $\gamma \in {\rm Aut}(F, \C)$ such that $\tilde \gamma =\xi$. By \eqref{Z (fcircgamma)}, we have the equality ${\mathcal Z} (F) = {\mathcal Z}(F\circ \xi)$.
This implies that the forms $F$ and $F\circ \xi$ are proportional since they have the same zeroes with
the same multiplicities. For some $\alpha \in \C^\times$ we have $F\circ \xi = \alpha F$. It remains to put $\gamma := \lambda \xi$, where $\lambda \in \C $ satisfies $\lambda^{-d }= \alpha$ to obtain the equalities 
$\tilde \gamma = \tilde \xi$ and $F\circ \gamma = F$.
\end{proof}
\begin{remark}
\label{remarque3} 
 When $\K= \Q$ or $\R$, the morphism $\Psi$ in Lemma \ref{278}, is not surjective generally speaking as one sees in the following example. Let $a$ and $b$ be two distinct positive real numbers. Let $ F(X,Y)$ in $\Bin (4, \R)$ defined by 
 \[
F(X,Y) := (X-aY)(X+Y/a)(X-bY)(X+Y/b).
\]
We then have 
\[
\mathcal Z (F)= \bigl\{ (a:1), \, (-1/a:1),\, (b:1),\, (-1/b: 1)
\bigr\}. 
\]
Let $\xi : \PP^1 (\C) \rightarrow \PP^1 (\C)$ be the homography defined by $\xi( z):= -1/z$. We check that $\xi_{\vert \mathcal Z (F)}$ belongs to ${\rm Aut}(\mathcal Z (F), \R)$. Searching for $\gamma \in {\rm Aut}(F, \R)$ such that $\tilde \gamma = \xi$ is equivalent to searching for $\lambda \in \R^\times$ such that 
\[
\gamma =
\begin{pmatrix}
0 &-\lambda\\
\lambda & 0
\end{pmatrix},
\]
satisfies $(F\circ \gamma)\, (X,Y) =F(X,Y)$. Such a $\lambda\in \R$ does not exist, since a direct computation leads to the equality $(F\circ \gamma) (X, Y) =- \lambda^4 \cdot F (X,Y)$.
\end{remark}
Lemma \ref{278} has the following consequence.
\begin{lemma}\label{278ter} 
Let $d\geqslant 4$ and let $F\in \Bin  (d, \Q)$. Suppose that 
${\rm Aut }({\mathcal Z}(F), \C) = \{{\rm Id}\}$. Then we have 
\[ 
{\rm Aut}(F, \Q) 
 = \begin{cases} \left\{ {\rm Id}\right\} &\text{ if } 2 \nmid d \\ \left\{ \pm {\rm Id}\right\} &\text{ if } 2 \mid d.
 \end{cases}
\]
\end{lemma}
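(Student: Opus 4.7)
The plan is to apply Lemma \ref{278} with $\K=\Q$, which (since $d\geqslant 4\geqslant 3$) furnishes a group homomorphism
\[
\Psi \colon {\rm Aut}(F,\Q) \longrightarrow {\rm Aut}(\mathcal Z(F),\Q)
\]
whose kernel is $\{\zeta\,{\rm Id} : \zeta \in \Q,\ \zeta^d = 1\}$. The aim is to show both that $\Psi$ is trivial and that its kernel has the expected shape.

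The key observation is the inclusion ${\rm Aut}(\mathcal Z(F),\Q) \subseteq {\rm Aut}(\mathcal Z(F),\C)$. Indeed, any bijection in the former is realized by some $\gamma \in \GL(2,\Q)$, and since $\GL(2,\Q) \subset \GL(2,\C)$, the restriction $\tilde \gamma_{|\mathcal Z(F)}$ also witnesses membership in ${\rm Aut}(\mathcal Z(F),\C)$. The hypothesis ${\rm Aut}(\mathcal Z(F),\C) = \{{\rm Id}\}$ then forces ${\rm Aut}(\mathcal Z(F),\Q) = \{{\rm Id}\}$, so $\Psi$ is the trivial morphism. Consequently ${\rm Aut}(F,\Q) = \ker \Psi$.

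It remains to enumerate the rational $d$-th roots of unity. If $d$ is odd, the only rational $\zeta$ with $\zeta^d=1$ is $\zeta=1$, giving $\ker \Psi = \{{\rm Id}\}$; if $d$ is even, the rational solutions are $\zeta= \pm 1$, giving $\ker \Psi = \{\pm {\rm Id}\}$. This matches the stated dichotomy. There is no genuine obstacle here: the lemma is essentially a direct corollary of Lemma \ref{278} combined with the elementary observation that $\Q$-rationality of the homography is stronger than $\C$-rationality.
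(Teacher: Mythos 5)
Your proof is correct and follows essentially the same route as the paper: both reduce to Lemma \ref{278} via the inclusion ${\rm Aut}(\mathcal Z(F),\Q)\subseteq {\rm Aut}(\mathcal Z(F),\C)$, identify ${\rm Aut}(F,\Q)$ with $\ker\Psi$, and enumerate the rational $d$-th roots of unity. No gaps.
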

\begin{proof} Since ${\rm Aut}(\mathcal Z (F), \C)$ contains one element only, so does ${\rm Aut}({\mathcal Z} (F), \Q)$. This element is ${\rm Id}$, hence one has the equality
\[
{\rm Aut} (F, \Q)= \Psi^{-1} (\{{\rm Id}\})= \ker \Psi,
\]
and the result follows from Lemma \ref{278}.
\end{proof}
Lemma \ref{278ter} never applies when $d=3$, since, in that case, ${\rm Aut}(\mathcal Z (F), \C)$ always has six elements.

\medskip 
We finish this section by some conventions and notations. The results of \S \ref{Section:Examples} concern binary forms $F(X,Y)$ having $a_0\neq 0$ with the notation \eqref{defF}. Thus it is natural to identify the roots of the form $F(X,Y)$ in $\PP^1 (\C)$ with the zeroes (on the complex affine line) of the {\em associated polynomial} 
\begin{equation}\label{669}
f(t)= F(t,1)/a_0.
\end{equation}
By construction, this polynomial is monic and we are led to consider, for $d \geqslant 1$, the following set
of polynomials
\begin{equation}\label{defPd(K)}
\mathcal P_d (\K):= \{ f\in \K [t]: f\text{ monic},\, \deg f =d, {\rm disc } f \neq 0 \},
\end{equation}
with $\K= \C$, $\R$, $\Q$ or $\Z$.
If $f$ and $g$ belong to $\mathcal P_d (\K)$, we systematically write them as 
\begin{equation}\label{670*}
f(t) = t^d +\alpha_1 t^{d-1} + \cdots + \alpha_d,
\end{equation}
and 
\begin{equation}\label{670**}
g(z) = z^d +\beta_1 z^{d-1} + \cdots + \beta_d.
\end{equation}
By convention, we put $\alpha_0 = \beta_0 =1$.

By analogy with \eqref{defLambda}, we define 
\begin{equation}\label{deflambdas}
\Lambda^+ (f) := \max\{ \ell : \alpha_i= 0, 0 < i <\ell \}.
\end{equation}
Since ${\rm disc } f \neq 0$, the polynomial $f(t)$ is not the monomial $t^d$, thus we have 
\begin{equation}\label{1leqleqd}
1\leqslant \Lambda^+ (f)\leqslant d.
\end{equation}
 By analogy with $\mathcal Z (F)$, we introduce the set $\mathcal Z (f):= \{ \rho \in \C : f (\rho) =0\}. $ This set of zeroes has cardinality $d$.

 %%%%%%%%%%%%%%%%%%%%%
\subsubsection{Case of two forms.} %\label{SS:CaseOfTwoForms}

This paragraph generalizes the section \ref{3.1.1}
by studying the links between the zeroes of two forms $F_1$ and $F_2$. 

Since we do not wish to consider multiplicities for the zeroes, we need to assume that the discriminants are not zero: the two binary forms $(X-Y) (X-2Y)^2(X-3Y)^2$ and $(X-Y)(X-2Y)(X-3Y)^3$ have the same degree, the same sets of zeroes (not with the same multiplicities), and they are not isomorphic. 

\begin{definition}% \label{defisom} 
Let $\K$ be one of the fields $\Q$, $\R$ or $\C$. Let $\mathcal E_1$ and $\mathcal E_2$ be two subsets of $\PP^1 (\K)$ with equal cardinalities $\geqslant 3$. We call {\em $\K$--isomorphism between $\mathcal E_1$ and $\mathcal E_2$} any bijection $\phi$ from $\mathcal E_1$ on $\mathcal E_2$ such that there exists $h\in {\GL}(2,\K)$ with the property of restriction 
\[
\tilde h_{\vert \mathcal E_1} = \phi. 
\]
The set of these isomorphisms is denoted by ${\rm Isom}\bigl(\, \mathcal E_1, \mathcal E_2; \K\, \bigr)$. If this set is not empty, we say that $\mathcal E_1$ and $\mathcal E_2$ are {\em $\K$--isomorphic}.
\end{definition}
When $\sharp \mathcal E_1 = \sharp \mathcal E_2 =3$, then the sets $\mathcal E_1$ and $\mathcal E_2$ are $K$--isomorphic.
We will use 
\begin{lemma}\label{iso<->auto} Let $d \geqslant 3$. Let $F_1$ and $F_2$ be two forms of $\Bin  (d, \C)$. We suppose that $\mathcal Z (F_1)$ and $\mathcal Z (F_2)$ are $\C$--isomorphic and let $\mathfrak h$ be
an element of ${\rm Isom} (\mathcal Z (F_1), \mathcal Z(F_2); \C)$. Then there exists at least one element $\gamma_0 \in {\GL}(2, \C)$ such that 
\begin{equation} \label{348}
\tilde {\gamma_0}_{\vert {\mathcal Z (F_1)} }= \mathfrak h \text{ and } F_1= F_2 \circ \gamma_0.
\end{equation}
Finally when one such element $\gamma_0$ is fixed, we have the equality
\begin{equation}\label{equalityofsets}
\{ \gamma \in {\GL}(2, \C): \tilde {\gamma }_{\vert {\mathcal Z }(F_1)} = \mathfrak h \text{ and } F_1= F_2 \circ \gamma\} = \{ \lambda\, \gamma_0 : \lambda \in \C, \lambda^d=1 \}.
\end{equation}
\end{lemma}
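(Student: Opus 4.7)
The plan is to first produce any lift of the homography $\mathfrak{h}$ to $\GL(2,\C)$, then rescale it to enforce the functional identity $F_1 = F_2\circ\gamma_0$, and finally use the rigidity of homographies of $\PP^1$ under three-point agreement to classify all admissible lifts. No new ingredient beyond the material in \S\ref{3.1.1} is needed; the main subtlety is to keep track of the gap between the condition $\tilde\gamma_1 = \tilde\gamma_2$ on $\PP^1(\C)$ (which pins $\gamma$ down only up to a scalar, by \eqref{iff}) and the stronger condition $F_1 = F_2\circ\gamma$ (which consumes that scalar freedom through a $d$-th power constraint).

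For existence, I would begin by invoking the definition of $\mathrm{Isom}(\mathcal Z(F_1),\mathcal Z(F_2);\C)$ to choose some $\gamma_1\in\GL(2,\C)$ with $\tilde\gamma_{1\, \vert \mathcal Z(F_1)}=\mathfrak h$. Formula \eqref{Z (fcircgamma)} gives $\mathcal Z(F_2\circ\gamma_1) = \tilde\gamma_1^{-1}(\mathcal Z(F_2)) = \mathcal Z(F_1)$, because $\mathfrak h$ is a bijection from $\mathcal Z(F_1)$ onto $\mathcal Z(F_2)$. Both $F_1$ and $F_2\circ\gamma_1$ lie in $\Bin(d,\C)$, hence each splits in $\PP^1(\C)$ as a product of $d$ simple linear factors; sharing the same zero set with the same (trivial) multiplicities, they must be proportional, say $F_1=\alpha(F_2\circ\gamma_1)$ with $\alpha\in\C^\times$. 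I would then pick $\mu\in\C^\times$ with $\mu^d=\alpha$ and set $\gamma_0:=\mu\gamma_1$. By \eqref{iff} we have $\tilde\gamma_0=\tilde\gamma_1$, so the restriction to $\mathcal Z(F_1)$ is still $\mathfrak h$; and the homogeneity of degree $d$ yields $F_2\circ\gamma_0=\mu^d(F_2\circ\gamma_1)=\alpha(F_2\circ\gamma_1)=F_1$, confirming \eqref{348}.

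For the classification \eqref{equalityofsets}, I would argue as follows. If $\gamma\in\GL(2,\C)$ satisfies both conditions, then $\tilde\gamma$ and $\tilde\gamma_0$ coincide on $\mathcal Z(F_1)$, a set of $d\geqslant 3$ points in $\PP^1(\C)$; since a M\"obius transformation is determined by its values on three distinct points, they coincide on all of $\PP^1(\C)$. By \eqref{iff} there exists $\lambda\in\C^\times$ with $\gamma=\lambda\gamma_0$, and then $F_1=F_2\circ\gamma=\lambda^d(F_2\circ\gamma_0)=\lambda^d F_1$ forces $\lambda^d=1$. The converse inclusion is a direct verification, which gives the stated equality. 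The only place where a slip could occur is conflating the two notions of equality mentioned above, but once this is disentangled the argument is short and essentially formal.
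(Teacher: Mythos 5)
Your proof is correct and follows essentially the same route as the paper: lift $\mathfrak h$ to some $\gamma_1\in\GL(2,\C)$, obtain $F_1=\alpha\,(F_2\circ\gamma_1)$, rescale by a $d$-th root of $\alpha$, and then use the three-point rigidity of homographies together with \eqref{iff} and homogeneity to get \eqref{equalityofsets}. The only (harmless) difference is that you deduce the proportionality abstractly from the equality of the zero sets with simple zeroes (the argument the paper itself uses for surjectivity in Lemma \ref{278}), whereas the paper's proof of this lemma computes the proportionality constant explicitly, as $c(\det\gamma)^d$, via the factorization of $F_1$ into linear forms.
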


It follows that $F_1$ and $F_2$ in $\Bin  (d, \C)$ are $\C$--isomorphic if and only if $\mathcal Z (F_1)$ and $\mathcal Z (F_2)$ are $\C$--isomorphic. 

\begin{remark}\label{remarque4}
 When $\mathfrak h$ belongs to ${\rm Isom}(\mathcal Z (F_1), \mathcal Z (F_2); \Q)$, we cannot ensure the existence $\gamma_0\in {\GL}(2, \Q)$ satisfying \eqref{348}. This is the content of Remark \ref{remarque3} above, with the choice $F_1=F_2=F$ and $\mathfrak h = \xi|_{\mathcal Z (F)}$. 
\end{remark}

%%%%%%%%%%%%%%%%%%%
\begin{proof} By the definition of $\mathfrak h$, there exists $\gamma \in {\GL}(2, \C)$ such that $\tilde \gamma_{\vert \mathcal Z (F_1)} = \mathfrak h$. Write $\gamma $ as in \eqref{a1a2a3a4} 
and 
\[
F_1 (X,Y) =\prod_{i=1}^d \bigl(\, \alpha_i X - \beta_i Y\, \bigr).
\]
So we have $\mathcal Z (F_1)= \bigl\{ (\beta_i: \alpha_i) : 1\leqslant i \leqslant d\bigr\}$ and 
\[
\mathfrak h (\beta_i: \alpha_i)= (u_1\beta_i + u_2 \alpha_i:
u_3 \beta_i + u_4 \alpha_i).
\]
Since $ \mathcal Z (F_2) = \mathfrak h (\mathcal Z (F_1)) $ we deduce the equality
\[
\mathcal Z (F_2) = \bigl\{ (u_1\beta_i + u_2 \alpha_i:
u_3 \beta_i + u_4 \alpha_i) : 1\leqslant i \leqslant d \bigr\},
\]
and the existence of some 
 $c\in \C^\times$ such that
\[
F_2(X,Y) = c \prod_{i=1}^d \left( (u_3 \beta_i + u_4 \alpha_i)X- (u_1\beta_i + u_2 \alpha_i)Y
\right).
\]
From this equality and from the equality 
\[
 ( u_3 \beta_i + u_4 \alpha_i) (u_1X+u_2Y) -(u_1\beta_i + u_2 \alpha_i)(u_3X +u_4Y) = (\det \gamma) \, (\alpha_i X-\beta_iY),
\]
we deduce the equality
\[
F_2 \circ \gamma = c\,(\det \gamma)^d \,F_1.
\]
Define $\gamma_0 := c^{-1/d} (\det \gamma)^{-1}\, \gamma$ where $c^{1/d}$ is any $d$--th root of $c$. 
Then
$\gamma_0$ satisfies \eqref{348}.

To prove \eqref{equalityofsets}, we notice that $\tilde \gamma$ and $\tilde \gamma_0$ co\" incide on a set of $d\geqslant 3$ points of $\PP^1 (\C)$. So they are equal. By \eqref{iff}, there exists 
$\lambda \in \C^\times$ such that $\gamma =\lambda \, \gamma_0$. By hypothesis we have $F_1= F_2 \circ \gamma_0 =F_2 \circ \gamma$.
But $F_2$ is homogeneous with degree $d$ which leads to the condition $\lambda^d =1$. 
\end{proof}

%%%%%%%%%%%%%%%%%%%%%%%%%%%
In the same order of ideas as Lemma \ref{iso<->auto} we have 
\begin{proposition}\label{Prop3.9}
Let $d \geqslant 3$ and let $F$ and $G$ two distinct forms of $\Bin  (d, \C)$ such that
\[
{\rm Isom} (\mathcal Z (F), \mathcal Z (G); \C) = \emptyset.
\]
Then we have the equality
 \[
 \{ \gamma \in {\GL}(2, \C) : F= G\circ \gamma \}
 =\emptyset.
 \]
\end{proposition}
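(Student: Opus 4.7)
The plan is to argue by contraposition: I will show that if the set $\{\gamma \in \GL(2,\C) : F = G \circ \gamma\}$ is nonempty, then $\mathcal{Z}(F)$ and $\mathcal{Z}(G)$ must be $\C$--isomorphic, contradicting the hypothesis.

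So suppose $\gamma \in \GL(2,\C)$ satisfies $F = G \circ \gamma$. Applying the conjugation equality \eqref{Z (fcircgamma)} to the form $G$, I get
\[
\mathcal{Z}(F) = \mathcal{Z}(G \circ \gamma) = \tilde{\gamma}^{-1}\bigl(\mathcal{Z}(G)\bigr),
\]
or equivalently $\tilde{\gamma}\bigl(\mathcal{Z}(F)\bigr) = \mathcal{Z}(G)$. Since $F, G \in \Bin(d,\C)$ with $d \geq 3$, both sets $\mathcal{Z}(F)$ and $\mathcal{Z}(G)$ consist of exactly $d$ points of $\PP^1(\C)$, and the homography $\tilde{\gamma}$ is a bijection of $\PP^1(\C)$, so its restriction
\[
\tilde{\gamma}_{\vert \mathcal{Z}(F)} \colon \mathcal{Z}(F) \longrightarrow \mathcal{Z}(G)
\]
is a bijection. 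By the very definition of a $\C$--isomorphism between two subsets of $\PP^1(\C)$ of equal cardinality $\geq 3$, this restriction belongs to ${\rm Isom}(\mathcal{Z}(F), \mathcal{Z}(G); \C)$.

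Hence ${\rm Isom}(\mathcal{Z}(F), \mathcal{Z}(G); \C) \neq \emptyset$, which contradicts the assumption of the proposition. I don't anticipate any real obstacle here: once the conjugation formula \eqref{Z (fcircgamma)} is in hand, the statement is essentially a tautological repackaging. The hypothesis $d \geq 3$ is used only to make the notion of $\C$--isomorphism well-defined in the sense of the definition preceding Lemma \ref{iso<->auto}, which requires the two sets to have equal cardinality $\geq 3$; the hypothesis that $F$ and $G$ are distinct plays no role in the argument (and indeed the same proof would go through without it).
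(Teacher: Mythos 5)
Your proposal is correct and follows exactly the paper's argument: contraposition, the conjugation formula $\mathcal Z(F)=\mathcal Z(G\circ\gamma)=\tilde\gamma^{-1}(\mathcal Z(G))$, and the observation that the restriction of $\tilde\gamma$ is then a $\C$--isomorphism between the two zero sets. No differences worth noting.
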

\begin{proof} By contraposition, suppose that there exists $\gamma \in {\GL}(2, \C)$ such that $F=G\circ \gamma$.
By \eqref{Z (fcircgamma)}, we have 
$\tilde \gamma^{-1} (\mathcal Z (G)) = \mathcal Z (F)$. Thus $\tilde \gamma$ is an isomorphism between $\mathcal Z (F)$
and $\mathcal Z(G)$.
\end{proof}

 %%%%%%%%%%%%%%%%%%%%%
 
 %%%%%%%%%%%%%%%%%%%%%
\subsubsection{From binary forms to polynomials}%\label{SS:From binary forms to polynomials}
We will define the action of the homographies on the set of polynomials in $\mathcal P_d (\K)$.

 Let $F$ and $G$ be two elements in $\Bin(d,\K)$ written as  
 \begin{equation}\label{F=G=}
 \begin{cases} F(X,Y) = a_0 X^d + a_1 X^{d-1} Y + \cdots + a_d Y^d
 \\ 
 G(X,Y) = b_0 X^d+ b_1 X^{d-1} Y+ \cdots + b_d Y^d
 \end{cases}
 \end{equation}
 and let $\gamma$ be an element in $\GL(2,\K)$ written as in \eqref{abcd}. Assume
 $F=G\circ\gamma$:
 \begin{equation}\label{equation:F=Gcircgamma}
 F(X,Y)=G(u_1X+u_2Y,u_3X+u_4Y).
 \end{equation}
Consider the two monic polynomials $f$ and $g$ in $\mathcal P_d (\K)$ associated with the binary forms $F$ and $G$ respectively (recall the definition \eqref{669}):
 \[
 f(t)=\frac 1 {a_0} F(t,1) \text{ and } g(z)=\frac 1 {b_0} G(z,1).
 \]
 The relation  \eqref{equation:F=Gcircgamma} gives 
  \[
 f(t)=\frac {b_0}{a_0} (u_3t+u_4)^dg\left( \frac {u_1t+u_2}{u_3t+u_4}\right)
  \]
and 
\begin{equation}\label{Equation:a_0}
 \frac {a_0}{b_0}= c(\gamma,g)
 \text{ where } c(\gamma,g):=\begin{cases}
 u_3^dg(u_1/u_3)&\text{ if $u_3\not=0$},
 \\
 u_1^d&\text{ if $u_3=0$}.
 \end{cases}
\end{equation}

\begin{definition}\label{Definition:h(f)=g}
 When $f$ and $g$ are two monic polynomials in $\mathcal P_d (\K)$ and $\mathfrak h$ an homography with matrix $\gamma$, we write $\mathfrak h(f)=g$ if $f$ and $g$ are related by the equation
 \[%\begin{equation}\label{equation:h(f)=g}
 f(t)=\frac 1 {c(\gamma,g)}(u_3t+u_4)^dg\left( \frac {u_1t+u_2}{u_3t+u_4}\right)
 \]%\end{equation}
 where $c(\gamma,g)$ is defined in \eqref{Equation:a_0}.
 \end{definition}
 
 We separate the homographies over $\mathbb P^1 (\K)=\K\cup \{\infty\}$ into two sorts. Let $\mathfrak h= \tilde \gamma$ be an homography of $\mathbb P^1(\K)$ associated with an element $\gamma$ as in \eqref{a1a2a3a4}.
 \\ 
 \noindent
 $\bullet$ If the coefficient $u_3$ of $\gamma$ is $0$, then $u_4\not=0$ and we set
\[
q=\frac {u_1}{u_4}, \quad r=\frac {u_2}{u_4}\cdotp
\]
We say that $\mathfrak h$ is an {\it affine homography}. We write it as $\mathfrak h= \mathfrak h_{q,r}$, and for
$t \in \mathbb P^1(\K)$ we have
\[
\mathfrak h (t) = \mathfrak h_{q,r} (t) = qt +r,
\]
with $(q,r) \in \K^\times \times \K$. 
 \\ 
$\bullet$ If $u_3\neq 0$, we set
\[
q=\frac {u_1}{u_3}, \quad r=\frac {u_2u_3-u_1u_4}{u_3^2}, \quad s=-\frac {u_4}{u_3}\cdotp
\]
We say that $\mathfrak h$ is a {\em non affine homography}. We write it as $\mathfrak h= \mathfrak h_{q,r,s}$, and for
$t \in \mathbb P^1(\K)$ we
have
\[
\mathfrak h (t) = \mathfrak h_{q,r,s} (t) = q+\frac r {t-s},
\]
where $(q,r,s) \in \K \times \K^\times \times \K$.

\noindent 
The formulas for the inverses are: 
 \begin{equation}\label{inverse}
 {\mathfrak h}_{q,r}^{-1}= {\mathfrak h}_{q^{-1},-rq^{-1}}
 \quad\text{and}
 \quad
 {\mathfrak h}_{q,r,s} ^{-1}= {\mathfrak h}_{s,r,q}.
 \end{equation}

 From Definition \ref{Definition:h(f)=g} we deduce:
 
 \begin{lemma}\label{998} Let $d \geqslant 2$, let $f(t)$ and $g(z)$ two monic polynomials of $\mathcal P_d(\K)$ written as in \eqref{670*} and \eqref{670**}. Let $\mathfrak h$ be an homography such that $g=\mathfrak h (f)$.
 
\begin{enumerate}
\item If $\mathfrak h$ is an affine homography written as $\mathfrak h = \mathfrak h_{q,r}$, then we have \[
z=\mathfrak h_{q,r} (t) =qt+r, \quad t= \mathfrak h_{q,r}^{-1} (z) = \frac 1q (z-r),
\]
\[%\begin{equation}\label{f(t)=}
f(t) = \frac 1{q^d} g(qt+r), \quad g(z) = q^d f \left( \frac {z-r}q \right).
\]%\end{equation}
\item If $\mathfrak h$ is a non--affine homography written as $\mathfrak h = \mathfrak h_{q,r,s}$, then we have 
\[
z=\mathfrak h_{q,r,s} (t) =q+ \frac r{t-s}, \quad t= \mathfrak h_{q,r,s}^{-1} (z) = s+\frac r{z-q},
\]
\begin{equation}\label{554}
f(t) = \frac{(t-s)^d}{g(q)} g\left( q+\frac r{t-s}\right), \quad g(z) = \frac{(z-q)^d}{f(s)}f \left( s +\frac r{z-q} \right), 
\end{equation}
and
\begin{equation}\label{555}
f(s)g(q) =r^d. 
\end{equation}
\end{enumerate}
\end{lemma}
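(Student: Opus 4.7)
The plan is to unpack Definition \ref{Definition:h(f)=g} in each of the two cases and then derive the single nontrivial identity \eqref{555}, from which the remaining formulas follow by direct substitution.

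\smallskip

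In the affine case, $u_3=0$ and $u_4\neq 0$, so $q=u_1/u_4$, $r=u_2/u_4$, and I would first verify $\tilde\gamma(t)=(u_1t+u_2)/u_4=qt+r$; the formula $t=(z-r)/q$ for the inverse is exactly the content of \eqref{inverse}. Since $u_3t+u_4=u_4$, $(u_1t+u_2)/(u_3t+u_4)=qt+r$, and $c(\gamma,g)=u_1^d=q^du_4^d$, Definition \ref{Definition:h(f)=g} collapses to $f(t)=q^{-d}g(qt+r)$. Substituting $z=qt+r$ gives $g(z)=q^df((z-r)/q)$, as claimed.

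\smallskip

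In the non-affine case $u_3\neq 0$, I would set $q=u_1/u_3$, $r=(u_2u_3-u_1u_4)/u_3^2$, $s=-u_4/u_3$, and compute the three identities
\[
u_3t+u_4=u_3(t-s),\qquad \frac{u_1t+u_2}{u_3t+u_4}=q+\frac{r}{t-s},\qquad c(\gamma,g)=u_3^dg(q),
\]
the first two being an elementary check and the third coming directly from the definition of $c(\gamma,g)$. Substituted into Definition \ref{Definition:h(f)=g} they yield
\[
f(t)=\frac{(t-s)^d}{g(q)}\,g\!\left(q+\frac{r}{t-s}\right),
\]
which is the first half of \eqref{554}; the formula $t=s+r/(z-q)$ for $\tilde\gamma^{-1}$ comes from \eqref{inverse}.

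\smallskip

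To prove \eqref{555}, I would expand
\[
(t-s)^dg\!\left(q+\frac{r}{t-s}\right)=\sum_{k=0}^{d}\beta_k\bigl(q(t-s)+r\bigr)^{d-k}(t-s)^k,
\]
which shows that the left-hand side is a genuine polynomial in $t$. Evaluating at $t=s$ kills every term with $k\geqslant 1$ and leaves only the $k=0$ summand, equal to $r^d$ (recall $\beta_0=1$). Combined with the formula for $f(t)$ just derived, this gives $f(s)=r^d/g(q)$, which is \eqref{555}. The second formula in \eqref{554} is then a mechanical substitution: letting $t=s+r/(z-q)$ one has $t-s=r/(z-q)$ and $q+r/(t-s)=z$, so
\[
f\!\left(s+\frac{r}{z-q}\right)=\frac{r^d}{(z-q)^d\,g(q)}\,g(z),
\]
and replacing $r^d/g(q)$ by $f(s)$ via \eqref{555} yields $g(z)=\dfrac{(z-q)^d}{f(s)}\,f\bigl(s+\tfrac{r}{z-q}\bigr)$.

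\smallskip

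There is no real obstacle: the whole lemma is an unpacking of definitions. The only point that requires any thought is \eqref{555}, where one must recognize that the apparent singularity at $t=s$ of $g(q+r/(t-s))$ is cancelled by the factor $(t-s)^d$ because $g$ is monic of degree exactly $d$, and that the residual constant is precisely $r^d$.
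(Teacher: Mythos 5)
Your proof is correct and follows the same route the paper intends: the paper states Lemma \ref{998} as a direct consequence of Definition \ref{Definition:h(f)=g} without writing out the computation, and your unpacking of $c(\gamma,g)$ in the two cases, together with the evaluation at $t=s$ of the polynomial $(t-s)^d g\bigl(q+\tfrac r{t-s}\bigr)$ to get \eqref{555}, is exactly that intended verification.
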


When $\K=\C$, given a monic polynomial $f$ in $\mathcal P_d(\C)$, we can write 
 \[
f(z)=\prod_{\rho \in \mathcal Z(f)} (z-\rho).
 \] 
From Definition \ref{Definition:h(f)=g} we deduce
\begin{equation}\label{defmathfrakh}
\left(\mathfrak h (f) \right) (z) :=\prod_{\rho \in \mathcal Z (f)} \left( z-\mathfrak h (\rho)\right).
\end{equation}
 Since the polynomials of $\mathcal P_d (\C)$ have exactly $d$ roots which are all distinct, we have, for any homography $\mathfrak h$ and for any $f$ and $g$ in $\mathcal P_d (\C)$ the property
 \[%\begin{equation}\label{231}
 \mathfrak h (\mathcal Z (f)) =\mathcal Z (g) \iff g =\mathfrak h (f).
\]% \end{equation} 

 \begin{lemma} \label{Lemma0.2} Let $d\geqslant 3$ and $F$ and $G$ elements of $\Bin (d, \K)$, written as in \eqref{F=G=}.
 We suppose that $a_0a_db_0b_d \neq 0$. Let $f(t)= F(t,1)/a_0$ and $g(z) = G(z,1)/b_0 $ be the two polynomials in $\mathcal P_d (\K)$ associated with $F$ and $G$. Let $\gamma \in {\rm GL}(2, \K)$ and let $\tilde \gamma= \mathfrak h$ be the homography associated with $\gamma$. Then the two following conditions are equivalent:
 \\
 (i) There exists $\nu\in\K^\times$ such that $G\circ \gamma=\nu F$.
 \\
 (ii) We have the equality
\[
 \mathfrak h (f) =g.
\] 
 \end{lemma}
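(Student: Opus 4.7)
The plan is to reduce both conditions to a single polynomial identity in $t$ and identify the nonzero scaling constants via a leading-coefficient computation.

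First I would dehomogenize. Since $a_0, b_0\neq 0$, we have $F(X,Y)=a_0Y^df(X/Y)$ and $G(X,Y)=b_0Y^dg(X/Y)$, hence $F(t,1)=a_0f(t)$ and
\[
G(u_1t+u_2,\,u_3t+u_4)=b_0(u_3t+u_4)^d\,g\!\left(\frac{u_1t+u_2}{u_3t+u_4}\right),
\]
as an identity of polynomials in $t$ (the two sides agree on the Zariski-open set where $u_3t+u_4\neq 0$, hence everywhere). Let
\[
P(t):=(u_3t+u_4)^d\,g\!\left(\frac{u_1t+u_2}{u_3t+u_4}\right)=\prod_{\rho\in\mathcal Z(g)}\bigl((u_1-\rho u_3)t+(u_2-\rho u_4)\bigr),
\]
which is a polynomial in $t$ of degree at most $d$. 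Its coefficient of $t^d$ is $\prod_\rho(u_1-\rho u_3)$, which equals $u_3^dg(u_1/u_3)$ if $u_3\neq 0$ and equals $u_1^d$ if $u_3=0$. In both cases this leading coefficient is exactly $c(\gamma,g)$ from \eqref{Equation:a_0}; in particular $P$ has degree $d$ precisely when $c(\gamma,g)\neq 0$.

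For (i) $\Rightarrow$ (ii): evaluating the identity $G\circ\gamma=\nu F$ at $(t,1)$ yields $b_0P(t)=\nu a_0f(t)$. Since the right-hand side is a monic polynomial of degree $d$ scaled by the nonzero constant $\nu a_0$, one deduces that $P$ has degree exactly $d$ and its leading coefficient equals $\nu a_0/b_0$. Comparing with the computation above gives $c(\gamma,g)=\nu a_0/b_0\neq 0$, and dividing through by $b_0c(\gamma,g)$ gives $f(t)=P(t)/c(\gamma,g)$, which is exactly the defining identity $\mathfrak h(f)=g$ of Definition \ref{Definition:h(f)=g}.

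For (ii) $\Rightarrow$ (i): the relation $\mathfrak h(f)=g$ presupposes $c(\gamma,g)\neq 0$ and reads $c(\gamma,g)f(t)=P(t)$. Multiplying by $b_0$ gives $b_0P(t)=\nu a_0f(t)$ with $\nu:=b_0c(\gamma,g)/a_0\in\K^\times$; the left side is $(G\circ\gamma)(t,1)$ and the right side is $\nu F(t,1)$. Since $G\circ\gamma$ and $\nu F$ are both binary forms of degree $d$, and since a binary form of degree $d$ is determined by its specialization at $Y=1$ (read off the coefficients of powers of $X$), we conclude $G\circ\gamma=\nu F$. The only subtle point is the leading-coefficient identification of $P$ with $c(\gamma,g)$ across the affine/non-affine dichotomy of the homography, which is handled uniformly by the product formula for $P$ displayed above.
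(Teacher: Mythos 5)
Your proof is correct and follows essentially the same route as the paper: dehomogenize, identify the scaling constant $c(\gamma,g)$ of \eqref{Equation:a_0} as the leading coefficient of $(u_3t+u_4)^d\,g\bigl((u_1t+u_2)/(u_3t+u_4)\bigr)$, and homogenize back for the converse, with $\nu=b_0c(\gamma,g)/a_0$. The only difference --- computing that leading coefficient via the factorization over $\mathcal Z(g)$ rather than directly --- is cosmetic.
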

 
 \begin{proof} 
 $(i)$ implies $(ii)$ has been proved in \eqref{Equation:a_0}.
\\
 Conversely, assume $ \mathfrak h (f) =g$. From Definition \ref{Definition:h(f)=g} we deduce 
 \[
 \begin{aligned}
 F(t,1)=a_0f(t)&=  \frac{a_0}{ c(\gamma,g)}(u_3t+u_4)^dg\left(\frac{u_1t+u_2}{u_3t+u_4}\right)
 \\
 &=\frac {a_0}{b_0 c(\gamma,g)}(u_3t+u_4)^dG\left(\frac{u_1t+u_2}{u_3t+u_4},1\right)
 \\
 &=\frac {a_0}{b_0 c(\gamma,g)}G(u_1t+u_2, u_3t+u_4),
 \end{aligned}
\]
 hence the result with $\nu=\frac{b_0 }{a_0}c(\gamma,g)$.
 %The proof follows from the definition \eqref{defmathfrakh} and the formula \eqref{Z (fcircgamma)}.
 \end{proof}

For $\nu\in \K^\times$ and $F\in\Bin(d,\K)$, the polynomial $f$ associated to  $F$  is the same as the polynomial associated to $\nu F$. When $\K=\C$, the two forms $F$ and $\nu F$ are $\C$--isomorphic;  in general, $F$ and $\nu F$ are $\K$--isomorphic when $\nu$ is a $d$--th power of an element in $\K$. 
 Consider for instance the two forms 	
\[
F(X,Y)=X^4+4XY^3-Y^4
\text{ and } G(X,Y)=4F(X,Y).
\]
There is no $\gamma \in  {\rm GL}(2, \Q)$ such that $\tilde \gamma ={\rm Id}$ and $F\circ \gamma =G$. Nevertheless the forms $F$ and $G$ are $\Q$--isomorphic, since we have the equality
  $$F(X+Y, X-Y) = G(X,Y).$$
 
  %%%%%%%%%%%%%%%%%%%%%
 
 %%%%%%%%%%%%%%%%%%%%%
\subsection{Preparation of the proof of Theorem \ref{486}} 

By Lemma \ref{278ter} and Proposition \ref{Prop3.9}, we see that a first step in the proof of Theorem \ref{486} will be the following proposition
 
\begin{proposition} \label{central724} Let $d\geqslant 3$. Suppose that there exist 
polynomials $f$ and $g$ in $ {\mathcal P}_d(\C)$ such that
\begin{equation}\label{727}
\Lambda^+ (f) +\Lambda^+ (g) \geqslant d+ 3
\end{equation}
 and a
 homography $\mathfrak h$ such that $\mathfrak h (f) = g$. Then either 
\begin{enumerate}
\item $\mathfrak h$ is a homothety $\mathfrak h_{q,0}$ with $q\in \C^\times$,
or
\item $\mathfrak h$ is a non affine homography and it is of the form $\mathfrak h_{0, r, 0}$ with $r\in \C^\times$. In that situation $f$ and $g$
are of the form $t^d + \alpha_d $ and $z^d+ \beta_d$, with $\alpha_d \beta_d = r^d$.
\item 
The hypothesis \eqref{727} is optimal to obtain the conclusions of the case 2: there exist elements $f$, $g$ of $ {\mathcal P}_d(\C)$ and a non affine homography of the form $\mathfrak h=\mathfrak h_{q,r,s}$, with $(q,s)\not=(0,0)$ with $\mathfrak h (f) = g$ and $\Lambda^+ (f) +\Lambda^+ (g) =d+2$.
\end{enumerate}
\end{proposition}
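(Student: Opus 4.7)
The plan is to analyze the identity $\mathfrak h(f)=g$ via the explicit formulas of Lemma \ref{998}, splitting into affine and non-affine cases and exploiting the vanishing of the low-order coefficients of $f$ and $g$. Set $L=\Lambda^+(f)$, $M=\Lambda^+(g)$; since $L,M\leqslant d$, the hypothesis $L+M\geqslant d+3$ forces $L,M\geqslant 3$.

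For the affine case $\mathfrak h=\mathfrak h_{q,r}$, I would compare the coefficient of $t^{d-1}$ in $f(t)=q^{-d}g(qt+r)$: this yields $\alpha_1=q^{-1}(dr+\beta_1)$, and $\alpha_1=\beta_1=0$ forces $r=0$, placing us in conclusion~(1). For the non-affine case $\mathfrak h=\mathfrak h_{q,r,s}$, I would expand $f(s+u)=u^d g(q+r/u)/g(q)$ and compute the coefficient $\tilde\alpha_k$ of $u^{d-k}$ in two ways: by Taylor's formula it equals $r^k g^{(k)}(q)/(k!\,g(q))$, and, thanks to $\alpha_i=0$ for $1\leqslant i\leqslant L-1$, it also equals $\binom{d}{k}s^k$ for $k\leqslant L-1$. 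Since $\beta_j=0$ for $1\leqslant j\leqslant M-1$, only the leading term of $g$ contributes to $g^{(k)}(q)$ when $k\geqslant d-M+1$, giving $g^{(k)}(q)/k!=\binom{d}{k}q^{d-k}$. Combining,
\[
s^k\,g(q)=r^k\,q^{d-k}\qquad(d-M+1\leqslant k\leqslant L-1),
\]
a range containing at least two consecutive integers by $L+M\geqslant d+3$.

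If $q=0$, the first such equation combined with $g(0)=\beta_d\neq 0$ (needed for $g(q)\neq 0$) forces $s=0$, so $\mathfrak h=\mathfrak h_{0,r,0}$; Lemma~\ref{998} then gives $\alpha_i=\beta_{d-i}r^i/\beta_d$, and combining the two vanishing patterns (using $L+M\geqslant d+3$) yields $f=t^d+\alpha_d$, $g=z^d+\beta_d$ and $\alpha_d\beta_d=r^d$, which is conclusion~(2). If instead $q\neq 0$, dividing two consecutive equations gives $sq=r$, hence $s\neq 0$, and the Taylor match collapses to $f(t)=Ct^d+O\bigl((t-s)^M\bigr)$ with $C:=f(s)/s^d$. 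Setting $\phi:=f-Ct^d$, the polynomial $\phi$ is divisible by $(t-s)^M$ and, thanks to $\alpha_1=\cdots=\alpha_{L-1}=0$, is supported on the $d-L+2$ monomials $\{t^d,t^{d-L},\ldots,t,1\}$. The aim is to deduce $\phi\equiv 0$, which then forces $f(t)=t^d$ and contradicts separability, ruling out the subcase $q\neq 0$.

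The main obstacle is this last step, a Vandermonde-type injectivity statement. Writing $\phi(t)=\sum_{m\in S}c_m t^m$ with $S=\{0,1,\ldots,d-L\}\cup\{d\}$, the $M$ conditions $\phi^{(k)}(s)=0$ for $0\leqslant k\leqslant M-1$ form a linear system with matrix $\bigl((m)_k\,s^{m-k}\bigr)_{k,m}$. Factoring out the nonzero quantities $s^{m-k}$ (possible since $s\neq 0$), the rank equals that of the falling-factorial matrix $\bigl((m)_k\bigr)_{k,m}$; since falling factorials form a triangular basis of polynomials in $m$, this matrix has the same rank as the Vandermonde matrix on the $|S|=d-L+2$ distinct integers of $S$, so injectivity follows from $M\geqslant d-L+2$, itself a consequence of $L+M\geqslant d+3$. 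Finally, for optimality~(3) I would exhibit a cubic example: $d=3$, $f(t)=t^3-1$ (so $L=3$), and a non-affine $\mathfrak h=\mathfrak h_{q,r,s}$ with $q,s\neq 0$ and $sq\neq r$ sending the cube roots of unity to three complex numbers of vanishing sum; then $g(z)=z^3+\beta_2 z+\beta_3\in\mathcal P_3(\C)$ has $M=2$, giving $L+M=d+2$. An explicit instance is $\mathfrak h(t)=1-\tfrac{9/4}{t+2}$, which gives $(q,r,s)=(1,-9/4,-2)$ with $sq=-2\neq r$.
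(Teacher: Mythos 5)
Your treatment of items 1 and 2 is correct and, at its core, follows the same route as the paper: the affine case is settled by the vanishing of $\alpha_1,\beta_1$ (the paper phrases it via root sums, you via the coefficient of $t^{d-1}$), and in the non-affine case your identities $s^k g(q)=r^kq^{d-k}$ for $d-M+1\leqslant k\leqslant L-1$ are, after the substitution $k\mapsto d-k$ and the relation $f(s)g(q)=r^d$, exactly the relations \eqref{crucialbis} that the paper extracts from Lemmas \ref{872} and \ref{356bis}, and the hypothesis \eqref{727} again guarantees two consecutive exponents. Where you genuinely diverge is the elimination of the case $q\neq 0$ (hence $s\neq0$, $r=qs$): the paper plugs $qs/r=1$ back into the derivative identity at $k=d-\lambda$ and $k=d-\lambda+1$ and reads off $\alpha_\lambda=0$, a contradiction, whereas you show that $f-Ct^d$ with $C=f(s)/s^d$ vanishes to order $M$ at $s\neq0$ while being supported on $d-L+2$ monomials, and conclude by a falling-factorial/Vandermonde rank argument (equivalently, a nonzero lacunary polynomial cannot have a nonzero root of multiplicity at least its number of terms). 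That argument is valid as you set it up, the diagonal scalings by $s^m$ and $s^{-k}$ being legitimate precisely because $s\neq0$, and $M\geqslant d-L+3$ gives the needed inequality; your handling of $q=0\Rightarrow s=0$ and the passage to $f=t^d+\alpha_d$, $g=z^d+\beta_d$, $\alpha_d\beta_d=r^d$ is also correct.

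The gap is in item 3. The proposition fixes $d\geqslant 3$ at the outset, so the optimality claim must be established for every degree $d\geqslant 3$, and this is what the paper does: Lemma \ref{Lemma:FirstStep} solves, for generic $(q,r,s)\in(\C^\times)^3$, a $3\times3$ linear system producing a unique pair with $\Lambda^+(f)=3$ and $\Lambda^+(g)=d-1$, and an explicit instance ($q=1$, $r=2$, $s=1$, $g(z)=z^d+dz-d+3$) is then checked to have nonzero discriminant, with $d=3$ covered separately in Remark \ref{remark:d=3}. You only exhibit a cubic example (which is fine as far as it goes: your $\mathfrak h_{1,-9/4,-2}$ does send the roots of $t^3-1$ to points of vanishing sum, giving $g(z)=z^3+\tfrac38 z-\tfrac7{64}$, so $L+M=5$). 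But the device you use — killing only the first power sum of the images — controls only $\beta_1$ and cannot produce $\Lambda^+(f)+\Lambda^+(g)=d+2$ when $d\geqslant 4$, where one needs a long string of vanishing coefficients (e.g.\ $\beta_1=\cdots=\beta_{d-2}=0$ with $\Lambda^+(f)=3$) together with a verification that the resulting polynomials lie in $\mathcal P_d(\C)$ (nonzero coefficients where required and nonzero discriminant). So your proposal leaves item 3 unproved for all $d\geqslant 4$; some construction of the paper's type (or another explicit family in every degree) is missing.
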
 

\begin{remark} \label{remark:d=3}
In the particular case where $d=3$, the inequality \eqref{727} 
implies that $f$ and $g$ are necessarily of the form $f(t)= t^3+\alpha_3$ and $g(z) = z^3+\beta_3$ with $\alpha_3\beta_3\not=0$. An example of a pair $(f(t), g(z))$ corresponding to the item 3 is the pair
$(t^3+1, z^3+3z)$, since we have $\Lambda^+ (f) + \Lambda^+ (g) =5$ and 
\[
f(t) = \frac{(t-1)^3}{g(1)}g\left( 1+\frac 2{t-1} \right),
\]
(see \eqref{1336} below for a more general construction). \end{remark}

%%%%%%%%%%%%%%
\subsection{Proof of Proposition \ref{central724}.1, the case of affine homographies.}\label{firstcase}

We are concerned with the following question: let $f$ and $g$ two polynomials in $\mathcal P_d (\C)$ satisfying \eqref{727} and written as \eqref{670*} and \eqref{670**}. We want information about the pairs $(q,r)\in \C^\times \times \C$, such that 
\[
\mathfrak h_{q,r} (f) = g.
\]
The hypothesis \eqref{727} and the inequalities \eqref{1leqleqd} imply the inequalities $\Lambda^+ (f) \geqslant 2$ and $ \Lambda^+ (g) \geqslant 2$. This means $\alpha_1=\beta_1 =0$. So the sum of the roots of $
f$ and the sum of the roots of $g$ are equal to $0$. We write 
\[
0= \sum_{\rho \in \mathcal Z (f)} \rho = \sum_{\rho' \in \mathcal Z (g) } \rho' = \sum_{\rho \in \mathcal Z (f)} (q\rho +r) =dr,
\]
so we necessarily have $r=0$. The proof of the item 1 is complete. 
 
%%%%%%%%%%%%%%%%%%%
\subsection{Proof of Proposition \ref{central724}.2, the case of non affine homographies.} 

The question now is: let $f$ and $g$ be two polynomials in $\mathcal P_d (\C)$ satisfying \eqref{727}. We want information about the triples $(q,r,s)\in \C \times \C^\times\times \C$, such that 
\[
\mathfrak h_{q,r,s} (f) = g.
\]
This question is deeper than the question relative the affine homographies, since the formulas of transformations of the symmetric functions of the roots are more involved.

The case where $f(s)= 0$ corresponds to $g$ having a root sent to infinity. We avoid this fact by considering $g$ to be monic with degree $d$. Similar consideration applies to the condition $g(q)= 0$. Recall that in the definitions of $\mathcal U_d ^{(1)}(\K)$, $\mathcal U_d ^{(2)}(\Z)$, $\mathcal V_d^{(1)}( \K)$ and $\mathcal V_d^{(2)}( \Z)$ (see \S\ref{SS:HomographyFreeSets}), we impose $a_0\neq 0$, so the associated polynomials (see \eqref{669}) are monic with degree $d$. 
 
 %%%%%%%%%%%%%%%%%%%%%
\subsubsection{From the $f^{(i)}(s) $ to the $\beta_j$}

We now state

 \begin{lemma}\label{hugesystem} Let $d \geqslant 1$ and let $f$ and $g$ be monic polynomials in $\mathcal P_d (\C)$, written as in \eqref{670*} and \eqref{670**}. 
Suppose that there exists $(q,r,s)\in \C \times \C^\times \times \C$ such that the non affine homography $\mathfrak h_{q,r,s}$ satisfies $ g = \mathfrak h_{q,r,s} (f), 
 $ and $f(s) \neq 0$.
 
 Then we have the equalities
 \begin{equation*}
\beta_j f(s)=(-1)^{j} \sum_{i=0}^j (-1)^i q^{j-i} r^i \binom{d-i}{j-i} \frac{f^{(i)}(s)}{i\, !} \ (0\leqslant j\leqslant d)
\end{equation*}
\end{lemma}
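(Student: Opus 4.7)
The plan is to compute $g(z)$ explicitly from the formula provided by Lemma \ref{998} and then extract the coefficient of $z^{d-j}$.

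Starting point. By hypothesis $g=\mathfrak h_{q,r,s}(f)$ with $f(s)\neq 0$, so Lemma \ref{998} (equation \eqref{554}) gives
\[
g(z)=\frac{(z-q)^d}{f(s)}\, f\!\left(s+\frac{r}{z-q}\right).
\]
Since $f$ is a polynomial of degree $d$, Taylor's formula at $s$ is a \emph{finite} sum, so
\[
f\!\left(s+\frac{r}{z-q}\right)=\sum_{i=0}^{d}\frac{f^{(i)}(s)}{i!}\left(\frac{r}{z-q}\right)^{\!i}.
\]
Multiplying by $(z-q)^d/f(s)$ yields the polynomial identity
\[
f(s)\,g(z)=\sum_{i=0}^{d}\frac{f^{(i)}(s)}{i!}\,r^{i}\,(z-q)^{d-i}.
\]

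Extracting coefficients. I next expand $(z-q)^{d-i}$ by the binomial theorem and collect terms in $z^{d-j}$. The coefficient of $z^{d-j}$ in $(z-q)^{d-i}$ is nonzero only when $d-j\leqslant d-i$, i.e.\ $i\leqslant j$, and in that case equals $\binom{d-i}{d-j}(-q)^{(d-i)-(d-j)}=\binom{d-i}{j-i}(-q)^{j-i}$. Using that $g$ is monic of degree $d$ and that, by convention, the coefficient of $z^{d-j}$ in $g(z)$ is $\beta_j$, this gives
\[
f(s)\,\beta_j=\sum_{i=0}^{j}(-1)^{j-i}q^{j-i}r^{i}\binom{d-i}{j-i}\frac{f^{(i)}(s)}{i!}.
\]

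Sign cleanup. Finally $(-1)^{j-i}=(-1)^{j}(-1)^{i}$ (since $(-1)^{-i}=(-1)^{i}$), so pulling $(-1)^j$ out of the sum produces exactly the claimed formula
\[
\beta_j\,f(s)=(-1)^{j}\sum_{i=0}^{j}(-1)^{i}q^{j-i}r^{i}\binom{d-i}{j-i}\frac{f^{(i)}(s)}{i!},\qquad 0\leqslant j\leqslant d.
\]

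There is no real obstacle; the computation is purely a Taylor/binomial expansion exercise. The only thing to watch is the bookkeeping of signs and the fact that the upper summation index is $j$ (not $d$), which comes from the condition $d-j\leqslant d-i$ in the binomial expansion; the cases $i>j$ contribute $0$, which justifies why the sum terminates at $j$ exactly as in the statement.
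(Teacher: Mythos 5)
Your proof is correct and follows essentially the same route as the paper: apply the second identity of \eqref{554}, Taylor-expand $f$ at $s$ (a finite sum since $f$ is a polynomial), and identify the coefficient of $z^{d-j}$ via the binomial expansion of $(z-q)^{d-i}$. The coefficient bookkeeping, including the truncation of the sum at $i=j$ and the sign $(-1)^{j-i}=(-1)^j(-1)^i$, matches the paper's argument.
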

\begin{proof} By the last part of formula \eqref{554} of Lemma \ref{998} and by Taylor expansion, one has the equalities
 \begin{align*}
 g(z) & = \frac{(z-q)^d}{f(s)}\sum_{k=0}^d \frac{f^{(k)}(s)} {k\, !} \left(\frac {r}{z-q}\right)^k
 \\ 
 &= \frac{1}{f(s)}\sum_{k=0}^d r^k \frac{f^{(k)}(s)} {k\, !} (z-q)^{d-k}.
 \end{align*}
 Lemma \ref{hugesystem} follows by identification via the binomial formula to expand $(z-q)^{d-k}$.
\end{proof}

%%%%%%%%%%%%%%%%%%%%%%
\subsubsection{From the $\beta_i$ to the $f^{(j)} (s)$} 
 
\begin{lemma} \label{872} Let $d \geqslant 1$ and let $f$ and $g$ be polynomials in $\mathcal P_d (\C)$, written as in \eqref{670*} and \eqref{670**}.
Suppose that there exists $(q,r,s)\in \C \times \C^\times \times \C$ such that the non affine homography $\mathfrak h_{q,r,s}$ satisfies the equality $ g = \mathfrak h_{q,r,s} (f), 
 $ and $f(s) \neq 0$. 

\noindent Then we have the equalities 
 \begin{equation}\label{expressionofderivatives}\left( \frac rq\right)^j \cdot \frac{f^{(j)}(s)}{j\, !\, f(s)}
=\sum_{i=0}^j \binom{d-i}{j-i} \frac{\beta_{i}}{q^i},
\end{equation}
for $0\leqslant j \leqslant d$.

In the case where $q=0$, this formula has to be interpreted as
\begin{equation}\label{whenq=0}
 r^j \cdot \frac{f^{(j)}(s)}{j\, !\, f(s)}
= \beta_j.
\end{equation}
\end{lemma}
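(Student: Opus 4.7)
The plan is to obtain the formula as a direct consequence of the relation \eqref{554} already established in Lemma \ref{998}, by performing a single Taylor expansion and matching coefficients, rather than inverting the triangular linear system from Lemma \ref{hugesystem}.

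First I would introduce the shifted variable $w:=z-q$. Substituting in the first identity of \eqref{554}, and using the Taylor expansion of $f$ at $s$ (which is exact since $f$ is a polynomial of degree $d$), I would write
\begin{equation*}
g(q+w)=\frac{w^{d}}{f(s)}\,f\!\left(s+\frac{r}{w}\right)
=\frac{1}{f(s)}\sum_{k=0}^{d}\frac{f^{(k)}(s)}{k!}\,r^{k}\,w^{d-k}.
\end{equation*}
On the other hand, expanding $g$ coefficient-wise and applying the binomial theorem in the variable $w$ gives
\begin{equation*}
g(q+w)=\sum_{i=0}^{d}\beta_{i}(q+w)^{d-i}
=\sum_{j=0}^{d}\Bigl(\sum_{i=0}^{j}\binom{d-i}{j-i}q^{j-i}\beta_{i}\Bigr)w^{d-j}.
\end{equation*}

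Next, I would identify the coefficient of $w^{d-j}$ on both sides for $0\leqslant j\leqslant d$. This yields the clean relation
\begin{equation*}
\frac{r^{j}\,f^{(j)}(s)}{j!\,f(s)}
=\sum_{i=0}^{j}\binom{d-i}{j-i}q^{j-i}\beta_{i}.
\end{equation*}
Dividing both sides by $q^{j}$ when $q\neq 0$ produces exactly \eqref{expressionofderivatives}. When $q=0$, I would read the displayed identity directly, observing that on the right-hand side the factor $q^{j-i}$ vanishes for $i<j$ (with the convention $0^{0}=1$ for $i=j$), so only the term $i=j$ survives; this gives \eqref{whenq=0}.

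There is no real obstacle: the statement is a reformulation of Lemma \ref{hugesystem}, and the only conceptual choice is whether to invert the triangular system (which would reduce to the binomial identity $\sum_{m=0}^{n}(-1)^{m}\binom{M-m}{n-m}\binom{M}{m}=\delta_{n,0}$, provable via the generating function $(1+x)^{M}(1+x)^{-M}=1$) or to bypass the inversion by expanding $g(q+w)$ in two different ways. The second route is shorter and avoids any combinatorial verification, so that is the approach I would adopt. The only care needed is to cover the degenerate case $q=0$, which I would handle by stating that the identity in $w$ above holds without division and then specialising.
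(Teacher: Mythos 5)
Your proof is correct and is essentially the paper's own argument run from the other side of \eqref{554}: the paper expands $f(t)/f(s)=\bigl((t-s)/r\bigr)^d\,g\bigl(q+r/(t-s)\bigr)$ in powers of $t-s$ (using \eqref{555} to replace $g(q)$) and identifies Taylor coefficients, whereas you expand $g(q+w)=w^d f(s+r/w)/f(s)$ in powers of $w=z-q$; both computations reduce to the same identity $r^jf^{(j)}(s)/(j!\,f(s))=\sum_{i=0}^{j}\binom{d-i}{j-i}q^{j-i}\beta_i$ before dividing by $q^j$, and your treatment of the case $q=0$ coincides with the paper's. The only slip is cosmetic: the relation you invoke is the second, not the first, formula in \eqref{554}.
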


\begin{proof} By the first part of formula \eqref{554} and by \eqref{555}
we have the equalities 
\[
\begin{aligned}
\frac {f(t)}{f(s)}&=\left(\frac{t-s} r\right)^d g\left(q+\frac r {t-s}\right)
\\
&=
\left(\frac{t-s} r\right)^d 
\sum_{i=0}^d \beta_i \left(q+\frac r {t-s}\right)^{d-i}
\\
&=\frac 1 {r^d}
\sum_{i=0}^d \beta_i(q(t-s)+r)^{d-i}(t-s)^i
\\
&= 
\sum_{i=0}^d \beta_i
\sum_{\ell=0}^{d-i} \binom{d-i}{\ell} q^\ell \frac{(t-s)^{\ell+i}}{r^{\ell+i}}\cdotp
\end{aligned}
\]
Hence
\begin{equation}\label{equation:f(t)/f(s)}
\frac {f(t)}{f(s)}=\sum_{j=0}^d \frac{(t-s)^j}{r^j}\sum_{i=0}^j \beta_i\binom{d-i}{j-i} 
 q^{j-i}.
\end{equation}
By Taylor's expansion, we finally get 
\[
\frac{r^jf^{(j)}(s)}{j!f(s)}=\sum_{i=0}^j \beta_i\binom{d-i}{j-i} 
q^{j-i}.
\]
\end{proof}
\begin{remark} 
If we write 
\begin{equation*}
y_i:= \left( \frac rq\right)^i\cdot \frac{f^{(i)}(s)}{i\, !\, f(s)},
\end{equation*}
and
\begin{equation*}
B_j := \frac{ \beta_{j}}{q^j}, \ (0\leqslant j \leqslant d),
\end{equation*}
then Lemmas \ref{hugesystem} and \ref{872} can be written respectively 
\begin{equation}\label{B=Ay}
 \begin{pmatrix} B_0\\ B_1\\
 \vdots \\ 
 B_d
 \end{pmatrix} =
 A
 \begin{pmatrix}y_0\\ y_1\\ \vdots \\ y_d
 \end{pmatrix}
 \quad\text{ and } \quad
 \begin{pmatrix}y_0\\ y_1\\ \vdots \\ y_d
 \end{pmatrix}=
 A^{-1}
 \begin{pmatrix} B_0\\ B_1\\
 \vdots \\ 
 B_d
 \end{pmatrix} 
\end{equation}
 where 
\begin{equation}\label{defAA-1}
 A=
 \Bigl(
 (-1)^{i+j} \binom{d-i}{j-i}\Bigr)_{0\leqslant i,j\leqslant d}
 \quad\text{ and }\quad
 A^{-1}=
 \Bigl(
 \binom{d-i}{j-i}\Bigr)_{0\leqslant i,j\leqslant d}
\end{equation}
and where we extend the definition of the binomial coefficients by setting $\binom n k=0$ for $k<0$.
 
The fact that these two matrices are inverse of each other is the simplest example of {\em inverse relations} (see for instance \cite[p.43--45]{Ri}, where one sign $(-1)^k$ in formula (1) p.~43 should be removed): 
 the inverse of the $(d+1)\times(d+1)$ matrix 
\[
A=\begin{pmatrix}
1&0&0&0&\cdots&0
\\
\displaystyle -\binom d 1&1&0&0&\cdots&0
\\
\displaystyle \binom d 2&\displaystyle -\binom {d-1} 1 &1&0&\cdots&0
\\
\displaystyle -\binom d 3&\displaystyle \binom {d-1}2&\displaystyle - \binom {d-2}1&1&\cdots&0
\\
\vdots&\vdots&\vdots&\vdots&\ddots&\vdots
\\
(-1)^d&(-1)^{d-1}&(-1)^{d-2}&(-1)^{d-3}&\cdots&1
\end{pmatrix}
\] 
is the lower triangular matrix $A^{-1}$ where, in the expression of $A$, we replace each entry by its absolute value.
\end{remark}

The matrices $A^{-1}$ and $A$ enjoy an obvious interpretation through the automorphisms $P(X)\mapsto P(X+1)$ and $P(X)\mapsto P(X-1)$ of the vector space of polynomials with degree $\leqslant d$, equipped with the basis $\{X^d,X^{d-1},\dots,X,1\}$. 
 
\begin{lemma}\label{356bis} We adopt the notations and hypotheses of Lemma \ref{872}. Let $e'$ be an integer such that $0 \leqslant e'\leqslant d-1$. Then we have the equalities
\[
\beta_1= \beta_2 = \cdots = \beta_{e' }=0,
\]
if and only if, we have the equalities
\begin{equation}\label{361bis}
\frac{f^{(k)} (s)}{f(s)} = \frac{d\, !}{(d-k)\, !}\cdot \left( \frac qr\right)^k, \end{equation}
for $1\leqslant k\leqslant e'$.
\end{lemma}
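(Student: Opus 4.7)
The plan is to read off the statement directly from the pair of inverse relations established in Lemmas \ref{hugesystem} and \ref{872}. First I would rewrite \eqref{expressionofderivatives} by multiplying through by $q^j$, obtaining
\[
r^j\, \frac{f^{(j)}(s)}{j!\,f(s)} \;=\; \sum_{i=0}^{j} \binom{d-i}{j-i}\, q^{j-i}\,\beta_i,
\]
which has the advantage of being valid even when $q=0$ (where it reduces to \eqref{whenq=0}). Assuming $\beta_1=\cdots=\beta_{e'}=0$, for each $1\leqslant j\leqslant e'$ only the $i=0$ term survives in the right--hand side (since $\beta_0=1$), which yields $r^j f^{(j)}(s)/(j!\,f(s))=\binom{d}{j}q^j$; dividing by $r^j$ gives precisely \eqref{361bis}.

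For the converse, I would use Lemma \ref{hugesystem} and substitute the assumed values $f^{(i)}(s)/(i!\,f(s)) = \binom{d}{i}(q/r)^i$, valid for $0\leqslant i\leqslant e'$ (trivially at $i=0$), into its formula. This gives, for $1\leqslant j\leqslant e'$,
\[
\beta_j \;=\; (-1)^j\, q^j \sum_{i=0}^{j} (-1)^i \binom{d-i}{j-i}\binom{d}{i}.
\]
The standard identity $\binom{d-i}{j-i}\binom{d}{i}=\binom{d}{j}\binom{j}{i}$ factors $\binom{d}{j}$ out of the inner sum and reduces it to $\binom{d}{j}(1-1)^j=0$ for $j\geqslant 1$, so $\beta_j=0$ for $1\leqslant j\leqslant e'$, as required.

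I do not anticipate any real obstacle: the heart of the argument is the observation that the system \eqref{expressionofderivatives} is triangular with $\binom{d-i}{0}=1$ on the diagonal, so the vanishing of the first $e'$ of the $\beta_i$ is equivalent to the first $e'$ of the normalised derivatives taking their prescribed values. The only minor care required concerns the case $q=0$, but this is resolved by the cleared form of \eqref{expressionofderivatives} above. In the matrix language of \eqref{B=Ay}--\eqref{defAA-1}, the statement simply records the fact that the vector $y=(1,\binom{d}{1},\binom{d}{2},\dots,\binom{d}{e'},\dots)^{\top}$ is the image under $A^{-1}$ of $B=(1,0,\dots,0,\dots)^{\top}$, truncated to the first $e'+1$ coordinates.
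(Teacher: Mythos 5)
Your proof is correct and follows essentially the same route as the paper: the paper's (very terse) proof invokes the inverse pair of triangular relations \eqref{B=Ay} coming from Lemmas \ref{hugesystem} and \ref{872} together with $\beta_0=1$, and your argument simply makes the two directions of that triangular equivalence explicit (the forward direction by reading off \eqref{expressionofderivatives} cleared of denominators, the converse by the identity $\binom{d-i}{j-i}\binom{d}{i}=\binom{d}{j}\binom{j}{i}$, which is exactly the statement that the matrices $A$ and $A^{-1}$ of \eqref{defAA-1} are inverse to each other). Your explicit handling of the case $q=0$ via the cleared form of \eqref{expressionofderivatives} is a small but welcome extra precision, since the paper's matrix formulation divides by $q$.
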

\begin{proof}
This follows from the equalities \eqref{B=Ay}, from the triangular structure of the matrix $A$ and $A^{-1}$
(defined in \eqref{defAA-1}) and from the equality $\beta_0=1$.
\end{proof}

Let us return to the proof of Proposition \ref{central724}.2. 
In the next application of Lemma \ref{356bis}, we exploit the fact that if the integer $k$ is sufficiently large (in terms of $\Lambda^+ (f)$) the $k$--derivative of the polynomial $f$ is a monomial. 

\begin{lemma} \label{896}
Let $d\geqslant 3$ 
and let $f$ and $g$ be polynomials of $\mathcal P_d (\C)$, written as in \eqref{670*} and \eqref{670**}. Let $\Lambda^+(f)$ and $\Lambda^+(g)$ be the two integers defined by \eqref{deflambdas}.
 Suppose that they satisfy
 \begin{equation}\label{e+e>d+1} \Lambda^+ (f) + \Lambda^+ (g) \geqslant d+3,
 \end{equation}
and 
suppose there exists a non affine homography $\mathfrak h_{q,r,s}$ ($(q,r,s) \in \C \times \C^\times \times \C$) such that $\mathfrak h (f) = g$.
 Then we have 
 \[
 q=s=0
 \]
 and 
 \[
 \Lambda^+(f)=\Lambda^+ (g) = d.
 \]
The polynomials $f$ and $g$ have the shapes $f(t) = t^d +\alpha_d$, $g(z) = z^d + \beta_d$ with the relation $\alpha_d \beta_d = r^d$.
\end{lemma}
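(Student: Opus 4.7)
The plan is to apply Lemma~\ref{356bis} to $g$, namely
\[
\frac{f^{(k)}(s)}{f(s)}=\frac{d!}{(d-k)!}\Bigl(\frac{q}{r}\Bigr)^k\qquad(1\leqslant k\leqslant \Lambda^+(g)-1),
\]
and to combine it with the elementary observation that, since $\alpha_1=\cdots=\alpha_{\Lambda^+(f)-1}=0$, one has $f^{(k)}(t)=\frac{d!}{(d-k)!}t^{d-k}$ as soon as $k\geqslant d-\Lambda^+(f)+1$. Substituting these monomial expressions into Lemma~\ref{356bis} yields the identities
\[
r^k s^{d-k}=q^k f(s)\qquad\bigl(d-\Lambda^+(f)+1\leqslant k\leqslant \Lambda^+(g)-1\bigr).
\]
The hypothesis $\Lambda^+(f)+\Lambda^+(g)\geqslant d+3$ is exactly what guarantees that this range contains at least two consecutive integers.

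The next step is a case distinction on $q$. If $q=0$, any single equation in the range gives $r^k s^{d-k}=0$; since $r\neq 0$ and $d-k\geqslant 1$, this forces $s=0$. If $q\neq 0$, the ratio of the equations for two consecutive values of $k$ gives $r=qs$ (with $s\neq 0$, as $s=0$ would otherwise force $q^k\alpha_d=0$), and feeding this back into any one of the equations leaves $f(s)=s^d$. I would rule out the latter branch as follows: with $r=qs$, the full range $1\leqslant k\leqslant \Lambda^+(g)-1$ of Lemma~\ref{356bis} reads $f^{(k)}(s)=\frac{d!}{(d-k)!}s^{d-k}$, so $f(t)$ and $t^d$ have the same Taylor expansion at $s$ up to order $\Lambda^+(g)-1$. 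Thus $s$ is a root of multiplicity at least $\Lambda^+(g)$ of the nonzero polynomial $h(t):=f(t)-t^d=\alpha_{\Lambda^+(f)}t^{d-\Lambda^+(f)}+\cdots+\alpha_d$, whose degree is at most $d-\Lambda^+(f)$. This forces $\Lambda^+(g)\leqslant d-\Lambda^+(f)$, contradicting the hypothesis.

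Hence $q=s=0$ and the homography is $\mathfrak{h}_{0,r,0}(t)=r/t$. The relation \eqref{554} now reads $\alpha_d\, g(z)=z^d f(r/z)$, and matching the coefficient of $z^i$ on both sides yields
\[
\alpha_d\,\beta_{d-i}=\alpha_i\, r^{d-i}\qquad(0\leqslant i\leqslant d).
\]
For $1\leqslant d-i\leqslant \Lambda^+(g)-1$, i.e.\ $d-\Lambda^+(g)+1\leqslant i\leqslant d-1$, the vanishing of $\beta_{d-i}$ together with $r\neq 0$ forces $\alpha_i=0$. Combined with the given $\alpha_1=\cdots=\alpha_{\Lambda^+(f)-1}=0$, the hypothesis $\Lambda^+(f)+\Lambda^+(g)\geqslant d+3$ makes these two vanishing ranges together cover $\{1,\dots,d-1\}$. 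We conclude $f(t)=t^d+\alpha_d$; a symmetric argument (or the same coefficient identity read the other way) gives $g(z)=z^d+\beta_d$, and the $i=0$ case of the coefficient identity yields $\alpha_d\beta_d=r^d$. The main delicate point in this plan is the contradiction in the second branch of the case analysis, where the root-multiplicity bound for $f(t)-t^d$ must be played against its degree to exploit the hypothesis $\Lambda^+(f)+\Lambda^+(g)\geqslant d+3$.
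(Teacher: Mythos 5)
Your proof is correct and follows essentially the same route as the paper: Lemma \ref{356bis} combined with the monomial form of the high derivatives of $f$, the hypothesis $\Lambda^+(f)+\Lambda^+(g)\geqslant d+3$ providing two consecutive admissible indices $k$, a case analysis forcing $q=s=0$, and coefficient identification for $\mathfrak h_{0,r,0}$ at the end. The only variations are minor --- you dispose of the branch $q\neq 0$ (where $r=qs$) by a root-multiplicity versus degree argument on $f(t)-t^d$ instead of the paper's direct comparison of the relations for $k=d-\Lambda^+(f)$ and $k=d-\Lambda^+(f)+1$ --- and you should state explicitly, as the paper does, that the hypothesis $\mathfrak h(f)=g$ forces $f(s)\neq 0$, a fact you use implicitly both to invoke Lemma \ref{356bis} and to exclude the sub-case $s=0$, $q\neq 0$ (where you need $\alpha_d=f(0)\neq 0$).
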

\begin{proof} The assumptions imply $f(s)\not=0$ (see \eqref{defmathfrakh}). To shorten notations, write $\lambda := \Lambda^+ (f)$ and $\lambda':=\Lambda^+ (g)$. We know from \eqref{1leqleqd} that $1\leqslant \lambda,\, \lambda'\leqslant d$ and from \eqref{670*} that 
\[
f^{(k)} (s) = \frac {d\, !}{(d-k)\, !}\cdot s^{d-k},
\]
for all $d-\lambda +1\leqslant k \leqslant d$. Combining with Lemma \ref{356bis} (with $e'= \lambda'-1$), we deduce that, for all $d-\lambda +1 \leqslant k \leqslant \lambda'-1$, 
one has the equality
\begin{equation}\label{crucialbis} s^{d}
= \left( \frac {qs}r\right)^k\cdot f(s).
\end{equation}
Suppose that the interval $[ d-\lambda +1, \lambda'-1]$ contains two positive consecutive integers $k$ and $k+1$ (this assumption is equivalent to the
 inequality \eqref{e+e>d+1}). We apply \eqref{crucialbis} for these values $k$ and $k+1$ and notice that the left--hand side is constant.

\vskip .3cm
\noindent $\bullet$ {\bf Case $s\neq 0$.} So we have $q\neq 0$ by \eqref{crucialbis}. After division, we have the equality
\[
\frac {qs}r =1.
\]
This equality simplifies \eqref{361bis} into
 \[
 s^k \frac{(d-k)\, !}{d\,!}\cdot \frac{f^{(k)} (s)}{f(s)} = 1, 
 \]
 for $k\leqslant \lambda'-1$. We apply this formula with the choices $k= d-\lambda$ and $k=d-\lambda +1$ and this is legal, since we have 
 $d-\lambda<d-\lambda +1\leqslant \lambda'-1$ as a consequence of the assumption \eqref{e+e>d+1}. Thus we obtain the equalities
 \[
 1=
 \begin{cases}
 \displaystyle{s^{d-\lambda} \cdot \frac{\lambda\,!}{d\, !}\cdot \frac{f^{(d-\lambda)}(s)}{f(s)} = \frac{s^d +\alpha_{\lambda} s^{d-\lambda}\, \lambda \, ! (d-\lambda)\, !/(d\, !)}{f(s)},}\\
 \\
 \displaystyle{s^{d-\lambda +1 }\cdot \frac{(\lambda -1)\,!}{d\, !}\cdot \frac{f^{(d-\lambda +1)}(s)}{f(s)} = \frac{s^d}{f(s)}\cdotp}
 \end{cases}
 \]
 Equating these two expressions and 
 recalling that $\alpha_{\lambda}\neq 0$, we arrive at a contradiction.
 
 \vskip .3cm
 
\noindent $\bullet$ {\bf Case $q\neq 0$.} 
The hypothesis
 \eqref{e+e>d+1} concerning $f$ and $g$ is symmetric. So, by exchanging the r\^oles, we may study the existence of a non affine homography $\mathfrak h_{s,r,q}$ transforming $g$ into $f$ (see \eqref{inverse}). By the above alinea, such a
 homography $\mathfrak h_{s,r,q}$ with $q \neq 0$ does not exist. So we are led to study the remaining case $q=s=0$.

 \vskip .3cm
\noindent $\bullet $ {\bf Case $q=s=0$.} In that case we are asking if, for some $r\neq 0$, the homography 
\[
\mathfrak h_{0,r,0} (t) = \frac rt,
\]
can satisfy $\mathfrak h _{0,r,0} (f) = g$, or in an equivalent form $\mathfrak h_{0, r, 0} (g) =f$. By definition of $\lambda$ we have $\alpha_{\lambda} \neq 0$. 

\noindent $\bullet$ Suppose that $\lambda \leqslant d-1$. Then 
the polynomial $g =\mathfrak h _{0,r,0} (f) $ has its coefficient
$\beta_{d-\lambda}\neq 0$. Since $d-\lambda\geqslant 1$, we deduce the inequality $\lambda '\leqslant d-\lambda$. Such an inequality is incompatible with the hypothesis $\lambda +\lambda'\geqslant d+3$.

\noindent $\bullet$ So we are left with the case $\lambda=d$. Equivalently we have $f(t) = t^d +\alpha_d$, with $\alpha_d \neq 0$. We check that 
$[\mathfrak h_{0,r,0} (f) ](z)= z^d + r^{d}/\alpha_d $ by Lemma \ref{998} \eqref{554}. This is the last statement of Lemma \ref{896}.
\end{proof}
The proof of Proposition \ref{central724}.2 is complete now. 
 
 %%%%%%%%%%%%%%%
\subsection{Proof of Proposition \ref{central724}.3}\label{discussion} 

\subsubsection{Heuristic considerations.}\label{Heurcons} 
 
Before entering the proof of Proposition \ref{central724}.3 itself, we consider a related general question involving a system of linear non homogeneous equations.

Let $(q,r,s)\in\C\times \C^\times \times \C$, $d\geqslant 3$, $a,b$ with $1\leqslant a, b \leqslant d-1$, $\mathcal I$ and $\mathcal J$ two subsets of $\{1,\dots,d\}$ with respectively $a$ and $b$ elements. Let us consider the following problem. 

\begin{quote}
{\em Does there exist monic polynomials $f,g$ of degree $d$,
\[
f(t)=\alpha_0t^d+\cdots+\alpha_d,\quad g(z)=\beta_0z^d+\cdots+\beta_d
\quad \text{ with }\quad \alpha_0= \beta_0=1
\]
 and discriminants different from zero, which satisfy
 \begin{equation}\label{Equations:IandJ}
 \text{$\alpha_i=0$ for $i\in \mathcal I$ and $\beta_j=0$ for $j\in \mathcal J$ }
 \end{equation}
 and $\mathfrak h_{q,r,s} (f) = g$? }
\end{quote}

Assuming $\mathfrak h_{q,r,s} (f) = g$, there exists $\kappa\in\C^\times$ such that 
\[
\kappa f(t)=(t-s)^d g \left( q+\frac r {t-s}\right),
\]
(to be compared with \eqref{554})
that is 
\[
\kappa f(t)= \sum_{i=0}^d \beta_i(t-s)^i (qt+r-qs)^{d-i}.
\]
Notice that $\kappa f(s)= r^d$. We write \eqref{equation:f(t)/f(s)} as
\[
\frac {f(t)}{f(s)}= \frac 1 {r^d} (q(t-s) + r )^d+\sum_{j=0}^d \frac{(t-s)^j}{r^j}\sum_{i=1}^j \beta_i\binom{d-i}{j-i} 
 q^{j-i}.
 \]
Hence the coefficient of $t^h$ in $\kappa f(t)$ is 
\begin{equation*}
\kappa\alpha_{d-h}= \binom d h q^h (r-qs)^{d-h}+
\sum_{i=\Lambda^+(g)}^d \beta_i \sum_{j=\max\{i,h\}}^d \binom j h \binom {d-i}{j-i}(-s)^{j-h} r^{d-j}q^{j-i}.
\end{equation*}
For $h=d$, since $\alpha_0=1$, this gives $\kappa =g(q)$. 

 The conditions \eqref{Equations:IandJ} 
 are equivalent to a system of $a+1$ linear non homogeneous equations 
\[
\kappa=g(q), \quad \alpha_i=0 \quad (i\in \mathcal I)
\]
with $d-b+1$ unknowns 
\[
\kappa,\quad \beta_j \quad (1\leqslant j\leqslant d,\; j\not\in \mathcal J).
\]

\noindent
{\bf Heuristic.} 
According to the above discussion, subject to the non vanishing of some determinants, we may expect that there is a Zariski closed set of $(q,r,s)$ such that, 
outside this set, 
\\
$\bullet$ when $a+b<d$, then there are infinitely many solutions $(f,g)$;
\\
$\bullet$ when $a+b=d$, there is a unique solution;
\\
$\bullet$ for $a+b>d$, there is no solution.

For instance, given integers $\lambda$ and $\lambda'$ in the interval $[1,d-1]$, the conditions $\Lambda^+(f)\geqslant \lambda$ and $\Lambda^+(g)\geqslant \lambda'$ are a special case of \eqref{Equations:IandJ} with 
\[
\mathcal I=\{1,\dots,\lambda-1\}, \quad \mathcal J=\{1,\dots,\lambda'-1\}, 
\]
$a=\lambda-1$ and $b=\lambda'-1$. 
When $\lambda +\lambda'=d+2$ we may expect that, outside a Zariski closed set of $(q,r,s)$, there is a unique solution. 
In this case item 2 of Proposition 
 \ref{central724} 
shows that $\lambda=\Lambda^+(f)$ and $\lambda'=\Lambda^+(g)$.

%%%%%%%%%
\subsubsection{The proof itself.} 

We are now concerned with the proof of the last item of Proposition \ref{central724}. Actually the proof below gives more information. The first step is

\begin{lemma} \label{Lemma:FirstStep}
Let $d \geqslant 3$ and let $(q,r,s)\in(\C^\times)^3$ such that
\begin{equation}\label{rneqneq}
r\neq qs \text{ and } r\neq (d-1) qs.
\end{equation}
Then there exists a unique pair $(f,g)$ of monic polynomials with complex coefficients and with degree $d$, such that
\begin{equation}\label{cond1323}
\Lambda^+ (f)=3,\, \Lambda^+ (g) =d-1 \text { and } 
\mathfrak h_{q,r,s} (f) = g.
\end{equation}
\end{lemma}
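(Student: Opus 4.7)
The plan is to translate the conditions $\Lambda^+(f)=3$ and $\Lambda^+(g)=d-1$ into constraints on the coefficients of $f$ and $g$, apply Lemma \ref{356bis} to convert the homography equation into a linear system in the unknowns, and then solve that system recursively.

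From $\Lambda^+(f) \geqslant 3$ and $\Lambda^+(g) \geqslant d-1$ I write
\[
f(t) = t^d + \alpha_3 t^{d-3} + \cdots + \alpha_d, \qquad g(z) = z^d + \beta_{d-1} z + \beta_d.
\]
Lemma \ref{356bis} applied with $e' = d-2$ states that, granted $g = \mathfrak{h}_{q,r,s}(f)$, the identities $\beta_1 = \cdots = \beta_{d-2} = 0$ are equivalent to the $d-2$ equations
\[
\frac{f^{(k)}(s)}{f(s)} = \frac{d!}{(d-k)!}\left(\frac qr\right)^k, \qquad 1 \leqslant k \leqslant d-2.
\]
I would process these in decreasing order of $k$. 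Because $\alpha_1=\alpha_2=0$, the derivative $f^{(d-2)}(t)=\tfrac{d!}{2}t^2$ contains no unknown, so the $k=d-2$ equation immediately yields $f(s)=s^2(r/q)^{d-2}$. The $k=d-3$ equation involves only $\alpha_3$ beyond the leading term $\tfrac{d!}{6}s^3$, and gives
\[
\alpha_3 = \binom{d}{3}\,\frac{s^2(r-qs)}{q};
\]
the first hypothesis $r\neq qs$ guarantees $\alpha_3 \neq 0$, so $\Lambda^+(f)=3$ with equality. For each $k\in\{d-4,\dots,1\}$, the highest-index unknown appearing in $f^{(k)}(s)$ is $\alpha_{d-k}$, with pivot $k!\neq 0$; hence the $k$-th equation determines $\alpha_{d-k}$ uniquely from the previously computed $\alpha_3,\dots,\alpha_{d-k-1}$ and from the known value of $f(s)$. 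Finally $\alpha_d$ is fixed by the identity $f(s)=s^d + \alpha_3 s^{d-3} + \cdots + \alpha_d$. This gives both existence and uniqueness of $f$, hence of $g = \mathfrak{h}_{q,r,s}(f)$.

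It remains to verify that $\Lambda^+(g)=d-1$ exactly. Since $\alpha_1=\alpha_2=0$, one has $f^{(d-1)}(s)=d!\,s$ and $f^{(d)}(s)=d!$; substituting these along with $f(s)=s^2(r/q)^{d-2}$ into Lemma \ref{872} with $j=d-1$ and $j=d$, a short calculation yields
\[
\beta_{d-1} = d\,q^{d-2}\,\frac{r-qs}{s}, \qquad \beta_d = q^{d-2}\,\frac{(r-qs)(r-(d-1)qs)}{s^2},
\]
where the formula for $\beta_d$ uses the factorisation $r^2-dqrs+(d-1)q^2s^2 = (r-qs)(r-(d-1)qs)$. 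The hypothesis $r\neq qs$ gives $\beta_{d-1}\neq 0$, and the hypothesis $r\neq(d-1)qs$ gives $\beta_d\neq 0$; the latter is the natural non-degeneracy condition ensuring that the binary form associated with $g$ has a nonzero constant coefficient, in line with the convention $a_0a_d\neq 0$ adopted in \S\ref{Section:Examples}.

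The main obstacle is the bookkeeping for the triangular recursion and the verification of the factorisation used in the formula for $\beta_d$; both reduce to elementary algebra once the simplifications $f^{(d-2)}(t) = \tfrac{d!}{2}t^2$, $f^{(d-1)}(s) = d!\,s$, and $f^{(d)}(s) = d!$ (all consequences of $\alpha_1 = \alpha_2 = 0$) are exploited at the top of the derivative ladder.
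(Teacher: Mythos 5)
Your proposal is correct, and the explicit values you obtain ($f(s)=s^2(r/q)^{d-2}$, $\alpha_3=\binom d3 s^2(r-qs)/q$, $\beta_{d-1}=dq^{d-2}(r-qs)/s$, $\beta_d=q^{d-2}(r-qs)(r-(d-1)qs)/s^2$) agree with the paper's, but you reach them by a genuinely different (dual) route. The paper parametrizes the problem on the $g$-side: it writes $\kappa f(t)=(qt+r-qs)^d+\beta_{d-1}(qt+r-qs)(t-s)^{d-1}+\beta_d(t-s)^d$ and turns the three conditions $\alpha_0=1$, $\alpha_1=\alpha_2=0$ into the $3\times3$ linear system \eqref{equation:system} in $(\beta_{d-1},\beta_d,\kappa)$, whose determinant $drs^2\neq 0$ gives existence and uniqueness at one stroke, after which $\alpha_3\neq0$ is checked by expanding the coefficient of $t^{d-3}$. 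You instead parametrize on the $f$-side: you convert $\Lambda^+(g)\geqslant d-1$ via Lemma \ref{356bis} (with $e'=d-2$) into derivative conditions at $s$, exploit that $f^{(d-2)},f^{(d-1)},f^{(d)}$ are explicit monomial expressions to pin down $f(s)$, and then solve a triangular recursion (pivot $k!$) for $\alpha_3,\dots,\alpha_d$, finally reading $\beta_{d-1},\beta_d$ off Lemma \ref{872}. Your route recycles the machinery the paper develops for the rest of Proposition \ref{central724} and avoids the binomial-expansion check of $\alpha_3\neq0$, at the cost of handling $d-2$ unknowns rather than three. Two small points you should make explicit: (i) Lemmas \ref{872} and \ref{356bis} carry the hypothesis $f(s)\neq0$, which in the uniqueness direction is automatic from \eqref{555} and in the existence direction holds by construction ($f(s)=s^2(r/q)^{d-2}\neq0$), and this is also what makes $g:=\mathfrak h_{q,r,s}(f)$ well defined, monic of degree $d$ via \eqref{554}; (ii) for $d=3$ (and $d=4$) the steps ``$k=d-3$ determines $\alpha_3$'' and ``$k\in\{d-4,\dots,1\}$'' partially collapse — for $d=3$ the value of $\alpha_3$ comes from the identity $f(s)=s^3+\alpha_3$ rather than from a $k=d-3$ equation, though it still equals $\binom33 s^2(r-qs)/q$, so the conclusion is unaffected. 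Neither point is a gap, only a line each of bookkeeping.
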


 \begin{proof}
 
Write $f$ and $g$ as in \eqref{670*} and \eqref{670**}. By the second and the third conditions of \eqref{cond1323} we can write
\begin{equation}\label{1324}
g(z)=z^d+\beta_{d-1}z+\beta_d, \qquad 
\kappa f(t)= (qt+r-qs)^d+\beta_{d-1} (qt+r-qs)(t-s)^{d-1}+\beta_d(t-s)^d,
\end{equation}
for some complex number $\kappa\neq 0$ (see Lemma \ref{998}.2). 
Since $f$ and $g$ are monic, we have $\alpha_0=\beta_0=1$, that is
\[
\kappa=q^d+q\beta_{d-1}+\beta_d.
\]
The inequality $\Lambda^+(f)\geqslant 3$ is equivalent to the double equality $\alpha_1=\alpha_2=0$. So we are led to consider 
 the system of three non--homogeneous linear equations in three unknowns $\beta_{d-1}$, $\beta_d$ and
 $\kappa$
\begin{equation}\label{equation:system}
\left\{
\begin{matrix}
\hfill
q\beta_{d-1}&+&
\hfill\beta_d&-&\kappa&=&-q^d\hfill
\\
\hfill
(r-dqs)\beta_{d-1}&-&\hfill
ds\beta_d&&&=&-dq^{d-1}(r-qs)\hfill
\\
\hfill
(dqs-2r)s\beta_{d-1}&+&\hfill
d s^2\beta_d&&&=&-dq^{d-2}(r-qs)^2.\hfill
\end{matrix}
\right.
\end{equation}
The determinant is 
\[
\det\begin{pmatrix}
q&1&-1 
\\
r-dqs&-ds&0 
\\
(dqs-2r)s&d s^2&0
\end{pmatrix}=drs^2.
\]
Since $qrs\not=0$,
there is a unique solution
\[
\beta_{d-1}=d q^{d-2}\left(\frac r s -q\right),
\quad
\beta_d=q^{d-2}\left(\frac r s -q\right)\left(\frac r s -(d-1)q\right),
\qquad
\kappa=q^{d-2}\left(\frac r s \right)^2.
\]
The assumption \eqref{rneqneq} ensures $\beta_{d-1}\not =0$ and $\beta_d \neq 0$. 

Let us check that $\alpha_3\not=0$, which means $\Lambda^+(f)=3$. By considering the coefficient of $t^{d-3}$ on both sides of the last equality of \eqref{1324} and by the above values of $\beta_d$ and $\beta_{d-1}$ we obtain 
\[
\begin{aligned}
\kappa\alpha_3
&= \binom d 3 q^{d-3}(r-qs)^3+\beta_{d-1} \binom {d-1} 2 s^2\left( r-\frac d 3 qs\right)-\beta_d \binom d 3 s^3 
\\
&= \binom d 3 q^{d-3}(r-qs)^3+ \binom {d-1} 2 dq^{d-2}s\left( r-\frac d 3 qs\right) (r -qs) \\
& \qquad \qquad \qquad \qquad \qquad \qquad \qquad \qquad - \binom d 3 s q^{d-2}( r -qs)\bigl( r -(d-1)qs\bigr),
\\
&= \binom d 3 q^{d-3}r^2(r-qs)\not=0,
\end{aligned}
\]
by assumption. 
 \end{proof}

 Here is how to check that the polynomial $g(z)$ as defined in \eqref{1324} with the above values of $\beta_d$ and $\beta_{d-1}$ has a discriminant different from zero, which is a necessary condition for $g(z)$ to belong to $\mathcal P_d (\C)$. Assume there exists $\rho \in \C$ such that $g(\rho) = g' (\rho) =0$. The condition $g'(\rho) =0$ is equivalent to 
 \begin{equation}\label{Equation:496}
\rho^{d-1} = - \frac{\beta_{d-1}}{d}\cdotp
\end{equation}
Combining this value with $g(\rho) =0$ yields
\[
0 = g(\rho) = \rho \left( - \frac {\beta_{d-1}} d\right) + \beta_{d-1} \rho + \beta_d.
\]
Hence, if $\rho$ exists, its value can only be 
\[
\rho = \frac d{1-d}\cdot \frac{\beta_d}{\beta_{d-1}}= \frac{r/s-(d-1)q}{1-d},
\]
thanks to the values already computed for $\beta_d$ and $\beta_{d-1}$. From \eqref{Equation:496} we get the condition for the discriminant of $g$ to be nonzero:
\[
\left( \frac{r/s -(d-1) q}{1-d}\right)^{d-1} \neq -q^{d-2} (r/s-q).
\]
 To complete the proof of the item 3 of Proposition \ref{central724}, we produce an instance of two elements $f$, $g$, of $\mathcal P_d(\Q)$ (i.e. with nonzero discriminant) with $\Lambda^+(f)=3$ and $\Lambda^+(g)=d-1$ and $\mathfrak h_{q,r,s} (f) = g$. Thanks to Remark \ref{remark:d=3} we may assume $d\geqslant 4$. 
Take 
$q=1$, $r=2$, $s=1$, $\beta_{d-1}=d$, $\beta_d=3-d$, $\kappa=4$, in other words we have
\[
\gamma=\begin{pmatrix} 1&1\\1&-1\end{pmatrix}, \quad g(z)=z^d+dz-d+3,
\]
so that $g(q)=4$, 
\begin{equation}\label{1336}
f(t)=\frac{(t-1)^d}{g(q)} g\left(\frac {t+1}{t-1}\right)=\frac 1 4 \Bigl(
(t+1)^d +d(t+1)(t-1)^{d-1}+(3-d)(t-1)^d\Bigr).
\end{equation}
One checks $\alpha_0=1$, $\alpha_1=\alpha_2=0$, $\alpha_3=\displaystyle \binom d 3 \not=0$, 
\[
f(t) = t^d+ \binom{d}{3} t^{d-3} + \alpha_4t^{d-4}+ \cdots+\alpha_d,
\]
$f(1)=2^{d-2}$, $f(s)g(q)=r^d$, $\Lambda^+(f)=3$. 

The derivative of $g$ is $g'(z)=d(z^{d-1}+1)$. Let $\rho$ be one its roots. It satisfies $\rho^d =-\rho$ so we have the equality
\[
g(\rho) = (d-1) \rho -d+3,
\] 
and $g(\rho)$ does not vanish, since $\rho$ has modulus one and $d\geqslant 4$. Since $g(z)$ and $g'(z)$ do not vanish simultaneously,
the polynomial belongs to $\mathcal P_d (\C)$. The proof of the item 3 of Proposition \ref{central724} is complete.

%%%%%%%%%%
\subsubsection{Comments}

In Lemma \ref{Lemma:FirstStep} we assume $qs\not=0$. 
There is no example of pair $(f,g)$ of monic polynomials of degree $d$ satisfying \eqref{cond1323}
with $qs=0$, $\Lambda^+(f)=3$ and $\Lambda^+(g)=d-1$. 
Indeed for $q=0$ the system \eqref{equation:system} yields
\[
\left\{
\begin{matrix}
r\beta_{d-1}-ds\beta_d&=0
\\
-2rs\beta_{d-1}+ds^2\beta_d&=0
\end{matrix}
\right.
\]
which has no solution satisfying $r\not=0$ and $\beta_{d-1}\not=0$, while for $s=0$ and $q\not=0$ this system \eqref{equation:system} yields $dq^{d-2}r^2=0$, which is not allowed.

We now present some examples of pairs $(f,g)$ of monic polynomials of degree $d$ satisfying $\mathfrak h_{q,r,s} (f) = g$ and $\Lambda^+(f) +\Lambda^+ (g)<d+2$.

\begin{enumerate}
\item Here is an example with $q=0$, $s\not=0$, $\Lambda^+ (f) =2$, $\Lambda^+ (g) =d-1$:
\[
g(z)=z^d+\frac {ds}r z+1,\qquad f(t)=(t-s)^d g\left(\frac r {t-s}\right)=(t-s)^d+ds(t-s)^{d-1}+r^d.
\]

\item When $\lambda+\lambda'=d$, explicit solutions $(f,g)$ with $\Lambda^+(f)=\lambda$ and $\Lambda^+(g)=\lambda'$ are given with $q=s=0$ by trinomial forms
\[
f(t)=t^d+ \alpha_\lambda t^{d-\lambda}+\alpha_d,
\qquad
g(z)=z^d+\beta_{d-\lambda}z^\lambda +\beta_d
\]
with
\[
\alpha_\lambda \beta_d = \beta_{d-\lambda}r^\lambda, \ \ \alpha_d \beta_d = r^d.
\]
\item When $\lambda+\lambda'<d$, explicit solutions are given with $q=s=0$ by quadrinomial forms
\[
f(t)=t^d+ \alpha_\lambda t^{d-\lambda}+\alpha_{d-\lambda'} t^{\lambda'}+\alpha_d,
\qquad
g(z)=z^d+\beta_{\lambda'}z^{d-\lambda'} +\beta_{d-\lambda}z^\lambda +\beta_d
\]
with
\[
\alpha_d\beta_{\lambda'}=\alpha_{d-\lambda} r^{\lambda'}, \quad
\alpha_d\beta_{d-\lambda}=\alpha_{\lambda} r^{d-\lambda}, \quad \alpha_d\beta_d=r^d.
\]
 
 \end{enumerate} 
 
%%%%%%%%%%%%%%%%%%%%%%%%%%%%%%%%%%%%%%%%%%%%%%%%%%%%%%%%%%%%%% 
 \section{Proofs of Theorem \ref{486} and Corollary \ref{Corollary2star}}\label{Proofs486et2star}
 
 %%%%%%%%%%%%%%%%%%%%%
\subsection{Proof of Theorem \ref{486}}\label{1033} 

Let $F$ and $G$ be two forms of the reduced set $\mathcal E$ of $ \Bin (d, \K)$. 
We suppose that they are written as in \eqref{F=G=} with $a_0a_db_0b_d \neq 0$. 
By assumption we have $\min\{\Lambda^+ (F), \, \Lambda^+ (G)\}\geqslant (d+3)/2$.
 Let $f$ be the monic polynomial associated to $F$ defined by \eqref{669} and written as in \eqref{670*} with $\alpha_i=a_i/a_0$ for $0\leqslant i\leqslant d$ and similarly let
$g$ be the monic polynomial associated to $G$ written as \eqref{670**} with $\beta_i=b_i/b_0$ for $0\leqslant i\leqslant d$. 
They satisfy \eqref{727}. Our aim is to prove that if there exists a matrix $\gamma$ (written as in \eqref{abcd}) such that $F\circ \gamma
 =G$, then $F=G$. Assume such a $\gamma$ exists and let $\tilde \gamma =\mathfrak h$, the homography attached to $\gamma$. 
Replacing $\gamma$ with $\gamma^{-1}$ in Lemma \ref{Lemma0.2} we deduce $\mathfrak h (g) =f$. 
 
 \vskip .2cm
 \noindent Suppose that $u_3\neq 0$, then $\mathfrak h$ is a non affine homography exchanging $\mathcal Z(f)$ and $\mathcal Z (g)$.
 By Proposition \ref{central724}.2, we deduce that $\mathfrak h$ is of the form $\mathfrak h_{0,r,0}$ 
 and that both $F$ and $G$ are binomials. Thus $\gamma$ satisfies $u_1=u_4=0$. The hypothesis $F\circ \gamma =G$ leads to the equality $F(u_2Y, u_3 X) = G(X,Y)$. This contradicts the item 3 of Definition \ref{Def:reduced}.
 
 \vskip .2cm
 \noindent Hence $u_3=0$ and $\mathfrak h$ is an affine homography exchanging $\mathcal Z(f)$ and $\mathcal Z(g)$. 
 By Proposition \ref{central724}.1, the homography $\mathfrak h$ has to be a homothety. The matrix $\gamma$ satisfies $u_2=u_3=0$. The hypothesis $F\circ \gamma =G$ leads to the equality $F(u_1X, u_4Y) =G(X,Y)$. Item 2 of Definition \ref{Def:reduced} implies $F=G$. 
 
 \vskip .2cm
\noindent 
This completes the proof of Theorem \ref{486}. 

 %%%%%%%%%%%%%%%%%%%%%
 \subsection{Proof of Corollary \ref{Corollary2star}} \label{proof2.2} 
 
 The proof will use two auxiliary results.
 
 \begin{lemma}\label{Lemma:Wd1}
 Let $d\geqslant 3$ and $\mathcal W_d^{(1)}(\K)$ be a subset of $\Bin (d,\Z)$ such that for any $F\in \mathcal W_d^{(1)}(\K)$ we have
 $a_0\not=0$, $a_{d-1} =a_d =1$. 
 Then the set $\mathcal W_d^{(1)}(\K)$ is $\K$--dilation free. 
 \end{lemma}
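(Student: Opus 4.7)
The plan is a direct calculation from the definition of $\K$-dilation-free. Take $F,G\in \mathcal W_d^{(1)}(\K)$ written as
\[
F(X,Y)=a_0X^d+\cdots+a_dY^d,\qquad G(X,Y)=b_0X^d+\cdots+b_dY^d,
\]
and suppose there exist $u,v\in\K^\times$ with $F(uX,vY)=G(X,Y)$. Expanding the left-hand side and matching coefficients gives the system
\[
a_iu^{d-i}v^i=b_i\qquad(0\leqslant i\leqslant d).
\]

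I would then exploit the two normalizations $a_{d-1}=a_d=b_{d-1}=b_d=1$. The case $i=d$ reads $v^d=1$, and the case $i=d-1$ reads $uv^{d-1}=1$. Multiplying the second by $v$ and using $v^d=1$ yields $u=v$, and then $u^d=v^d=1$. Consequently, for every $i$,
\[
u^{d-i}v^i=u^d=1,
\]
so $b_i=a_i$ for all $i$ and $F=G$.

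The argument is routine and there is no real obstacle; everything is forced by the two leading--in--$Y$ coefficients being equal to $1$. Note that the argument is valid uniformly for $\K=\Z,\Q,\R,\C$, since $v^d=1$ is needed only in $\K$; the hypothesis $a_0\neq 0$ is not used for dilation-freeness (it ensures membership in $\Bin(d,\K)$ via the nonzero discriminant assumption, which is needed when this lemma is combined with the other conditions defining $\mathcal U_d^{(1)}(\K)$).
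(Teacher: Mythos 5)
Your proposal is correct and follows essentially the same route as the paper: comparing the coefficients of $XY^{d-1}$ and $Y^d$ in $F(uX,vY)=G(X,Y)$ to get $uv^{d-1}=1$ and $v^d=1$, hence $u=v$ with $u^d=1$, and therefore $F=G$. The paper merely dresses the same computation in the language of homotheties and diagonal matrices via Lemma \ref{iso<->auto}, so there is no substantive difference.
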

 
 \begin{proof}
 Let $F$ and $G$ be two elements in $\mathcal W_d^{(1)} (\K)$ written as in \eqref{F=G=} 
and let $\mathfrak h_{q,0}$ be a homothety which exchanges the zeroes of the polynomials $f$ and $g$ associated to $F$ and $G$ written as 
\eqref{670*} and \eqref{670**}. By Lemma \ref{iso<->auto}, 
all the 
$\gamma\in {\GL}(2, \K)$ such that $\tilde \gamma = \mathfrak h_{q,0}$ and $F\circ \gamma = G$
have the shape 
\begin{equation}\label{t00u}
\gamma = \begin{pmatrix} u& 0\\ 0 & v
\end{pmatrix},
\end{equation}
where $u$ and $v$ are complex numbers different from zero. Returning to the explicit expression of $F$ and $G=F\circ \gamma$
we have 
\[
G(X,Y)=F(uX,vY)=a_0u^dX^d+a_1u^{d-1}vX^{d-1}Y+\cdots+a_{d-1}uv^{d-1}XY^{d-1}+a_dv^dY^d,
\]
hence the equalities $1=b_d = v^d a_d =v^d$ and $1= b_{d-1} =uv^{d-1} a_{d-1} =uv^{d-1}$. They imply 
that $u=v$ and $u^d=1$. So $G(X,Y) = F(uX, uY) = u^d F(X,Y) = F(X,Y)$. Hence $\mathcal W_d^{(1)} (\K)$ is $\K$--dilation--free.
 \end{proof}

 \begin{lemma}\label{Lemma:rigid}
 Let $d\geqslant 3$ and $\mathcal W_d^{(2)}(\Z)$ be a subset of $\Bin (d,\Z)$ such that for any $F\in \mathcal W_d^{(2)}(\Z)$ written as in \eqref{defF} we have
 \begin{enumerate}
 \item $a_0>0$, $a_d\not=0$, 
$a_0$ and $a_d$ are $d$--free,
 \item if there is an odd index $k$ such that $a_k\not=0$, then for the smallest such $k$ we have $a_k>0$.
 \end{enumerate}
 Then
 \\
 (a) The set $\mathcal W_d^{(2)}(\Z)$ is $\Q$--dilation free. 
 \\
 (b)
 Let $F\in \mathcal W_d^{(2)}(\Z)$ and let
\[
 \gamma = \begin{pmatrix} u& 0\\ 0 & v
\end{pmatrix} \in {\GL}(2, \Q)
\]
be such that $F\circ\gamma=F$. Then 
 \[
 \gamma = 
 \begin{cases} \pm {\rm Id} & \text{ if $F$ is not a binary form with squared arguments,}
 \\ 
\displaystyle \pm {\rm Id}\; \text { or } \; \pm \begin{pmatrix}1 & 0\\ 0 & -1 \end{pmatrix} &\text{ if $F$ is a binary form with squared arguments.}
 \end{cases}
 \]
 \end{lemma}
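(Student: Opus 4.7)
My plan for both parts is to expand the identity coefficient by coefficient, use the end-coefficient conditions together with $d$-freeness to pin down $u$ and $v$ to $\{\pm 1\}$, and finish with a short case analysis in which condition (c) plays the only nontrivial role.

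For part (a), given $F,G\in\mathcal W_d^{(2)}(\Z)$ and $u,v\in\Q^\times$ with $F(uX,vY)=G(X,Y)$, identifying coefficients of $X^{d-i}Y^i$ yields
\[
b_i=a_i\,u^{d-i}v^i\qquad(0\leqslant i\leqslant d),
\]
so in particular $b_0=a_0u^d$ and $b_d=a_dv^d$. Writing $u=c/e$ in lowest terms and comparing prime factorizations, the $d$-freeness of $a_0$ and $b_0$ forces $|c|=e=1$, and the positivity of $a_0,b_0$ then fixes $u^d=1$. An analogous (sign-unconstrained) descent applied to $v$ gives $v\in\{\pm 1\}$. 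Thus $(u,v)$ is one of at most four pairs; the combinations yielding $u^{d-i}v^i=1$ for every $i$, namely $(1,1)$ and (when $d$ is even) $(-1,-1)$, give $b_i=a_i$ directly, whereas the remaining combinations give $b_i=(-1)^i a_i$. In the latter case I invoke condition (c): if $F$ has a nonzero odd-indexed coefficient, its smallest such index $k$ gives $a_k>0$ for $F$ while $b_k=-a_k<0$ for $G$, contradicting condition (c) applied to $G$; and if no such index exists, then $b_i=(-1)^ia_i$ collapses to $b_i=a_i$ anyway. In every admissible case $F=G$, so $\mathcal W_d^{(2)}(\Z)$ is $\Q$-dilation-free.

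For part (b), taking $G=F$ simplifies the analysis: the coefficient identity becomes $a_i(u^{d-i}v^i-1)=0$ and only the constraints $u^d=v^d=1$ (from $a_0a_d\neq 0$) are relevant; conditions (b) and (c) play no role here. Over $\Q$, $u^d=v^d=1$ forces $u,v\in\{\pm 1\}$, with $u=v=1$ whenever $d$ is odd, in which case $\gamma=\Id$. For $d$ even, $u^{d-i}v^i=(uv)^i$ and the relation reduces to $(uv)^i=1$ whenever $a_i\neq 0$. The choice $uv=1$ is automatic and yields $\gamma\in\{\pm\Id\}$, while $uv=-1$ forces every odd-indexed coefficient of $F$ to vanish, i.e.\ $F$ is a binary form with squared arguments, and yields the two further automorphisms $\pm\mathrm{diag}(1,-1)$. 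Matching these possibilities against the statement recovers exactly the claimed dichotomy.

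No step is a serious obstacle. The one spot requiring care is the descent producing $u^d=1$ and $v\in\{\pm 1\}$, which has to combine $d$-freeness on \emph{both} $F$ and $G$ with positivity of $a_0,b_0$ to eliminate the sign $u^d=-1$; once that is in place, condition (c) does all the residual sign-bookkeeping in part (a), and part (b) is a direct consequence of the coefficient identity.
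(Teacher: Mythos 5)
Your proof is correct and follows essentially the same route as the paper: comparing coefficients of $F(uX,vY)=G(X,Y)$, using $d$--freeness of the end coefficients (plus positivity of $a_0,b_0$) to force $u,v\in\{\pm1\}$ with $u^d=1$, and then letting the sign condition on the smallest nonzero odd-index coefficient rule out $b_i=(-1)^ia_i$ unless all odd coefficients vanish. Your explicit treatment of part (b), reducing to $a_i\bigl((uv)^i-1\bigr)=0$ for $d$ even, is a correct spelling-out of what the paper leaves implicit.
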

 
 \begin{proof}
 (a)
Let $F$ and $G$ be two elements in $\mathcal W_d^{(2)} (\Z)$ written as in \eqref{F=G=} and $u,v$ two nonzero rational numbers such that $F(uX,vY)=G(X,Y)$. Since the coefficients $a_0$ and $b_0$ of $X^d$ in $F$ and $G$ respectively are $d$--free integers, and since $a_0>0$ and $b_0>0$, the equality $a_0u^d=b_0$ implies $u^d=1$, and similarly $v^d= \pm 1$. 

\noindent
 -- If $u=v \, (=\pm 1)$ then $F=G$.
 \\
 -- If $u=1$ and $v=-1$, then $F(X,Y)$ and $F(X,-Y)$ belong to $\mathcal W_d^{(2)} (\Z)$. If there is some odd $k$ such that $a_k\neq 0$, then the least such $k$ satisfies $a_k>0$. Since $F(X,-Y)$ also belongs to $\mathcal W_d^{(2)} (\Z)$, we deduce that $(-1)^k a_k$ is also positive. This gives a contradiction. So both $F$ and $G$ have squared arguments. They are equal.
 \\
 -- If $u=-1$ and $v=1$, then $d$ is even necessarily. Now $F(X,Y)$ and $F(-X, Y)$ both belong to $\mathcal W_d^{(2)} (\Z)$. If there is some odd $k$ such that $a_k\neq 0$, then the least such $k$ satisfies $a_k>0$. Since $F(-X,Y)$ also belongs to $\mathcal W_d^{(2)} (\Z)$, we deduce that $(-1)^{d-k} a_k$ is also positive. The end of the proof is as above. 
 \end{proof}
 
 \begin{proof}[Proof of Corollary \ref{Corollary2star} ]
To prove the first item we are going to use Theorem \ref{486}. 
We first check item 3 in the Definition \ref{Def:reduced} of reduced sets. 
Since for $d\geqslant 3$ a binary form $F$ such that $1\leqslant \Lambda^+(F)\leqslant d-1$ is not a binomial form, it follows that no binomial binary form belongs to $\mathcal U_d^{(1)} (\Z)$ nor to $\mathcal U_d^{(2)} (\Z)$. 

Lemma \ref{Lemma:Wd1} shows that the set $\mathcal U_d^{(1)} (\K)$ is $\K$--dilation free. 

From Theorem \ref{486} we deduce that the set $\mathcal U_d^{(1)} (\K)$ is $\K$--homography--free.

Consider the second item of Corollary \ref{Corollary2star}. We need to check that the set $\mathcal U_d^{(2)} (\Z)$ is $\Q$--homography--free, and Theorem \ref{486} shows that it suffices to check that it is $\Q$--dilation free. This result follows from the first part (a) of Lemma \ref{Lemma:rigid}.

To complete the proof of the second item of Corollary \ref{Corollary2star} 
it only remains to be checked that the elements of $\mathcal U^{(2)}_d (\Z)$ are $\Q$--rigid binary forms, which means that if $F $ belongs to $\mathcal U^{(2)}_d (\Z)$, then its group of $\Q$--automorphisms satisfies \eqref{299}. 

By Proposition \ref{central724}.1, 
which is also valid for $f=g$, the only affine homographies which permute $\mathcal Z (f)$ are $\Q$--homotheties. 
By the remark made in \S \ref{1033}, 
any $\Q$--automorphism $\gamma$ of $F$ is such that 
$\tilde \gamma$ is a $\Q$--homothety. So $\gamma$ has the shape \eqref{t00u}, but with $u$ and $v$ rational numbers different from zero. 
It only remains to apply part (b) of Lemma \ref{Lemma:rigid}. 

 The proofs of both items of Corollary \ref{Corollary2star} are complete.
 \end{proof}
 
%%%%%%%%%%%%%%%%%%%%%%%%%%%%%%%%%%%%%%%%%%%%%%%%%%%%%%%%%%%%%% 
 
\subsection{Not homothetic pairs of polynomials}%\label{Homandpol} 

The first alinea of Proposition \ref{central724} gives no information about the existence or not of a homothety $\mathfrak h_{q,0}$ exchanging the distinct polynomials $f$ and $g\in \mathcal P_d (\K)$.
Here we give examples (without proofs) of subsets of $\mathcal P_d (\K)$ (defined in \eqref{defPd(K)}) for which such homotheties do not exist. 

\begin{example} 
For $1\leqslant k <d$, let 
\[
\mathcal P_{d,k, k+1}(\C) :=
\{ f \in \mathcal P_d (\C) : \alpha_k = \alpha_{k+1} \neq 0
\}.
\]
Then there is no $q\in \C\smallsetminus \{0, 1\}$ and no pair $(f,g)$ of (distinct or not) elements of $ \mathcal P_{d,k, k+1}(\C)$, such that 
\[
\mathfrak h_{q,0} (f) =g.
\]
\end{example}

\begin{example}\label{nohomothetyQ} 
The second example is of arithmetical nature. Notice that since the discriminant of the elements in $\mathcal P_d (\Z) $ is not $0$, one at least of the two coefficients $a_d$, $a_{d-1}$ is not $0$.
For $2\leqslant k \leqslant d$, consider the set
\[
{\mathcal P}_{d,k,{\rm free}}(\Z):= \left\{ f \in \mathcal P_d (\Z) : a_k\neq 0 \text{ and } a_k \text{ is } k-\text {free}\right\}.
\]
Let $d\geqslant 4$. There is no $q\in \Q\smallsetminus\{0,1,-1\}$ and no distinct $f$ and $g\in {\mathcal P}_{d,k,{\rm free}}(\Z)$ such that 
\begin{equation*}
\mathfrak h_{q,0} (f) =g.
\end{equation*}
\end{example}

\begin{example} 
To eliminate the case of symmetry $\mathfrak h_{-1, 0}$, it is sufficient to consider the following subset of ${\mathcal P}_{d,k,{\rm free}}(\Z)$ defined by
 \[
 {\mathcal P}_{d,k,{\rm free}}^+(\Z):= \left\{ f \in {\mathcal P}_{d,k,{\rm free}}(\Z): \alpha_k >0 \right\} .
 \]

Here is the variant of Example \ref{nohomothetyQ}:

 Let $d\geqslant 4$, $k$ odd satisfying $2\leqslant k\leqslant d$. There is no $q \in \Q \smallsetminus \{0, 1\}$ and no distinct $f$ and $g\in {\mathcal P}_{d,k,{\rm free}}^+(\Z)$ such that
\[
\mathfrak h_{q,0} (f) =g.
\]
\end{example}
 
 %%%%%%%%%%%%%%%%%%%%%%%%%%%%%
\section{Proof of Theorem \ref{527} and Corollary \ref{Corollary2}} \label{S:Proof527}
We now write $F$ and $G$ as 
\begin{equation*}
\begin{cases}
F(X,Y) = a_0X^{d} + a_{\lambda} X^{d-\lambda} Y^{\lambda} + a_{\lambda +5} X^{d-\lambda-5} Y^{\lambda +5}+a_{\lambda +6} X^{d-\lambda -6} Y ^{\lambda +6}+ \cdots + a_{d} Y^{d}, \\
\\
G(X,Y) = b_0X^{d} + b_{\lambda'} X^{d-\lambda'} Y^{\lambda'} + b_{\lambda' +5} X^{d-\lambda'-5} Y^{\lambda' +5}+b_{\lambda' +6} X^{d-\lambda' -6} Y ^{\lambda' +6}+ \cdots + b_{d} Y^{d}, 
\end{cases}
\end{equation*}
where $\lambda =\Lambda^+ (F), $ $\lambda' = \Lambda^+ (G)$, $ a_0a_db_0b_da_{\lambda}b_{\lambda'}\neq 0$.
By analogy with the above section, we introduce the two associated polynomials (see \eqref{669}) 
\begin{equation}\label{twopolynomials}
\begin{cases}
f(t) =t^{d}+\alpha_{\lambda}t^{d-\lambda} + \alpha_{\lambda +5} t^{d-\lambda -5} +\alpha_{\lambda +6} t^{d-\lambda -6} +\cdots +\alpha_{d},\ (\alpha_{\lambda}\alpha_d\neq 0)\\
\\
 g(z) =z^{d}+\beta_{\lambda'} z^{d-\lambda'} + \beta_{\lambda' +5} z^{d-\lambda' -5} +\beta_{\lambda'+6} z^{d-\lambda'-6} +\cdots +\beta_{d}, \, (\beta_{\lambda'} \beta_d \neq 0),
 \end{cases}
\end{equation}
with $\alpha_k =a_k/a_0$ and $\beta_k = b_k/b_0$ for $k=1,\dots, d$; we also set $\alpha_0=\beta_0=1$.
So the polynomials $f$ and $g$ have (at least) four coefficients equal to zero just after the second monomial 
(the second monomial of $f$ is $\alpha_{\Lambda^+ (f)} t^{d-\Lambda^+ (f)}$, the first monomial is $t^d$)
and we have
\begin{equation}\label{1144}
\max\{\lambda, \, \lambda'\} \leqslant d-5.
\end{equation}

 The analogue of Proposition \ref{central724} is the following.
 
\begin{proposition} \label{central} Let $d\geqslant 10$. Consider in $\C [t]$ and $\C [z]$ respectively, two polynomials $f$ and $g$ (distinct or not)
as in \eqref{twopolynomials}, satisfying \eqref{1144} and the inequality
\begin{equation}\label{1151}
\lambda + \lambda ' \geqslant d.\end{equation}
Suppose that the discriminants of $f$ and $g$ do not vanish and suppose 
 the existence of a
 homography $\mathfrak h$ such that $\mathfrak h (f) = g$.
 Then either 
\begin{enumerate}
\item $\mathfrak h$ is a homothety $\mathfrak h_{q,0}$ with $q\in \C^\times$,
or
\item $\mathfrak h$ is a non affine homography of the form $\mathfrak h_{0, r, 0}$ with $r\in \C^\times$. In that situation we have 
\begin{equation}\label{1159}
\lambda +\lambda' =d
\end{equation}and $f$ and $g$
are of the form $t^d +\alpha_{\lambda} t^{d-\lambda} + \alpha_d $ and $z^d+ \beta_{\lambda'} z^{d-\lambda'}+ \beta_d$, with $\alpha_d \beta_d =r^d$, 
 and $ \alpha_\lambda r^{\lambda'}= \alpha_d \beta_{\lambda'}$.
\end{enumerate}
\end{proposition}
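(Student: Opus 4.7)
The plan is to mimic the proof of Proposition~\ref{central724}, but to exploit the four consecutive vanishings $\alpha_{\lambda+1}=\cdots=\alpha_{\lambda+4}=0$ (and likewise for $\beta$) to compensate for the weakened lower bound $\lambda+\lambda'\geqslant d$. Item~1 (the affine case) is immediate: since $d\geqslant 10$ and $\lambda,\lambda'\geqslant d/2\geqslant 5$, we have $\alpha_1=\beta_1=0$, so the sums of the roots of $f$ and $g$ both vanish, and the argument of \S\ref{firstcase} forces $r=0$.

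For item~2 with a non-affine homography $\mathfrak h=\mathfrak h_{q,r,s}$, the main obstacle is to force $q=s=0$. Lemma~\ref{356bis} applied with $e'=\lambda'-1$ gives $f^{(j)}(s)/f(s)=\bigl(d!/(d-j)!\bigr)(q/r)^{j}$ for $1\leqslant j\leqslant \lambda'-1$. The hypotheses $\lambda+\lambda'\geqslant d$ and $\lambda\leqslant d-5$ guarantee that the four values $j=d-\lambda-k$ with $k\in\{1,2,3,4\}$ all lie in this range; for such $j$ the shape of $f$ forces $f^{(j)}(t)=\frac{d!}{(\lambda+k)!}\,t^{\lambda+k}+\alpha_\lambda\frac{(d-\lambda)!}{k!}\,t^{k}$. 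Setting $c_k:=(d-\lambda)!(\lambda+k)!/(k!\,d!)$ and evaluating at $t=s$, the two expressions for $f^{(j)}(s)$ combine into
\[
s^{\lambda+k}+\alpha_\lambda c_k s^{k}=f(s)\,(q/r)^{d-\lambda-k}\qquad (k=1,2,3,4).
\]
Assuming $s\neq 0$ and writing $\rho=q/r$, taking ratios of consecutive equations and dividing by $s^{k}$ yields $s^{\lambda}(\rho s-1)=\alpha_\lambda(c_k-\rho c_{k+1}s)$, so the right-hand side is independent of $k\in\{1,2,3\}$. The degenerate subcases $\rho s=1$ and $\rho=0$ both collapse to $c_1=c_2=c_3=c_4$, which via $c_{k+1}/c_k=(\lambda+k+1)/(k+1)$ forces $\lambda=0$; in the remaining subcase, $(c_2-c_1)/(c_3-c_2)=(c_3-c_2)/(c_4-c_3)$ simplifies to $3(\lambda+3)=4(\lambda+2)$, hence $\lambda=1$. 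Both outcomes contradict $\lambda\geqslant 5$, so $s=0$. Then $f(0)=\alpha_d\neq 0$, and the identity at $j=d-\lambda-1\in[1,\lambda'-1]$ reads $0=\alpha_{\lambda+1}=\alpha_d\binom{d}{d-\lambda-1}(q/r)^{d-\lambda-1}$, forcing $q=0$.

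With $\mathfrak h=\mathfrak h_{0,r,0}$, Lemma~\ref{998}.2 gives $g(z)=z^{d}f(r/z)/\alpha_d$, so $\beta_j=\alpha_{d-j}r^{j}/\alpha_d$ and $\beta_j=0\iff\alpha_{d-j}=0$. The vanishings $\beta_1=\cdots=\beta_{\lambda'-1}=0$ translate to $\alpha_{d-\lambda'+1}=\cdots=\alpha_{d-1}=0$; combined with $\alpha_\lambda\neq 0$ this gives $\lambda\leqslant d-\lambda'$, and in view of \eqref{1151} the equality $\lambda+\lambda'=d$. The nonzero coefficients of $f$ therefore reduce to positions $0,\lambda,d$, so $f(t)=t^{d}+\alpha_\lambda t^{d-\lambda}+\alpha_d$ and symmetrically $g(z)=z^{d}+\beta_{\lambda'}z^{d-\lambda'}+\beta_d$; the relations $\alpha_d\beta_d=r^{d}$ and $\alpha_\lambda r^{\lambda'}=\alpha_d\beta_{\lambda'}$ then follow by identification of the coefficients of $z^{d}$, $z^{\lambda}$ and $z^{0}$.
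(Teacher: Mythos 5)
Your proof is correct, and it follows a genuinely different implementation of the derivative-identity strategy than the paper. The paper first invokes Lemma \ref{896} to reduce to $d\leqslant \lambda+\lambda'\leqslant d+2$, then applies \eqref{expressionofderivatives} at four consecutive orders $j\geqslant d-\lambda+1$ (where $f^{(j)}$ is a \emph{monomial}), exploiting the four vanishing coefficients $\beta_{\lambda'+1}=\cdots=\beta_{\lambda'+4}$ of $g$; this leads to the system \eqref{system3*} and the three-case determinant computation of Lemma \ref{Lemma:quotientQ} ($Q\neq1$), with the subcases $q=0$, $s\neq0$ and $q\neq0$, $s=0$ handled separately by a symmetry argument. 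You instead apply Lemma \ref{356bis} (i.e.\ only the block $\beta_1=\cdots=\beta_{\lambda'-1}=0$) at the four orders $j=d-\lambda-k$, $k=1,\dots,4$, which lie in $[1,\lambda'-1]$ precisely because $\lambda+\lambda'\geqslant d$ and $\lambda\leqslant d-5$, and where $f^{(j)}$ is a \emph{binomial} thanks to $\alpha_{\lambda+1}=\cdots=\alpha_{\lambda+4}=0$; the resulting relations $s^{\lambda}(\rho s-1)=\alpha_\lambda(c_k-\rho c_{k+1}s)$ with $c_{k+1}/c_k=(\lambda+k+1)/(k+1)$ force $\lambda\in\{0,1\}$ in every subcase, a contradiction, so $s=0$, and then $q=0$ follows at once from $\alpha_{\lambda+1}=0$. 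This buys you a proof that needs neither the preliminary reduction of Lemma \ref{896} nor Lemma \ref{Lemma:quotientQ}, absorbs the case $q=0$ into the same system, and the final computation ($3(\lambda+3)=4(\lambda+2)$) is verifiably simpler than the paper's $Q$-calculation; the endgame for $\mathfrak h_{0,r,0}$ (coefficient identification, $\lambda+\lambda'=d$, the trinomial shapes and the relations $\alpha_d\beta_d=r^d$, $\alpha_\lambda r^{\lambda'}=\alpha_d\beta_{\lambda'}$) coincides with the paper's. Two cosmetic points: the justification ``$\lambda,\lambda'\geqslant d/2$'' in your affine case is not among the hypotheses of the proposition — what \eqref{1144} and \eqref{1151} actually give is $\lambda\geqslant d-\lambda'\geqslant 5$ and $\lambda'\geqslant 5$, which is all you use (only $\lambda,\lambda'\geqslant 2$ is needed there, and $\lambda\geqslant2$ later); and you should note, as the paper does, that $f(s)\neq0$ (and $g(q)\neq 0$) is automatic from $\mathfrak h(f)=g$ with $g$ monic of degree $d$, since this is required to apply Lemmas \ref{872} and \ref{356bis}.
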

 
 %%%%%%%%%%%%%%%%%%%%%
\subsection{Heuristics again}

 Let $(q,r,s)\in\C\times \C^\times \times \C$ and $d\geqslant 3$, $\lambda,\lambda',\mu,\mu'$ positive integers. We are looking for the existence of two polynomials $f,g$ in $\mathcal P_d$ satisfying 
\[
\kappa f(t)=(t-s)^d g \left( q+\frac r {t-s}\right)
\]
for some $\kappa\in\C^\times$ 
and 
 \[
 \begin{aligned}
 &\alpha_1=\cdots=\alpha_{\lambda-1}=\alpha_{\lambda+1}=\cdots=\alpha_{\lambda+\mu-1}=0
 \\ 
 &\beta_1=\cdots=\beta_{\lambda'-1}=\beta_{\lambda'+1}=\cdots=\beta_{\lambda'+\mu'-1}=0,
 \end{aligned}
 \]
 so that
 \[
 f(t)= t^d+\alpha_{\lambda}t^{d-\lambda}+
\sum_{j=\lambda+\mu}^d \alpha_jt^{d-j}
\]
and
\[
g(z)= z^d+ \beta_{\lambda'}z^{d-\lambda'}+
\sum_{j=\lambda'+\mu'}^d \beta_jz^{d-j},
\] 
(compare with \eqref{twopolynomials}). According to the heuristic discussion of \S \ref{Heurcons}
with the two subsets 
\[
\mathcal I=\{1,\dots,\lambda-1,\lambda+1,\dots,\lambda+\mu-1\}, \quad \mathcal J=\{1,\dots,\lambda'-1,\lambda'+1,\dots,\lambda'+\mu'-1\}, 
\]
with $a=\lambda+\mu-2$ and $b=\lambda'+\mu'-2$, we may expect that, given $d\geqslant 3$, $\lambda,\lambda',\mu,\mu'$ satisfying 
 $\lambda +\lambda'+\mu+\mu'=d+4$, outside a Zariski closed set of $(q,r,s)$, there is a unique solution. We work out an example for sufficiently large $d$ with $\lambda=\mu=2$, $1\leqslant \lambda'\leqslant d-1$, $\mu'=d-\lambda'$ below (Example \ref{Example}). 
 
 When $\lambda +\lambda'+\mu+\mu'>d+4$, we may expect that there is no solution. The main result of Proposition 
\ref{central}
is that this conclusion holds under the stronger assumptions $\mu\geqslant 5$, $\mu'\geqslant 5$, $\lambda +\lambda'\geqslant d$. 

\begin{example}\label{Example} 
Take 
$q=1$, $r=1$, $s=2$, $1\leqslant \lambda'\leqslant d-1$, $\mu'=d-\lambda'$, 
\[
\gamma=\begin{pmatrix} 1&-1\\1&-2\end{pmatrix}, \quad g(z)=z^d+\beta_{\lambda'}z^{d-\lambda'}+\beta_d,
\]
\[
\kappa f(t)= (t-1)^d+\beta_{\lambda'} (t-1)^{d-\lambda'}(t-2)^{\lambda'}+\beta_d(t-2)^d.
\] Since $f$ is monic, we have
\[
\kappa=1+\beta_{\lambda'} +\beta_d.
\]
We are searching for $\beta_{\lambda'}$, $\beta_d$ and $\kappa$ such that $\alpha_1=\alpha_3 =0$, so that 
\[
f(t)=t^d+\alpha_2t^{d-2}+\alpha_4t^{d-4}+\cdots+\alpha_d. 
\]
We have 
\[
\begin{aligned}
\kappa\alpha_1&=-d+A\beta_{\lambda'}-2d\beta_d,
\\
\kappa\alpha_3&=-\binom d 3 +B\beta_{\lambda'}-8 \binom d 3 \beta_d
\end{aligned}
\]
with 
\[
\begin{aligned}
A&= -(d+\lambda'),
\\
B&=
-8\binom {\lambda'} 3 - 4\binom {\lambda'} 2(d-\lambda')-2\lambda' \binom {d-\lambda'} 2- \binom {d-\lambda'} 3.
\end{aligned}
\]
The determinant of the system of three equations in thee unknowns $\beta_{\lambda'}$, $\beta_d$ and $\kappa$
\[
\left\{
\begin{matrix}
\hfill
 \beta_{\lambda'}&+&
\hfill\beta_d&-&\kappa&=&-1\hfill
\\
\hfill
A\beta_{\lambda'} &-&\hfill
2d\beta_d&&&=&d \hfill
\\
\hfill
B\beta_{\lambda'}&-&\hfill
\displaystyle
8 \binom d 3 \beta_d&&&=&
\displaystyle \binom d 3,\hfill
\end{matrix}
\right.
\]
is 
\[
\Delta := \begin{vmatrix} 
1& 1& -1\\
A& -2d& 0\\
B& -8 \binom d3 & 0
\end{vmatrix}
=8A\binom d3 -2d B; 
\]
this is a polynomial in $d$ of degree $4$ and which is multiple of $d$. It vanishes for at most three values of $d\not=0$.
In particular for $d$ sufficiently large it
is different from zero, thus this system has unique solution $(\kappa,\beta_{\lambda'},\beta_d)$. We want to study the potential vanishing of the unknowns, as the parameter $d$ tends to infinity, when the other parameter $\lambda'$ is a fixed positive integer. 
Standard computations lead to the equality
\[
B= \frac 16 \left( -d^3-3d^2(\lambda' -1)+d (-3 \lambda'^2+12 \lambda' -2)-\lambda'(\lambda'^2 -9 \lambda +14)
\right),
\]
which is summarized as 
\[
B= - \frac {d^3} 6 -\frac {d^2}2 (\lambda'-1) +O(d).
\]
We use the formula
\[
\binom d3 = \frac{d^3}6 -\frac{d^2}2 +O(d).
\]
$\bullet$ To study the value of $\beta_{\lambda'}$, we consider the determinant
\[
\Delta=\begin{vmatrix} -1& 1 &-1\\
d &-2d& 0\\
\binom d3& -8 \binom d3 &0
\end{vmatrix} = d \binom d3 \begin{vmatrix}
1& 2\\1&8
\end{vmatrix} = 6d \binom d3= d^4 + O(d^3).
\]
So we obtain that $\beta_{\lambda'} \longrightarrow -1$ as $d\longrightarrow +\infty$.
\\
$\bullet$ To study the value of $\beta_d$, we consider the determinant 
\[
\begin{vmatrix} 1& -1 &-1\\
A &d& 0\\
B& \binom d3 &0
\end{vmatrix} = - A\binom d3 +Bd =\frac{ \lambda'}3 d^3 +O (d^2).
\]
Using the above value of $\Delta$, we deduce that, for large $d$, $\beta_d$ tends to zero, without vanishing.
\\
$\bullet$ To study the value of $\kappa$, we already know that $\kappa$ tends to zero as $d$ tends to infinity: this follows from the first equation of the system. To prove that $\kappa$ does not vanish for large values of $d$ we compute the determinant
\[
\begin{vmatrix}1 & 1 &-1 \\
A & -2d & d\\
B & -8 \binom d3 & \binom d3
\end{vmatrix} = \begin{vmatrix}
0 & 0 & -1\\
A+d & -d & d\\
B+ \binom d3 & -7 \binom d3 & \binom d3
\end{vmatrix} =-7(A+d) \binom d3 +d ( B + \binom d3), 
\]
which is finally equal to
\[
\frac {2\lambda'}3 d^3 +O(d^2).
\]
By dividing by the value of $\Delta$, we find that $\kappa$ tends to zero, without vanishing.

Using
\[
\binom d 2-4\binom{\lambda'}2 -2\lambda'(d-\lambda')-\binom {d-\lambda'}2=-\frac {\lambda'}2 (2d+\lambda'-3)
\]
we deduce that when $d$ tends to infinity, we have 
\[
\alpha_2\to -\frac {\lambda'}2 (2d+\lambda'-3)\not=0.
\]
Also from 
\[
\begin{gathered}
\binom d 4-16\binom {\lambda'}4-8\binom {\lambda'}3(d-\lambda')-4\binom b 2 \binom {d-\lambda'}2-2\lambda'\binom {d-\lambda'}3 -\binom {d-\lambda'}4 =
\\
-\frac 1{24} \lambda' (-90 + 4 d^3 + 6 d^2 (-5 + \lambda') + 83 \lambda' - 18 \lambda'^2 + \lambda'^3 + d (82 - 42 \lambda' + 4 \lambda'^2))
\end{gathered}
\]
we conclude that $\alpha_4\not=0$ for sufficiently large $d$.

This completes the claim of Example \ref{Example} that for $\lambda=\mu=2$, $1\leqslant \lambda'\leqslant d-1$ and $\mu=d-\lambda'$,
 there is an example of a pair of polynomial $f,g$ in $\mathcal P_d$ and an homography $\mathfrak h$ satisfying $\mathfrak h(f)=g$. 

\end{example}
 %%%%%%%%%%%%%%%%%%%%%
\subsection{Proof of Proposition \ref{central}} 

Assume first that $\mathfrak h$ is an affine homography $\mathfrak h_{q,r}$ with $q\in \C^\times$. We use the same argument as in the proof given in \S \ref{firstcase}. The conditions \eqref{1144} and \eqref{1151}
imply that $\lambda \geqslant 2$ and $ \lambda' \geqslant 2$, 
thus the sums of the roots of $f$ and $g$ are equal to zero. Hence $r=0$ and $\mathfrak h$ is a homothety $\mathfrak h_{q,0}$.

\vskip .2cm
\noindent Now consider the case where $\mathfrak h$ is a non affine homography: $\mathfrak h=\mathfrak h_{q,r,s}$ with $r\in \C^\times$. The proof below works by contradiction. We will show that each of the three cases $\mathfrak h_{q,r,s}$ with $q$ and $s\neq 0$, with $q= 0$ and $s\neq 0$ and finally with $q\neq 0$ and $s= 0$ are impossible.

To start with, we consider the general case $(q,r,s)\in\C\times\C^\times\times\C$. 

As a consequence of Lemma \ref{896}, we suppose that $\lambda$ and $\lambda'$ satisfy the inequalities
\begin{equation}\label{lambda+lambda}
d\leqslant \lambda + \lambda ' \leqslant d+2.
\end{equation}
 Thanks to Lemma \ref{872}, we again appeal to the equality \eqref{expressionofderivatives}. Recall that we have 
\[
\beta_0 =1,\, \beta_{\lambda'}\not=0, \; \beta_\ell=0 \text{ for } 1\leqslant \ell \leqslant \lambda' -1 \text{ and for }\lambda' +1\leqslant \ell \leqslant \lambda' +4.
\]
For 
\begin{equation}\label{dermonom}j\geqslant d-\lambda +1,
\end{equation} the derivative $f^{(j)} (t)$ is a monomial,
in particular we have the equality 
\begin{equation}\label{explicitderivative}
f^{(j)} (s) = \frac{d\, !}{(d-j)\, !}\cdot s^{d-j}.
\end{equation} 
We apply \eqref{expressionofderivatives} for four consecutive values of $j$, chosen in order that exactly two non zero $\beta_i$ are present on the RHS of these equalities. 
The corresponding indices are necessarily $i= 0$ and $i = \lambda'$. In this case \eqref{expressionofderivatives} becomes 
 \begin{equation}\label{fundbis}
 \frac{f^{(j)}(s)}{f(s)} =\frac{d\, !}{(d-j)\, !}\cdot \left( \frac {q}r\right)^j +\binom{d-\lambda'}{j-\lambda'} \cdot j\, ! \cdot 
\left( \frac qr\right)^j \cdot \frac{\beta_{\lambda'}}{q^{\lambda'}}\cdotp
\end{equation}
Hence the four values of $j$ are either 
\[
\lambda', \lambda' +1,\lambda' +2,\lambda' +3 \quad\text{or}\quad \lambda' +1,\lambda' +2,\lambda' +3, \lambda' + 4.
\]
 
%%%%%%%%%
This has to be compatible with \eqref{lambda+lambda} and \eqref{dermonom}: so we write
$j=d-\lambda+\ell$ with 
\begin{equation}\label{Equation:FourValues}
\text{ $\ell= \ell_0+i$, $(i=0,1,2,3)$ where } 
 \begin{cases}
 \displaystyle{
\ell_0 = 1 } & \hbox{ if $\lambda + \lambda' =d$ and if $\lambda + \lambda' =d+1$,}
 \\
 \displaystyle{
\ell_0 = 2}& \hbox{ if $\lambda + \lambda' =d+2$.}
\end{cases}
 \end{equation}
 
 We apply the formulas \eqref{fundbis}
with the four values of $\ell$ given in \eqref{Equation:FourValues}. 

\begin{equation}\label{system4*}
\frac{f^{(d-\lambda +\ell)}(s)}{f(s)} =\frac{d\, !}{(\lambda -\ell)\, !}\cdot \left( \frac {q}r\right)^{d-\lambda +\ell} +\binom{d-\lambda'}{d-\lambda-\lambda' +\ell} \cdot (d-\lambda +\ell)\, ! \cdot 
\left( \frac qr\right)^{d-\lambda +\ell} \cdot \frac{\beta_{\lambda'}}{q^{\lambda'}}\cdotp
\end{equation}
By \eqref{explicitderivative} 
the four equations of the system \eqref{system4*} 
become 
\begin{equation}\label{system3} 
s^{d} = \left( \left(\frac{qs}r\right)^{d-\lambda+\ell}+ \frac{(d-\lambda')\, !\cdot (d-\lambda+\ell)\, !}{(d-\lambda-\lambda'+\ell)\, !\cdot d\, !}\cdot
 \left( \frac {qs}r\right)^{d-\lambda +\ell} \cdot \frac{\beta_{\lambda'}}{q^{\lambda'}}\right) f(s).
\end{equation}

 \vskip .3cm 
\noindent
$\bullet$ {\bf We now prove that the case $q\neq 0$ and $s\neq 0$ is impossible.} 

To shorten notations, set $\nu=\lambda+\lambda'-d$, so that $\nu\in\{0,1,2\}$, and 
write $\tau =qs/r$, $\kappa = \beta_{\lambda'} / q^{\lambda'}$ (since $q\neq 0$), and 
 \begin{equation*}
 A_\ell : = \frac{(d-\lambda+\ell)\, !}{(\ell-\nu)\, !} \cdot \frac{(d-\lambda')\, !}{d\, !}, 
 \end{equation*}
 for the four values of $\ell$ given in \eqref{Equation:FourValues}. So \eqref{system3} 
 becomes 
 \begin{equation}\label{system3*}
s^{d} =\tau^{d-\lambda+\ell} \left( 1+ \kappa A_\ell \right) f(s).
\end{equation}

We now exploit the fact that $s\neq 0$.
We notice that \eqref{system3*} implies $\tau\not=0$ and $f(s)\not=0$, and also $1+\kappa A_\ell\neq 0$ for the four values of $\ell$ given in \eqref{Equation:FourValues}. 
We eliminate the variable $s^{d}$ among the four equations \eqref{system3*} and we obtain the three equalities, which are satisfied by the two unknowns 
 $\tau$ and $\kappa $, which both are $\neq 0$.
\[
\begin{cases}
\displaystyle{\tau = \frac{1+\kappa A_{\ell_0}}{1+\kappa A_{\ell_0+1}}}, \\
\\
\displaystyle{\tau = \frac{1+\kappa A_{\ell_0+1}}{1+\kappa A_{\ell_0+2}}},\\
\\
\displaystyle{\tau = \frac{1+\kappa A_{\ell_0+2}}{1+\kappa A_{\ell_0+3}}}\cdotp
\end{cases}
\]
We now write necessary and sufficient conditions to ensure that the two first equations are compatible and that the two last ones are compatible. We obtain the following system of two equations 
\[
\begin{cases}
(1+\kappa A_{\ell_0}) (1+\kappa A_{\ell_0+2}) = (1+ \kappa A_{\ell_0+1})^2\\
(1+\kappa A_{\ell_0+1}) (1+\kappa A_{\ell_0+3}) = (1+ \kappa A_{\ell_0+2})^2\\
\end{cases}
\]
which (since $\kappa\not=0$) is equivalent to the system 
\begin{equation}\label{Equation:system}
\left\{
\begin{aligned}
\bigl( A_{\ell_0+1}^2-A_{\ell_0}A_{\ell_0+2}\bigr)
\kappa 
&
= A_{\ell_0}+A_{\ell_0+2}-2A_{\ell_0+1},
\\
\bigl(A_{\ell_0+2}^2-A_{\ell_0+1}A_{\ell_0+3}\bigr)
\kappa 
&
= A_{\ell_0+1}+A_{\ell_0+3}-2A_{\ell_0+2}.
\end{aligned}
\right.
\end{equation}

If the system \eqref{Equation:system} has a solution then we have
\begin{equation}\label{Equation:569}
(A_{\ell_0+1}^2 -A_{\ell_0} A_{\ell_0+2})(A_{\ell_0 +1} +A_{\ell_0 +3} -2 A_{\ell_0 +2}) = (A_{\ell_0+2}^2 -A_{\ell_0+1} A_{\ell_0+3}) 
(A_{\ell_0 } +A_{\ell_0 +2} -2 A_{\ell_0 +1}). 
\end{equation}
 We factorize each $A_\ell$ as 
\[
A_\ell = (d-\lambda +1)\, ! \cdot \frac{(d-\lambda')\, !}{d\, !}A_\ell^*,
\]
with 
\begin{equation}\label{Equation:6.14}
A_\ell^* :=\prod_{i=2}^\ell \frac{\lambda' +i-\nu}{(i-\nu)^+},
\end{equation}
with the convention $x^+ = \max (x,\, 1)$.

By homogeneity, the equality \eqref{Equation:569} is equivalent to 
\begin{equation}\label{Equation:581}
(A_{\ell_0+1}^{*2} -A_{\ell_0}^* A_{\ell_0+2}^*)(A_{\ell_0 +1}^* +A_{\ell_0 +3}^* -2 A_{\ell_0 +2}^*) = (A_{\ell_0+2}^{*2} -A_{\ell_0+1} ^*A_{\ell_0+3}^*) 
(A_{\ell_0 }^* +A_{\ell_0 +2}^* -2 A_{\ell_0 +1}^*). 
\end{equation}
Actually, the equality \eqref{Equation:581} cannot hold. This is the purpose of the following lemma.

\begin{lemma} \label{Lemma:quotientQ} 
 Let $d$, $\lambda$ and $\lambda'$ be positive integers such that $d\leqslant \lambda +\lambda' \leqslant d+2$ and $\ell_0\in \{1,2\}$. Let $A_\ell^*$ $(1\leqslant \ell\leqslant 5)$ be defined by \eqref{Equation:6.14} with $\nu=\lambda+\lambda'-d$. Define
\begin{equation}\label{Equation:617}
Q:=\frac{
(A^*_{\ell_0}+A^*_{\ell_0+2}-2A^*_{\ell_0+1}) ( A_{\ell_0+2}^{*2}-A^*_{\ell_0+1}A^*_{\ell_0+3})}
{(A^*_{\ell_0+1}+A^*_{\ell_0+3}-2A^*_{\ell_0+2})( A_{\ell_0+1}^{*2}-A^*_{\ell_0}A^*_{\ell_0+2})}\cdotp 
\end{equation}
Then have 
\[ 
Q =
 \begin{cases} 
\displaystyle{ \frac{\lambda' +3}{3(\lambda' +2)}}&\text{ if } \lambda +\lambda' =d,\\
\\
 \displaystyle{\frac{ \lambda' +2}{2(\lambda' +1)}}&\text{ if } \lambda +\lambda' =d+1,\\
 \\
\displaystyle{\frac 12(\lambda'+2)} &\text{ if } \lambda +\lambda' =d+2.
 \end{cases}
\]
In these three cases, we have $Q \neq 1$.
\end{lemma}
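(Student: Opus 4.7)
The plan is a direct, case-by-case computation of the rational function $Q(\lambda')$ in the three cases $\nu\in\{0,1,2\}$, where $\nu:=\lambda+\lambda'-d$. First I would unpack \eqref{Equation:6.14}: the consecutive ratio $A^*_{\ell+1}/A^*_\ell=(\lambda'+\ell+1-\nu)/(\ell+1-\nu)^+$ shows that each $A^*_\ell$ is a product of linear factors of the form $(\lambda'+c)$ for small integer shifts $c$. Writing $B_j:=A^*_{\ell_0+j}$ for $j=0,1,2,3$, the definition \eqref{Equation:617} becomes
\[
Q = \frac{(B_0+B_2-2B_1)(B_2^2-B_1B_3)}{(B_1+B_3-2B_2)(B_1^2-B_0B_2)}.
\]
Both the numerator and the denominator are homogeneous of total degree three in $(B_0,\ldots,B_3)$, so $Q$ is invariant under any simultaneous rescaling of the $B_j$. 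In particular, the sequence $(B_j)$ in the case $\nu=2$ is just $\lambda'$ times the corresponding sequence in the case $\nu=1$, so these two cases yield the same value of $Q$; the computation therefore reduces to two essentially distinct cases.

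In each of these cases I would tabulate $B_0,\ldots,B_3$ as explicit polynomials in $\lambda'$ --- for $\nu=1$ they are precisely the binomial coefficients $\binom{\lambda'+j}{j}$, and for $\nu=0$ they are a closely related shifted variant. The four bracketed quantities split into two finite differences $B_0+B_2-2B_1$, $B_1+B_3-2B_2$ and two Cassini-type expressions $B_1^2-B_0B_2$, $B_2^2-B_1B_3$. In each, elementary manipulations with consecutive products reveal common factors of the form $(\lambda'-a)(\lambda'-a-1)$ together with additional linear factors $(\lambda'+c)$; after cancellation of these in the ratio, $Q$ collapses to the simple rational expression in $\lambda'$ claimed in the lemma. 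The inequality $Q\neq 1$ is then immediate, as in each case the equation $Q=1$ forces $\lambda'$ to take a value that is either non-positive or non-integer, contradicting the standing hypothesis that $\lambda'$ is a positive integer.

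The main obstacle is purely bookkeeping: carefully tracking the linear factors appearing in each of the four brackets and identifying the cancellations in the ratio. No conceptual difficulty remains once the explicit formulas for $B_j$ have been tabulated; the proof is a finite algebraic calculation.
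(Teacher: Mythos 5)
Your two structural observations are correct: the numerator and denominator of $Q$ in \eqref{Equation:617} are each homogeneous of degree three in the quadruple $(A^*_{\ell_0},\dots,A^*_{\ell_0+3})$, and the quadruple for $\nu=2$, namely $\bigl(\lambda',\ \lambda'(\lambda'+1),\ \tfrac12\lambda'(\lambda'+1)(\lambda'+2),\ \tfrac16\lambda'(\lambda'+1)(\lambda'+2)(\lambda'+3)\bigr)$, is exactly $\lambda'$ times the quadruple for $\nu=1$, namely $\bigl(1,\ \lambda'+1,\ \tfrac12(\lambda'+1)(\lambda'+2),\ \tfrac16(\lambda'+1)(\lambda'+2)(\lambda'+3)\bigr)$. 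But you cannot then also conclude that the computation ``collapses to the expression claimed in the lemma'': the lemma assigns \emph{different} values to these two cases, $\frac{\lambda'+2}{2(\lambda'+1)}$ versus $\frac12(\lambda'+2)$, whereas your scaling argument forces them to be equal. So the proposal is internally inconsistent as written, and its final step cannot be carried out as announced.

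What happens when the bookkeeping is actually done is that your reduction is vindicated and the displayed values in the statement are the ones at fault for $\lambda+\lambda'\in\{d,d+1\}$. For $\nu\in\{0,1\}$ one finds $A_{3}^{*2}-A^*_{2}A^*_{4}=\frac{\lambda'(\lambda'+2-\nu)^2(\lambda'+3-\nu)}{(2-\nu)^2(3-\nu)^2(4-\nu)}$ (note the square, which the paper's intermediate identity omits), and hence $Q=\frac{\lambda'+3-\nu}{3-\nu}$, i.e.\ $Q=\frac{\lambda'+3}{3}$ when $\lambda+\lambda'=d$ and $Q=\frac{\lambda'+2}{2}$ when $\lambda+\lambda'=d+1$, consistent with your proportionality remark and with the (correct) value $\frac12(\lambda'+2)$ for $\lambda+\lambda'=d+2$. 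A numerical check confirms this: for $\nu=1$, $\lambda'=2$ the quadruple is $(1,3,6,10)$ and $Q=\frac{(1+6-6)(36-30)}{(3+10-12)(9-6)}=2$, not $\frac23$. The essential conclusion $Q\neq 1$, which is all that the proof of Proposition \ref{central} uses, survives in all three cases (for $\lambda'\geqslant 2$ one has $Q\geqslant \frac43$). So if you execute your plan, you should state the corrected values and deduce $Q\neq1$ from them, rather than aim at the table in Lemma \ref{Lemma:quotientQ}; note also that the paper's own proof treats the three cases by separate direct computation and does not exploit the proportionality you observed, which is precisely what exposes the misprint.
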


\begin{proof}[Proof of Lemma \ref{Lemma:quotientQ}] We first suppose that $\lambda +\lambda' =d+2$, that is $\ell_0 =2$ and $\nu=2$. We have
the equalities
\[
A_1^*=1,\ 
A_2^* = \lambda', \ 
A_3^* = \lambda' (\lambda'+1), \ A_4^* = \frac{\lambda'(\lambda'+1) (\lambda'+2)}2, \ A_5^*= \frac{\lambda' (\lambda'+1) (\lambda'+2) (\lambda' +3)}6 
\cdotp
\] 
With these values, we obtain the equalities 
\[
A_2^* +A_4^* -2 A_3^* =\frac 12 \cdot \lambda'^2 (\lambda'-1),
\]
\[
 A_4^{*2}-A^*_3A^*_5= \frac 1{12} \, \lambda'^3 (\lambda'+1)^2 (\lambda '+2),
 \]
 \[
 A^*_3+A^*_5-2A^*_4=\frac 16 \lambda'^2 (\lambda'-1)(\lambda'+1)
 \]
 and
 \[
 {A^*_3}^2-A^*_2A^*_4=\frac 12 \lambda'^3 (\lambda'+1).
 \] 
 By \eqref{Equation:617}, we finally obtain 
 \[
 Q=\frac{
(A^*_2+A^*_4-2A^*_3) ( {A^*_4}^2-A^*_3A^*_5)}
{(A^*_3+A^*_5-2A^*_4)( {A^*_3}^2-A^*_2A^*_4)}
= \frac 12 (\lambda' +2).
 \]
This completes the proof of
Lemma \ref{Lemma:quotientQ} in that case $\lambda +\lambda '= d+2$.

We now suppose that $d\leqslant \lambda +\lambda'\leqslant d+1$, that is $\ell_0=1$ and $\nu\in\{0,1\}$. In that case, we can forget the symbol ${}^+$ in the definition \eqref{Equation:6.14}. 
We have the equalities
\[
\begin{gathered}
A_1^* = 1, \
A_2^* = \frac{\lambda'+2-\nu}{2-\nu}, \ 
A_3^* = \frac{(\lambda'+2-\nu)(\lambda'+3-\nu)}{(2-\nu)(3-\nu)}, \ 
\\
A_4^* = \frac{(\lambda'+2-\nu)(\lambda'+3-\nu)(\lambda'+4-\nu)}{(2-\nu)(3-\nu)(4-\nu)}\cdotp
\end{gathered}
\]
With these values, we obtain the equalities 
\[
A_1^* +A_3^* -2 A_2^* =\frac {\lambda'(\lambda'-1)}{(2-\nu)(3-\nu)},
\]
\[
 {A^*_3}^2-A^*_2A^*_4=\frac {\lambda'(\lambda'+2-\nu)(\lambda'+3-\nu)}{(2-\nu)^2(3-\nu)^2(4-\nu)},
 \]
 \[
 A^*_2+A^*_4-2A^*_3= \frac {\lambda'(\lambda'-1)(\lambda'+2-\nu)}{(2-\nu)(3-\nu)(4-\nu)}
 \]
 and 
\[
 A_2^{*2}-A^*_1A^*_3= \frac {\lambda' (\lambda'+2-\nu)}{(2-\nu)^2(3-\nu)}\cdotp
 \]
 By \eqref{Equation:617}, we finally obtain 
 \[
 Q=\frac{
(A^*_1+A^*_3-2A^*_2) ( {A^*_3}^2-A^*_2A^*_4)}
{(A^*_2+A^*_4-2A^*_3)( {A^*_2}^2-A^*_1A^*_3)}
= \frac{\lambda' +3-\nu}{(3-\nu)(\lambda'+2-\nu)}\cdotp
 \]
 Lemma \ref{Lemma:quotientQ} is proved also in that case.
 \end{proof}
 
 Thanks to Lemma \ref{Lemma:quotientQ}, this completes the proof that in Proposition \ref{central} the case $q\neq 0$ and $s\neq 0$ is impossible. 
 
 We continue the proof of Proposition \ref{central} in the other cases. 
 
 \vskip .3cm 
\noindent
$\bullet$ 
 {\bf We now prove that the case $q=0$ and $s\neq 0$ is impossible. }
 
Suppose that $q=0$. The second equality of \eqref{twopolynomials} implies that $\beta_{\lambda'+1} =0$. The equality \eqref{whenq=0} 
gives the vanishing of the derivative 
\begin{equation}\label{2757}f^{(\lambda'+1)} (s) =0.
\end{equation} 
By \eqref{lambda+lambda} we know that 
\[
d-\lambda +1 \leqslant \lambda'+1< d,
\]
where the last inequality comes from the assumption \eqref{1144}. This means that the derivative $f^{(\lambda'+1)} (t)$ is a monomial in $t$ with degree $\geqslant 1$. 
Combining with \eqref{2757} we obtain that $s=0$. Contradiction. 
 \vskip .3cm 
\noindent
$\bullet$ 
 {\bf We now prove that the case $q\neq 0$ and $s= 0$ is impossible. }
 
We benefit from the symmetry of the question to consider the homography $\mathfrak h_{s,r,q}$ which transforms $g$ to $f$. We also take into account the symmetry of the assumptions \eqref{1144} and \eqref{lambda+lambda} concerning these two polynomials. By the above alinea, we deduce that the case $q\neq 0$ and $s=0$ is also impossible.
 
\vskip .3cm 
\noindent
$\bullet$ {\bf The remaining case is $q=s=0$.} 
 
 We have $\mathfrak h_{0,r,0} (t) = r/t$. By Lemma \ref{998}
we deduce 
\[
[\mathfrak h_{0,r,0} (f)] (z) =\frac 1{f(0)} \cdot z^d f\left( \frac rz\right)
= \frac 1{\alpha_d}\cdot z^d f\left( \frac rz\right).
\] 
This relation gives the list of equalities
\[
\alpha_d \beta_j = \alpha_{d-j} r^{j}\ (0\leqslant j\leqslant d)
\]
after identification of the coefficients. In this relation, we fix $j= d-\lambda$. Since $\alpha_\lambda \neq 0$, we deduce that
$\beta_{d-\lambda} \neq 0$, which implies $d-\lambda \geqslant \lambda'$. This condition is compatible with 
\eqref{lambda+lambda} 
if and only if \eqref{1159} holds. By the definition of $\lambda$ and $\lambda'$ we deduce that $f$ and $g$ are both the sum of two or three monomials as indicated.
 
Recalling that $\alpha_0 = \beta_0 =1$, we obtain the conditions of the second item of Proposition 
 \ref{central}. 
This
completes the proof of this proposition.

 %%%%%%%%%%%%%%%%%%%%%
\subsection{Proof of Theorem \ref{527} and Corollary \ref{Corollary2}} \label{Proof527et2}

 \begin{proof}[Proof of Theorem \ref{527}]
 Let $F$ and $G$ be two forms in $\cal E$ and $\gamma$ an homography such that $F\circ\gamma=G$. Our goal is to prove $F=G$. 
 
 Like in \S \ref{1033}, we consider the two monic polynomials $f$ and $g$ associated with $F$ and $G$ and the homography $ \mathfrak h= \tilde \gamma$ and we apply Proposition \ref{central}. The last assumption \eqref{Equation:trinome} of Theorem \ref{527} imply that $ \mathfrak h$ is of the form $\mathfrak h_{q,0}$ for some $q\in\K^\times$. By assumption $\cal E$ is $\K$--reduced, hence $\K$--dilation free, and therefore $F=G$.
 \end{proof}

\begin{proof}[Proof of Corollary \ref{Corollary2}]
Consider the first item of Corollary \ref{Corollary2}. The assumption $a_{d-1}=1$ implies that the set $\mathcal V^{(1)}_{d} (\K)$ does not contain binomials nor trinomials of the form \eqref{Equation:trinome}. 
According to Lemma \ref{Lemma:Wd1}, the set $\mathcal V^{(1)}_{d} (\K)$ is $\K$--dilation free. 
The first item now follows from Theorem \ref{527}.

For the proof of the second item, we use Lemma \ref{Lemma:rigid} together with Theorem \ref{527} in the same way as in the proof of the second item of Corollary \ref{Corollary2star} in section \ref{proof2.2}.
\end{proof}

%%%%%%%%%%%%%%%%%%%%%%%%%%%%%%%%%%%%%%%%%%%%%%%%%%%%%%%%%%%%%% 

 \vfill
 
\hbox{ 
 \vbox{ 
 \hbox{
\'Etienne Fouvry\\
}
 \hbox{
Univ. Paris--Saclay, CNRS \\
}
 \hbox{
Laboratoire de Math\' ematiques d'Orsay \\
}
 \hbox{
91405 Orsay, France \\
}
 \hbox{
E-mail: Etienne.Fouvry@universite-paris-saclay.fr}
}
\hfill 
 \vbox{
 \hbox{Michel Waldschmidt\\
}
 \hbox{
Sorbonne Universit\' e \\
}
 \hbox{
CNRS, IMJ--PRG \\
}
 \hbox{
75005 Paris, France\\
}
 \hbox{
E-mail: michel.waldschmidt@imj-prg.fr
}
}
}

 \vfill

 \vfill

 \vfill

 \end{document}